\pgfplotsset{compat=newest}
\newcolumntype{C}[1]{>{\centering\let\newline\\\arraybackslash\hspace{0pt}}m{#1}}
\newcommand{\rrangle}{\rangle\kern-1.2ex~\rangle}
\newcommand{\llangle}{\langle\kern-1.2ex~\langle}
\newcommand{\R}{\mathbb R}
\newcommand{\bA}{\mathbf A}
\newcommand{\bC}{\mathbf C}
\newcommand{\bD}{\mathbf D}
\newcommand{\bH}{\mathbf H}
\newcommand{\bI}{\mathbf I}
\newcommand{\bP}{\mathbf P}
\newcommand{\bM}{\mathbf M}
\newcommand{\cV}{\mathcal{V}}
\newcommand{\bV}{\mathbf V}
\newcommand{\ba}{\mathbf a}
\newcommand{\bb}{\mathbf b}
\newcommand{\bg}{\mathbf g}
\newcommand{\bn}{\mathbf n}
\newcommand{\bT}{\mathbf T}
\newcommand{\bu}{\mathbf u}
\newcommand{\blf}{\mathbf f}
\newcommand{\bU}{\mathbf U}
\newcommand{\bv}{\mathbf v}
\newcommand{\bc}{\mathbf c}
\newcommand{\Gs}{\mathcal{S}}
\newcommand{\bhu}{\widehat{\mathbf u}}
\newcommand{\bw}{\mathbf w}
\newcommand{\bW}{\mathbf W}
\newcommand{\bx}{\mathbf x}
\newcommand{\by}{\mathbf y}
\newcommand{\bz}{\mathbf z}
\newcommand{\Vd}{V_{\hskip-0.0ex 1}}
\newcommand{\Hd}{V_{\hskip-0.0ex 0}}
\newcommand{\T}{\mathcal T}
\newcommand{\tr}{{\rm tr}}
\newcommand{\Div}{\mathop{\rm div}}
\newcommand{\divG}{{\mathop{\,\rm div}}_{\Gamma}}
\newcommand{\gradG}{\nabla_{\Gamma}}
\newcommand{\nablaG}{\nabla_{\Gamma}}
\newcommand{\cD}{\mathcal D}
\newcommand{\cT}{\mathcal T}
\newcommand{\cS}{\mathcal S}
\newcommand{\cW}{\mathcal W}
\newcommand{\bpsi}{\boldsymbol{\psi}}
\newcommand{\tbpsi}{\widetilde{\boldsymbol{\psi}}}
\newcommand{\bxi}{\boldsymbol{\xi}}
\newcommand{\vect}[1]{\boldsymbol{\mathbf{#1}}}
\newcommand{\pc}{\partial^{\circ}}
\newcommand{\Gn}{{\Gamma_{\hskip-0.25ex n}}}
\newcommand{\curlG}{{\mathop{\,\rm curl}}_{\Gamma}}
\newcommand*\diff{\mathop{}\!\mathrm{d}}
\newcommand{\includegraphicsw}[2][1.]{\includegraphics[width=#1\linewidth]{#2}}
\newenvironment{xtabular}[2][1]{\tabular{#2}}{\endtabular}
\DeclareMathOperator{\Curl}{curl}
\renewcommand*{\dot}[1]{%
	\accentset{\mbox{\large\bfseries .}}{#1}}
\newtheorem{remark}{Remark}[section]
\numberwithin{equation}{section}
\begin{document}

\title{Tangential Navier-Stokes equations on evolving surfaces: Analysis and simulations\thanks{This
work was  partially supported by US National Science Foundation (NSF) through grants DMS-1953535 and DMS-2011444.}}
\author{
Maxim A. Olshanskii\thanks{Department of Mathematics, University of Houston, Houston, Texas 77204 (molshan@math.uh.edu)}
\and
Arnold Reusken\thanks{Institut f\"ur Geometrie und Praktische Mathematik, RWTH-Aachen University, D-52056 Aachen,
	Germany (reusken@igpm.rwth-aachen.de)}
\and  Alexander Zhiliakov\thanks{Department of Mathematics, University of Houston, Houston, Texas 77204 (alex@math.uh.edu)}
}
\maketitle

\begin{abstract}  
The paper considers a system of equations that models a lateral flow of a Boussinesq--Scriven fluid  on a passively evolving   surface embedded in $\mathbb{R}^3$.  
For the resulting Navier-Stokes type system, posed on a smooth closed time-dependent surface, we introduce a weak formulation in terms of functional spaces
on a space-time manifold defined by the surface evolution. 
The weak formulation is shown to be well-posed  for any finite final time and without smallness conditions on data. 
We further extend an unfitted finite element  method, known as  TraceFEM, to compute solutions to the fluid system. 
Convergence of the method is demonstrated numerically. In another series of experiments we visualize lateral flows induced by smooth deformations of a material surface.  
\end{abstract}

\section{Introduction} \label{sectIntroduction}
There is extensive literature on analysis and numerical simulation of the incompressible Navier-Stokes equations,  a fundamental model of fluid mechanics. While the overwhelming majority of  papers in this field treats these equations  in Euclidean domains, there also is literature on analysis of the incompressible Navier-Stokes equations on surfaces, or more general on Riemannian manifolds. 
Building on a fundamental observation made by Arnold~\cite{arnold1966geometrie} that relates equations of incompressible fluid to  finding geodesics on the group of all volume preserving diffeomorphisms,  local existence and uniqueness results for  Navier-Stokes equations on compact oriented Riemannian  manifolds were proved in the seminal paper \cite{ebin1970groups}. 
This work has been followed by many other studies, cf.  \cite{Temam88,taylor1992analysis} and the overview in \cite{Chan2017}. Very recent activity in the field includes the work \cite{Pruss2021,simonett2021h}, in which local-in-time-well-posedness in the framework of maximal regularity is established. All these papers restrict to \emph{stationary} surfaces or manifolds.  

In recent years there has been a growing interest in fluid equations on \emph{evolving} surfaces \cite{Yavarietal_JNS_2016,Gigaetal,Jankuhn1,miura2017singular,reuther2018solving}, motivated in particular by applications to modeling of biological membranes, e.g.~\cite{saffman1975brownian,rangamani2013interaction,barrett2015numerical,torres2019modelling}. In \cite{brandner2021derivations} one finds an overview and comparison of different modeling approaches for evolving  viscous fluid layers that result in the surface Navier-Stokes equations. We are not aware of any literature presenting well-posedness analysis of this system on evolving  surfaces. Furthermore, only very few papers address numerical treatment of such equations. 
In \cite{reuther2015interplay,reuther2018aErratum} computational results are presented, based on a surface vorticity-stream function formulation of the Navier-Stokes equations. The surface motion is prescribed and the evolving SFEM of Dziuk-Elliott~\cite{Dziuk07,DEreview} is applied to the partial differential equations for the scalar vorticity and stream function unknowns. The authors of  \cite{Nitschkeetal_arXiv_2018} consider another discretization approach  that is based on the techniques developed in \cite{reuther2018solving}. These papers focus on modeling and illustration of certain interesting flow phenomena but not on the performance of the numerical methods.  
Several recent papers~\cite{jankuhn2020error,bonito2020divergence,hansbo2016analysis,Jankuhn2020} present error analysis of finite element discretization methods for vector valued PDEs on \emph{stationary} surfaces.
We are not aware of any paper with a systematic numerical study or an error analysis of a discretization method for {vector} valued PDEs on {evolving} surfaces. 
We conclude that in the field of incompressible Navier-Stokes equations on \emph{time-dependent} surfaces basic problems related to well-posedness of the systems, development and analysis of numerical methods remain open.  This paper addresses two of these problems: well-posedness and numerical method development. 

It is shown in  \cite{brandner2021derivations} that several different modeling approaches all yield the same \emph{tangential} surface Navier-Stokes equations (TSNSE). These equations govern the evolution of tangential velocity and surface pressure if  the normal velocity of the surface is {prescribed}. The main topic of the present paper is the analysis of a variational formulation of the TSNSE. In particular, a  well-posedness result for this formulation is proved. To the best of our knowledge,  this is the first well-posedness result for evolving surface Navier-Stokes equations. The paper also touches on the development of a new discretization method for the TSNSE. This method combines an implicit time stepping scheme with a TraceFEM~\cite{ORG09,olshanskii2016trace} for discretization in space. We explain this method, validate its optimal second order convergence for a test problem with a known solution and apply it to the simulation of a lateral flow induced by deformations of a sphere.  Error analysis of this method is not addressed in this paper and left for future research.

The remainder of the paper is organized as follows. In section~\ref{sectioncont} we recall the surface Navier-Stokes equations known from the literature. In particular, the \emph{tangential} surface Navier-Stokes equations are described.  
Appropriate function spaces for a variational formulation of the TSNSE are introduced in section~\ref{sectPrelim}. Relevant properties of these spaces are derived. The main results of this paper are given in section~\ref{sec4}. We introduce and analyze two variational formulations of the TSNE: The first one is for the tangential velocity only, which is solenoidal by construction of the solution space. Then we introduce the pressure and study a mixed variational problem. For both formulations well-posedness results are derived. In section~\ref{s:discretization} we explain a discretization method. Finally, section~\ref{s_num} collects and discusses results of numerical experiments.

\section{Surface Navier--Stokes equations} \label{sectioncont}
We first introduce necessary notations of surface quantities and tangential differential operators. 
For a closed smooth  surface $\Gamma$ embedded in $\mathbb{R}^3$,
the outward pointing normal vector  is denoted by $\bn$, and  the normal projector on the tangential space at $\bx\in\Gamma$ is $\bP=\bP(\bx)=\bI- \bn\bn^T$. 
Let $\bH=\nabla_\Gamma\bn \in\mathbb{R}^{3\times3}$ be the  Weingarten mapping (second fundamental form) and $\kappa:=\tr(\bH)$ twice the mean curvature.
For  a scalar function $p:\, \Gamma \to \mathbb{R}$ or a vector field $\bu:\, \Gamma \to \mathbb{R}^3$   their smooth extensions to a neighborhood $\mathcal{O}(\Gamma)$  of $\Gamma$ are denoted by $p^e$ and $\bu^e$, respectively. Surface gradients and covariant derivatives on $\Gamma$ can be defined through derivatives in $\mathbb{R}^3$   as $\nablaG p=\bP\nabla p^e$, $D_\Gamma \bu\coloneqq \nabla \bu^e \bP$, and  $\nabla_\Gamma \bu\coloneqq \bP \nabla \bu^e \bP$. These definitions  are  independent of a particular smooth extension of $p$ and $\bu$ off $\Gamma$.
The surface rate-of-strain tensor \cite{GurtinMurdoch75} is given by
 $E_s(\bu)\coloneqq\frac12(\nabla_\Gamma \bu + \nabla_\Gamma \bu^T)$,
the surface divergence and $\Curl_\Gamma$ operators for a vector field $\bu: \Gamma \to \R^3$ are  $\divG \bu  \coloneqq \tr (\gradG \bu)$ and $\Curl_\Gamma \bu  \coloneqq (\gradG\times \bu)\cdot\bn$. For a tensor field $\bA=[\ba_1,\ba_2,\ba_3]: \Gamma \to \mathbb{R}^{3\times 3}$ ,  $\divG \bA$  is defined row-wise and $D_\Gamma\bA$ is a third-order tensor such that $\big(D_\Gamma\bA\big)_{i,j,k}= \big(D_\Gamma\ba_j\big)_{i,k}$. 

We now let $\Gamma(t)$ be a \textit{material} surface embedded in $\mathbb{R}^3$ as defined in \cite{GurtinMurdoch75,MurdochCohen79}, with a density distribution $\rho(t,\bx)$.  By $\bu(t,\bx)$, $\bx \in \Gamma(t)$,  we denote a velocity field  of the density flow on $\Gamma$, i.e. $\bu(t,\bx)$ is the velocity of a material point  $\bx\in\Gamma(t)$.  The derivative $\dot f$ of a surface quantity $f$ along the corresponding trajectories of material points is called the material derivative.
Assuming the surface evolution is such that the space-time manifold 
\[
\Gs=\bigcup\limits_{t\in[0,T]}\{t\} \times \Gamma(t)\subset\mathbb{R}^4
\] is smooth, the material derivative can be defined as
\begin{equation} \label{matder}
\dot f = \frac{\partial f^e}{\partial t}+ (\bu\cdot\nabla) f^e\quad\text{on}~\Gs,
\end{equation}
where $f^e$ is a smooth extension  of  $f:\,\Gs\to\mathbb{R}$  into a spatial neighborhood of $\Gs$.
Note that $\dot f$ is a tangential derivative for $\Gs$, and hence it depends only on the surface values of $f$ on $\Gs$. For a vector field $\bv$ on $\Gs$, one defines $\dot\bv$ componentwise.

The conservation of mass and linear momentum for a thin material layer represented by $\Gamma(t)$ together with the Boussinesq--Scriven constitutive relation for the surface stress tensor and an inextensibility condition  leads to the \emph{surface Navier-Stokes equations}: 
\begin{equation} \label{momentum}
	\left\{
	\begin{split}
		\rho \dot \bu & =- \nabla_\Gamma \pi + 2\mu \divG (E_s(\bu))  + \bb +  \pi \kappa\bn \\
		\divG \bu  & =0\\
		\dot{\rho} & =0
	\end{split}\right.\qquad\text{on}~\Gamma(t),
\end{equation}
where $\pi$ is the surface fluid pressure and $\mu$ stands for the viscosity coefficient.
Equations \eqref{momentum} model the evolution of an inextensible viscous fluidic material surface with acting area force $\bb$, cf. \cite{Gigaetal,Jankuhn1} for  derivations of this model and  \cite{brandner2021derivations} for a literature overview and alternative forms of this system.
The pure geometrical evolution of $\Gamma(t)$  is defined by its normal velocity $V_\Gamma=V_\Gamma(t)$ that is given by
the normal component of the material velocity,
\begin{equation} \label{Gamma_evol}
	V_\Gamma=\bu\cdot \bn \quad\text{on}~\Gamma(t).
\end{equation}
If $\bb$ is given or defined through other unknowns, then \eqref{momentum}--\eqref{Gamma_evol} form a closed system of six equations for six unknowns $\bu$, $\pi$, $\rho$, and $V_\Gamma$, subject to suitable initial conditions.

\subsection{Tangential surface Navier-Stokes equations} 
We now introduce a major simplification by assuming that the \emph{geometric} evolution of $\Gamma$ is known. We make this more precise below and derive equations governing the unknown lateral motions of the surface fluid.
To this end, consider a smooth velocity field $\bw=\bw(t,\bx)$ in $[0,T]\times\Bbb{R}^3$ that passively advects the embedded surface $\Gamma(t)$  given by
\begin{equation} \label{defevolving}
	\Gamma(t)=\{\by\in\R^3~|~\by=\bx(t,\bz),~\bz \in \Gamma_0 \},
\end{equation} 
where the trajectories $\bx(t,\bz)$ are the unique solutions of the Cauchy problem
\begin{align}
	\begin{cases}\label{def_flow_map}
		\bx(0,\bz)=\bz\\
		\frac{d}{dt} \bx(t,\bz) = \bw(t, \bx(t,\bz)),
	\end{cases}
\end{align}
for all $\bz$ on an initial smooth connected surface  $\Gamma_0=\Gamma(0)$ embedded in $\mathbb{R}^3$.
We now assume that the normal material motion of $\Gamma$ is completely determined by the ambient flow $\bw$ and the lateral material motion is free, i.e., for the \emph{given} $\bw$ the relation
\begin{equation}\label{wu}
\bu\cdot\bn=\bw\cdot\bn\quad\text{on}~\Gamma(t)
\end{equation}
holds for the normal component\footnote{For velocity fields $\bv \in \Bbb{R}^3$ defined on $\Gamma(t)$  we use a splitting  into tangential and normal components $\bv=\bv_T+\bv_N=\bv_T+v_N\bn$, with $v_N=\bv\cdot \bn$.} $u_N=\bu \cdot \bn$, while the tangential component $\bu_T$ of the surface fluid flow is unknown and depends on $\bw$ only implicitly through the variation of $\Gamma(t)$ and conservation laws represented by equations \eqref{momentum}. The resulting system can be seen as an idealized model for the motion of a fluid layer embedded in  bulk fluid, where one neglects friction forces between the surface and bulk fluids as well as any effect of the layer on the bulk flow. In such a physical setting, \eqref{wu} means non-penetration of the bulk fluid through the material layer.

Material trajectories of points on the surface are defined by the flow field $\bu$, rather than $\bw$.  We are also interested in a derivative determined by the variation of a quantity along the so-called normal trajectories defined below. 
\begin{definition}\label{defN} \rm
Let  $\Phi_t^n:\, \Gamma_0 \to \Gamma(t)$, be the flow map of the pure geometric (normal) evolution of the surface, i.e.,  for $\bz \in \Gamma_0$, the \emph{normal trajectory} $\bx^n(t,\bz)=\Phi_t^n(\bz)$  solves 
\begin{align}
	\begin{cases}\label{def_flow_map_normal}
		\bx^n(0,\bz)=\bz,\\
		\frac{d}{dt} \bx^n(t,\bz)= \bw_N(t,\bx^n(\bz,t)).
	\end{cases}
\end{align}
Eq. \eqref{def_flow_map_normal} defines a bijection between $\Gamma_0$ and $\Gamma(t)$ for every $t\in[0,T]$ with inverse mapping  $\Phi_{-t}^n$.
The Lagrangian derivative for the flow map $\Phi_t^n$ is denoted by $\pc$:
 \begin{equation} \label{Lagr}
  \pc \bv(t,\bx) =  \frac{d}{dt} \bv (t,\Phi_t^n(\bz)), \quad \bx=\Phi_t^n(\bz).
 \end{equation}
We call $\pc \bv$ the \emph{normal time derivative} of $\bv$.
\end{definition}

It is clear from \eqref{Lagr} that this normal time derivative is an intrinsic surface quantity. Similar to the material derivative in \eqref{matder}, it can be expressed in terms of bulk derivatives if one assumes a smooth extension of   $\bv$ from $\Gs$ to its neighborhood:  
\begin{equation}\label{eq:aux286}
\pc \bv(t,\bx)  = \frac{d}{dt} \bv (t, \bx^n(t,\bz)) 
=\Big(\frac{\partial \bv^e}{\partial t}+ (\bw_N\cdot\nabla) \bv^e \Big)(t,\bx)
\end{equation}
for $(t,\bx)\in\Gs$.
Comparing the material and normal time derivatives of a flow field $\bv$ on the surface we find the equality  
\[
  \bP \dot \bv 
         = \bP \pc \bv + (\nablaG \bv) \bu_T.
\]
 With the splitting $\bv=\bv_T+\bv_N$ we get
\begin{equation} \label{eq3}
 \bP \dot \bv= \bP \pc \bv_T+\bP \pc \bv_N + (\nablaG \bv_T) \bu_T+ (\nablaG \bv_N) \bu_T. 
\end{equation}
Noting that $\bP \bn=0$ and $\bP \pc \bn= - \nablaG w_N$ (cf. (2.16) in \cite{Jankuhn1}), we rewrite  $\bP \pc \bv_N$  as
\[
 \bP \pc \bv_N = \pc v_N \bP \bn + v_N \bP \pc \bn = v_N \bP \pc \bn= - v_N \nablaG w_N.
\]
We also have the relation $(\nablaG \bv_N) \bu_T = v_N \bH \bu_T$. Using these results and letting $\bv=\bu$ in \eqref{eq3} one obtains
\begin{equation} \label{eq4}
 \bP \dot \bu= \bP \pc \bu_T+ (\nablaG \bu_T) \bu_T+w_N \bH \bu_T -\tfrac12 \nablaG w_N^2,
\end{equation}
where we also used $u_N=w_N$. To derive an equation for the unknown tangential velocity $\bu_T$, we  apply the projection $\bP$ to the  first equation in \eqref{momentum}. For $\bP \dot \bu$ we have the result \eqref{eq4}. Note that the term   $\tfrac12 \nablaG w_N^2$ is known and can be treated as a source term. 
For a stationary surface ($w_N=0$) the normal time derivative is just the usual time derivative, $\bP \pc \bu_T = \frac{\partial \bu_T}{\partial t}$. The term $(\nablaG \bu_T) \bu_T$ is the analog of the quadratic term in Navier-Stokes equations. 
Using $\divG \bu_N= u_N \kappa$ and $u_N=w_N$, the second equation in \eqref{momentum} yields  $\divG \bu_T=-w_N \kappa$. We are not interested in variable density case and let $\rho=1$. Summarizing, from the surface Navier-Stokes equations \eqref{momentum} we get the following reduced system for $\bu_T$ and $\pi$  which we call the \emph{tangential} surface Navier-Stokes equations (TSNSE):
\begin{equation}\label{NSalt}
	\left\{\begin{aligned}
		\bP \pc\bu_T +  (\nablaG \bu_T) \bu_T+ w_N \bH \bu_T- 2\mu  \bP \divG E_s(\bu_T) +\nabla_\Gamma\pi&=\blf \\
		\divG \bu_T   &= g,\\
	\end{aligned}\right.
\end{equation} 
with right-hand sides known in terms of geometric quantities,  $w_N$ and the tangential component of the external area force $\bb$:
\begin{equation}\label{rhs}
 g = - w_N \kappa,\qquad \blf=\bb_T+ 2\mu  \bP \divG(w_N\bH) + \tfrac12\gradG w_N^2.
\end{equation} 
In the remainder of this paper we study this TSNSE. Note that these equations have a structure similar to the standard incompressible Navier-Stokes equations in Euclidean domains. Important  differences are that TSNSE is formulated on a space-time manifold that does not have an evident tensor product structure and, related to this,  a normal time derivative $\bP \pc$ instead of the standard time derivative is used and an additional term $w_N \bH \bu_T$ occurs.   
After some preliminary results in the next section,  we introduce a well-posed weak formulation of the TSNSE in section~\ref{sec4}. 
\begin{remark} \rm If one does \emph{not} assume a given normal velocity $u_N=w_N$, an equation for $u_N$ can be derived from \eqref{momentum}, cf. \cite{Jankuhn1}. The surface Navier-Stokes equations \eqref{momentum} are then rewritten as a \emph{coupled system} for $\bu_T$, $\pi$ and $u_N$, that consists of TSNSE \eqref{NSalt} and the coupled equation
\begin{equation} \label{NSnormal}
  \dot u_N  = -2\mu \big(\tr(\bH \gradG \bu_T)+u_N \tr(\bH^2)\big) + \bu_T\cdot \bH \bu_T -\bu_T\cdot \gradG u_N +\pi \kappa + b_N.
\end{equation}
 A challenging problem, not addressed in this paper, is the well-posedness of the surface Navier-Stokes equations \eqref{momentum}, i.e., of the coupled system \eqref{NSalt}--\eqref{NSnormal}. For studying this problem, results on well-posedness of only the TSNSE \eqref{NSalt} may be useful.    
\end{remark}

\section{Preliminaries} \label{sectPrelim}
In this section we introduce several function spaces and derive relevant properties of these spaces.
We will use these spaces  to formulate a well-posed weak  formulation of the TSNSE \eqref{NSalt}.  At this point, we make our assumptions on $\Gamma_0$ and its evolution more precise. We introduce the following smoothness assumptions:
\begin{equation}\label{Assump}
	\Gamma_0\in C^3\quad\text{and}\quad \bw\in C^3([0,T]\times\mathbb{R}^3,\mathbb{R}^3),~\sup_{[0,T]\times\mathbb{R}^3}|\bw|<+\infty.
\end{equation}
Then  the ODE system~\eqref{def_flow_map}  has a unique solution for any  $\bz\in\Gamma_0\subset\mathbb{R}^3$, which defines a one-to-one mapping $\Gamma_0\to\Gamma(t)$ for all $t\in[0,T]$ (Theorems~II.1.1,~V.3.1 and remark to Theorem~V.2.1 in \cite{Hartman}). Moreover, this mapping is $C^3(\cS_0,\mathbb{R}^4)$ (Corollary~V.4.1~\cite{Hartman}) with
\[\cS_0:=[0,T] \times \Gamma_0.\] 
Therefore, $\cS$ is a $C^3$ manifold as the image of $\cS_0\in C^3$ under a smooth mapping. 

We need a globally $C^2$-smooth extension of the spatial normal $\bn(t,x)$, $(t,x) \in \Gs$ that can be constructed as follows.
Let $\phi_0$ be the signed distance function to $\Gamma_0$. On a tubular neighborhood $U_\delta$ of $\Gamma_0$, with diameter $\delta > 0$ sufficiently small, we have $\phi_0 \in C^3(U_\delta)$, cf. \cite[Lemma 2.8]{DEreview}. We extend this function to be from $C^3(\mathbb{R}^3)$ and zero outside $U_{2\delta}$. Thus we have $\phi_0 \in C^3(\R^3)$ and $\phi_0$ is a signed distance function in a neighborhood of $\Gamma_0$. Let $\Phi_t$ be the flow map for the velocity field $\bw$. The mapping $(t,\bx) \to \Phi_t(\bx)$ is $C^3([0,T]\times \R^3, \R^3)$ and $\nabla_x  \Phi_t(\bx)$ is regular \cite{Hartman}. Define the level set function $\phi(t,x):=\phi_0\big(\Phi_{-t}(\bx)\big)$ and the neighborhood $\Gs^{\rm ex}:= \cup_{t \in [0,T]} \{t\} \times \Phi_t(U_\delta)$ of $\Gs$. Then we have $\phi \in C^3([0,T]\times \R^3, \R)$ and for $(t,\bx) \in \Gs^{\rm ex}$ it holds $|\nabla \phi(t,x)| \geq c >0$, and $\phi(t,x)=0$ iff $(t,x) \in \Gs$.  Set  $\hat \bn(t,\bx):= \nabla \phi(t,x)/|\nabla \phi(t,x)|$ for $(t,\bx) \in \Gs^{\rm ex}$. Clearly $\hat \bn =\bn$ on $\Gs$ and $\hat \bn \in C^2(\Gs^{\rm ex}, \R^3)$, and by a standard procedure we can extend it to $\hat \bn \in C^2([0,T] \times \R^3, \R^3)$. To simplify the notation, this extension is denoted by $\bn$.
For such an extended  vector field $\bn$ we have that $\bw_N=(\bw\cdot\bn)\bn \in  C^2([0,T]\times\mathbb{R}^3, \R^3)$ holds. Arguing in the same way as above, we conclude that for the normal flow mapping from definition~\ref{defN}
we have
\begin{equation}\label{ass:smooth}
	\Phi_{(\cdot)}^n\in C^2(\cS_0,\cS).
\end{equation}  
Note that $\cS_0 = \overline{\cS_0}$ and $\cS = \overline{\cS}$, i.e.,  $\cS_0$ and $\cS$ are closed manifolds.
\\[1ex]
 %
We need function spaces suitable for a weak formulation of the TSNSE. For this we make use of a general framework of evolving spaces presented in \cite{alphonse}.  In section~\ref{secspace1} we introduce specific evolving Hilbert spaces, based on a Piola pushforward mapping. Based on results from \cite{alphonse} several properties of these spaces are derived. In section~\ref{secspace2} an evolving space of functions for which suitable weak ``material'' derivatives exist is introduced. Here we deviate from \cite{alphonse} in the sense that this ``material'' derivative  is not based on the pushforward map but on the normal time derivative defined above.     

\subsection{Surface Piola transform} \label{secPiola}
To define evolving Hilbert spaces based on standard Bochner spaces, we need a suitable pushforward map. In the context of this paper it is natural to use a surface Piola transform as pushforward map, since this transform  conserves the solenoidal property of a tangential vector field.

To define a surface variant of the Piola transform based on the {normal} flow map $\Phi_t^n :\, \Gamma_0 \to \Gamma(t)$, we need some further notation. Below we always take $\bz \in \Gamma_0$ and $\bx:=\Phi^{n}_t(\bz) \in \Gamma(t)$. 
Since for each $t\in[0,T]$ the map $\Phi_t^n:\, \Gamma_0 \to \Gamma(t)$ is a $C^2$-diffeomorphism,  the differential  $D\Phi_t^n(\bz): \, (T\Gamma_0)_{\bz} \to T\Gamma(t)_\bx$,  is invertible.  Define  $J=J(t,\bz):=\det D\Phi_t^n(\bz)$, $J^{-1}=J^{-1}(t,\bx)= \det D\Phi_{-t}^n(\bx)= J(t,\bz)^{-1}$.
Denote by $\bD=\bD(t,\bz)$ and $\bD ^{-1}=\bD^{-1}(t,\bx)$  the matrices of linear mappings given by $D\Phi_t^n(\bz)\bP(\bz):\mathbb{R}^3\to\mathbb{R}^3$ and $D\Phi_{-t}^n(\bx) \bP(\bx)=[D\Phi_t^n(\bz)]^{-1}\bP(\bx):\mathbb{R}^3\to\mathbb{R}^3$, respectively. Note that $\bD^{-1} \bD = \bP(\bz)$ and $\bD \bD^{-1} = \bP(\bx)$ hold. For these mappings the following useful identities hold: 
\begin{equation}\label{aux551}
	\begin{split}
		D_{\Gamma}\,(\bv\circ\Phi_{-t}^n)=  \left(D_{\Gamma_0}\bv\right)\bD^{-1}\quad\text{for}~\bv\in C^1(\Gamma_0)^3,\\
		\left(D_{\Gamma}\,\bv\right)\bD=  D_{\Gamma_0}(\bv\circ\Phi_{t}^n)
		\quad \text{for}~\bv\in C^1\big(\Gamma(t)\big)^3.
	\end{split}
\end{equation}
We need the Piola transform for arbitrary, not necessarily tangential vectors. 
For this it is convenient to define an invertible operator ${\bA}(t,\bz): \Bbb{R}^3\to\Bbb{R}^3$ 
such that $\bA|_{T\Gamma_0}=J^{-1}D\Phi_t^n:\, T\Gamma_0\to T\Gamma(t)$ and $\bA:T\Gamma_0^\perp\to T\Gamma(t)^\perp$. We use the operator
\begin{equation}\label{Adef}
	{\bA}(t,\bz)\bv  :=J^{-1}(t,\bx)\bD(t,\bz)\bv+\bn_{\Gamma(t)}(\bx)\bn_{\Gamma_0}(\bz)\cdot \bv\,, \quad   \bv  \in \Bbb{R}^3.
\end{equation}
For  $	{\bA}^{-1}(t,\bx) :=J(t,\bz)\bD^{-1}(t,\bx)+ \bn_{\Gamma_0}(\bz) \bn_{\Gamma(t)}(\bx)^T$
it holds ${\bA}^{-1}(t,\bx){\bA}(t,\bz)=I_{\Bbb{R}^3}$.
The  matrices of  $\bA$ and $\bA^{-1}$ in the standard basis are also denoted by $\bA$ and $\bA^{-1}$, respectively. Note that  $\det \bA=1$ holds.  
We define the  \emph{surface Piola transform} $P_t: \Bbb{R}^3\to\Bbb{R}^3$ by
\begin{equation} \label{Piola}
	(P_t\bv)(\bx):= \bA(t,\bz)\,\bv(\bz), \quad \bz \in \Gamma_0.
\end{equation}
This operator maps tangential vectors on $\Gamma_0$ to tangential vectors on $\Gamma(t)$ and for \emph{tangential} vectors $\bv$ it satisfies $\divG P_t\bv = 0$ a.e. on $\Gamma(t)$ iff $\divG \bv =0$ a.e. on $\Gamma_0$, cf. \cite{steinmann2008boundary}. 

We need some regularity properties of $\bD$, $\bA$ and $\bD^{-1}$, $\bA^{-1}$, which are collected in the following lemma. For a function $g \in C^1({\cS}_0)$ the maximum norm is $\|g\|_{C^1(\cS_0)}:= \max_{(t,\bz) \in \cS_0}\big(|g(t,\bz)| +|\nabla_{S_0}g(t,\bz)|\big)$ and similarly for vector and matrix valued functions as well as for such functions on $\cS$. 

\begin{lemma}\label{lemS} It holds that $\bD, \bA \in C^1({\cS}_0)^{3\times3}$,  $\bD^{-1},\bA^{-1}\in C^1({\cS})^{3\times3}$ and, in particular, 
	\begin{equation}\label{smooth_est}
		\|J\|_{C^1({\cS}_0)}+\|\bD\|_{C^1({\cS}_0)}+\|\bA\|_{C^1({\cS}_0)}+\|J^{-1}\|_{C^1({\cS})}+\|\bD^{-1}\|_{C^1({\cS})}+\|\bA^{-1}\|_{C^1({\cS})}\leq C.
	\end{equation}
\end{lemma}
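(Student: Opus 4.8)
The plan is to assemble all six quantities algebraically from the normal flow map $\Phi_t^n$ and the extended normal field, each of which is already known to be sufficiently smooth, and then to read off the uniform bound from compactness of $\cS_0$ and $\cS$. The two inputs are the regularity \eqref{ass:smooth}, $\Phi_{(\cdot)}^n \in C^2(\cS_0,\cS)$, and the fact that the extended normal $\bn$ lies in $C^2([0,T]\times\R^3,\R^3)$, so that the projector $\bP = \bI - \bn\bn^T$ is $C^2$ as well; on $\Gamma_0$ the normal inherits $C^2$ regularity directly from $\Gamma_0\in C^3$.

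First I would note that the tangential differential $D\Phi_t^n$ is one spatial derivative of the jointly $C^2$ map $\Phi_t^n$, hence $C^1$ in $(t,\bz)$ on $\cS_0$ (the mixed $t$--$\bz$ partials being continuous). Since $\bD(t,\bz)$ is the matrix of $D\Phi_t^n(\bz)\bP(\bz)$, it is a product of two $C^1$ factors, so $\bD \in C^1(\cS_0)^{3\times 3}$. The surface Jacobian $J = \det D\Phi_t^n$ is a polynomial in the $C^1$ entries of the differential, hence $C^1$; because $\Phi_t^n$ is a diffeomorphism and $\cS_0$ is compact, $J$ is bounded away from zero, so $J^{-1}=1/J$ is again $C^1$ on $\cS_0$. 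The normal $\bn_{\Gamma(t)}(\bx) = \bn(t,\Phi_t^n(\bz))$ is a composition of $C^2$ maps, hence $C^1$ on $\cS_0$, while $\bn_{\Gamma_0}(\bz)$ is $C^2$. Substituting into \eqref{Adef}, $\bA = J^{-1}\bD + \bn_{\Gamma(t)}\bn_{\Gamma_0}^T$ is a $C^1$ combination of $C^1$ functions, so $\bA \in C^1(\cS_0)^{3\times 3}$.

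For the inverse quantities I would rerun the same argument on the inverse flow map $\Phi_{-t}^n:\Gamma(t)\to\Gamma_0$, which is itself a $C^2$ diffeomorphism jointly in $(t,\bx)$ on $\cS$; this follows either from the inverse function theorem applied to $\Phi_t^n$, whose differential is everywhere invertible, or from the ODE smooth-dependence results already invoked in \cite{Hartman} applied in the reversed time direction. Then $D\Phi_{-t}^n$ is $C^1$ on $\cS$, so $\bD^{-1} = D\Phi_{-t}^n\,\bP$ lies in $C^1(\cS)^{3\times 3}$; moreover $J^{-1}$, viewed on $\cS$, equals $J\bigl(t,\Phi_{-t}^n(\bx)\bigr)^{-1}$, a composition of the $C^1$ function $1/J$ on $\cS_0$ with the $C^1$ map $\bx\mapsto\Phi_{-t}^n(\bx)$, hence $C^1$ on $\cS$. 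The definition of $\bA^{-1}$ then gives $\bA^{-1}\in C^1(\cS)^{3\times 3}$ by the identical algebra.

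Finally, the estimate \eqref{smooth_est} is immediate once $C^1$ regularity is established: each summand is the maximum over a compact closed manifold ($\cS_0=\overline{\cS_0}$, $\cS=\overline{\cS}$) of a continuous expression of the form $|g|+|\nabla g|$, hence finite, and the finite sum is dominated by a single constant $C$. The only step demanding genuine care rather than bookkeeping is the joint $C^2$ regularity of the \emph{inverse} flow map on $\cS$ together with the uniform lower bound $J\ge c>0$; once these are secured via compactness and the standard ODE/inverse-function arguments, every remaining step reduces to the chain and product rules for compositions, products, determinants, and reciprocals of $C^1$ maps.
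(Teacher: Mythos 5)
Your proposal is correct and follows essentially the same route as the paper: both arguments derive $C^1$ regularity of $\bD$, $J$, $\bA$ from the $C^2$ regularity \eqref{ass:smooth} of the flow map via products, determinants and compositions, invoke the inverse mapping theorem to transfer the same reasoning to $\bD^{-1}$, $J^{-1}$, $\bA^{-1}$ on $\cS$, and conclude the uniform bound from compactness of the closed manifolds $\cS_0$ and $\cS$. The only cosmetic difference is that the paper packages the time and space derivatives into the $4\times4$ matrix of $D\Phi\,\bP_{\cS_0}$ on the space-time manifold (and reads $J^{-1}$ off as $\det D\Phi_{-t}^n$ rather than as $1/J$, so it never needs your explicit lower bound on $J$), whereas you handle the joint $(t,\bz)$ regularity directly; both are sound.
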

\begin{proof} From \eqref{ass:smooth} we know that $\Phi: (t,\bz) \to 	(t, \Phi_{t}^n(\bz))$ is in $ C^2({\cS}_0,{\cS})$ and hence $D\Phi\in C^1(T{\cS}_0,T{\cS})$. Moreover, $\bP_{\cS_0}$ is $C^1$-smooth, so  $D\Phi\bP_{\cS_0}$ is a $C^1$ smooth mapping with  matrix representation
	\begin{equation} \label{A1}
		\left[\begin{matrix}
			1 & \bw_N^T \\
			0 & \bD
		\end{matrix}\right] \in C^1({\cS}_0)^{4\times4}.
	\end{equation} 
	Hence,  $\bD\in C^1({\cS}_0)^{3\times3}$ 
	and $J\in C^1({\cS}_0)$ hold. Combining this with $\bn_{\Gamma_0},
	\bn_{\Gamma(\cdot)}\circ\Phi_{(\cdot)}^n\in C^1({\cS}_0)$,  \eqref{Adef} and the property that $\cS_0$  is closed, implies  the bound in  \eqref{smooth_est}	for $\bD$, $J$ and $\bA$.
	The mapping $\Phi:\, \cS_0 \to \cS$ is one-to-one. By the inverse mapping theorem the inverse $\Phi^{-1}$  
	is $ C^2(\Gs,{\cS}_0)$ and  for its differential  we have   $D\Phi^{-1}\in C^1(T{\cS},T{\cS}_0)$. The matrix of the $C^1$ smooth mapping $D\Phi^{-1}\bP_{\cS}$ is 
	\[
	\left[\begin{matrix}
		1 & - \bw_N^T  \bD^{-1} \\
		0 & \bD^{-1}
	\end{matrix}\right]  \in C^1({\cS})^{4\times4}.
	\]
	This and  ${\bA}^{-1}  =J\bD^{-1}	+\bn_{\Gamma_0} \bn_{\Gamma(\cdot)}^T(\Phi_{(\cdot)}^n)$ imply the desired bound for  $\bD^{-1}$,  $J^{-1}$ and $\bA^{-1}$.
\end{proof}
\smallskip

\subsection{Evolving Hilbert spaces} \label{secspace1}
 For constructing suitable evolving Hilbert spaces, we first define \emph{tangential} velocity spaces on $\Gamma(t)$.
The notation  $(\cdot,\cdot)_{0,t}$ and $\|\cdot\|_{0,t}$  is used for the canonical inner product and norm in $L^2(\Gamma(t))$.   We need the Sobolev spaces of order one  
  \[
      H^1(t):=\{\bv\in H^1(\Gamma(t))^3~|~\bv\cdot\bn=0~~\text{on}~~\Gamma(t)\},
  \]
with the inner product $(\cdot,\cdot)_{1,t}:=(\cdot,\cdot)_{0,t}+(D_\Gamma\cdot,D_\Gamma\cdot)_{0,t}$,  
 and its closed subspace of divergence free tangential fields,  
 \[
  	\Vd(t) :=\{\, \bv \in H^1(t)~|~\divG \bv=0 ~~\text{a.e. on}~~\Gamma(t)\,\}.
  \]
The space  $\Hd(t)$ is defined as  closure of a space of  smooth div-free tangential fields in the $L^2(\Gamma(t))$ norm: 
\[    
 \Hd(t):= \overline{\cV(t)}^{\|\cdot\|_{0,t}},\quad	\cV(t) := \{\, \bv \in C^1(\Gamma(t))^3~|~ \bv \cdot \bn=0, ~\divG \bv=0~~\text{on}~~\Gamma(t)\,\}. \\	
\]
The space of smooth functions $\cV(t)$ is  dense not only in $\Hd(t)$ but also in $\Vd(t)$. Indeed, for any tangential velocity field $\bu\in L^2(\Gamma(t))^3$ on the $C^2$ smooth surface $\Gamma(t)$ 
we have a  Helmholz decomposition $\bu=\nabla_\Gamma \psi + \bn\times(\nabla_\Gamma \phi) + \mathbf{h}$  with some $\psi,\phi\in H^1(\Gamma(t))$  and a harmonic field $\mathbf{h}\in C^1(\Gamma(t))^3$~\cite{reusken2020stream}. For
$\bu\in \Vd(t)$ we have $\psi=0$ and the result follows from the density of $C^2$-smooth functions in $H^1(\Gamma(t))$.
Therefore, endowed with canonical scalar products, the spaces form a  Gelfand triple $\Vd(t) \hookrightarrow \Hd(t) \hookrightarrow \Vd(t)'$. We also have that the dense embedding $\Vd(t) \hookrightarrow \Hd(t)$ is compact.  Here and later $H'$ always denotes a dual of a Hilbert space $H$, and we adopt the common notation $H^{-1}(t)$ for  $H^1(t)'$.

For the space $L^2(t):=\{\bv\in L^2(\Gamma(t))^3~|~\bv\cdot\bn=0~\text{a.e. on}~\Gamma(t)\}$  we define  a \emph{pushforward map  $\phi_t:L^2(0)\to L^2(t)$, based on  the Piola transform}, by 
\begin{equation} \label{Pio2} (\phi_{t}\bv)(\bx)=(P_t \bv)(\bx)= \bA(t,\bz) \bv(\bz), \quad \bv\in L^2(0),~ \bx=\Phi_t^n(\bz),\, \bz \in \Gamma_0. 
\end{equation}
The inverse  map (pullback) is given by  $(\phi_{-t}\bv)(\bz)= \bA^{-1}(t,\bx) \bv(\bx)$, $\bv \in L^2(t)$. 
Since  $\bA\in C^1({\cS}_0)^{3\times 3}$, the  restriction of $\phi_{t}$  to $H^1(0)$ is a pushforward map 
from $H^1(0)$ to $H^1(t)$. Because $\phi_t$ is based on the Piola transform and thus conserves the solenoidal property, we also have that  $\phi_t$  is a pushforward map from $\Hd(0)$ to $\Hd(t)$,  and from $\Vd(0)$ to $\Vd(t)$. 
For this pushforward map we have for $\bv\in H^1(0)$:
\[
\begin{split}
\|\phi_{t}\bv\|_{1,t}&= \left(\|\bA \,\bv\circ\Phi_{-t}^n\|^2_{0,t}
+ \|D_\Gamma(\bA \,\bv\circ\Phi_{-t}^n)\|^2_{0,t}
\right)^{\frac12}\\
&\le \left(\|\bA\|_{C(\Gamma(t))}+
\|D_\Gamma \bA\|_{C(\Gamma(t))}\right) \|\bv\circ\Phi_{-t}^n\|_{0,t}
+
\|\bA\|_{C(\Gamma(t))} \|D_\Gamma (\bv\circ\Phi_{-t}^n)\|_{0,t}\\
&\le \left(\|\bA\|_{C(\Gamma(t))}+
\|D_\Gamma \bA\|_{C(\Gamma(t))}\right)\|J\|_{C(\Gamma_0)}^{\frac12} \|\bv\|_{0,0}\\
&\qquad +
\|\bA\|_{C(\Gamma(t))} \|\bD^{-1}\|_{C(\Gamma(t))} \|J\|_{C(\Gamma_0)}^{\frac12} \|D_\Gamma\bv\|_{0,0}.
\end{split}
\]
The result \eqref{smooth_est} implies that the norms $\|\bA\|_{C(\Gamma(t))}$,  $\|\bD^{-1}\|_{C(\Gamma(t))}$, 
$\|D_\Gamma \bA\|_{C(\Gamma(t))}$, 
$\|J\|_{C(\Gamma_0)}$ are uniformly bounded in $t$ and thus
\[
\sup_{t\in[0,T]}\|\phi_{t}\bv\|_{1,t} \le C \|\bv\|_{1,0}
\]
holds with some $C$ independent of $\bv\in H^1(0)$. 
With  similar arguments one easily shows that $\|\phi_{-t}\bv\|_{1,0} \le C \|\bv\|_{1,t}$ holds for all $\bv\in H^1(t)$, with  $C$ independent of $\bv$ and $t$.  
These bounds  remain obviously true if $H^1(0)$, $H^1(t)$ and the corresponding norms are replaced by $\Hd(0)$, $\Hd(t)$ and the corresponding norms. Using  \eqref{smooth_est} one shows that the maps $t\to\|\phi_{t}\bv\|_{1,t}$ and $t\to\|\phi_{t}\bv\|_{0,t}$ are continuous.
These properties imply that  $\{\Hd(t),\phi_t\}_{t\in [0,T]}$, $\{H^1(t),\phi_t\}_{t\in [0,T]}$, and  $\{\Vd(t),\phi_t\}_{t\in [0,T]}$ are ``\emph{compatible pairs}'' in the sense of Definition 2.4 in \cite{alphonse}.
This compatibility structure induces natural properties of the evolving spaces defined as follows:
%
\[ \begin{split}
 L_{\Vd}^2 & := \{\, \bv:\, [0,T] \to \bigcup_{t\in [0,T]}\{t\} \times \Vd(t),\, t \to (t,\bar \bv (t))~|~\phi_{-(\cdot)}\bar \bv(\cdot) \in L^2(0,T; \Vd(0))\,\}, \\
 L_{\Vd'}^2 & := \{\, \bg:\, [0,T] \to \bigcup_{t\in [0,T]}\{t\} \times \Vd(t)',\, t \to (t,\bar \bg (t))~|~\phi_{(\cdot)}^\ast \bar \bg(\cdot) \in L^2(0,T; \Vd(0)')\,\},\\
 L^2_{\Hd} &:= \{\, \bv:\, [0,T] \to\bigcup_{t\in [0,T]} \{t\} \times \Hd(t),\, t \to (t,\bar \bv (t))~|~\phi_{-(\cdot)}\bar \bv(\cdot) \in L^2(0,T; \Hd(0))\,\},
\end{split} \]
where $\phi_{t}^\ast$ is the dual of $\phi_t$. We shall also need the spaces  $L_{\Hd}^\infty$, $L^2_{H^1}$, $L^2_{H^{-1}}$, which are defined analogously, and the spaces of smooth space-time functions 
\begin{equation} \label{defD} \begin{split}
	\cD &= \{\, \bv \in L_{\Vd}^2~|~\phi_{-(\cdot)}\bar \bv(\cdot) \in C^\infty\big([0,T]; \cV(0))\,\},
	\\
	\cD_0 &= \{\, \bv \in L_{\Vd}^2~|~\phi_{-(\cdot)} \bv(\cdot) \in C_0^\infty \big((0,T); \cV(0))\,\}.
\end{split}
\end{equation}
Note that functions in $\cD_0$ have zero traces on $\partial \Gs$.
 With a slight abuse of notation we identify $\bar \bv(t)$ with $\bv(t)= (t,\bar \bv(t))$.
 
In \cite{alphonse} it is shown  that  if $\Vd(0) \hookrightarrow \Hd(0) \hookrightarrow \Vd(0)'$ is a Gelfand triple, with a compact embedding  $\Vd(0) \hookrightarrow \Hd(0)$,  and both $\{\Hd(t),\phi_t\}_{t\in [0,T]}$ and  $\{\Vd(t),\phi_t\}_{t\in [0,T]}$ are compatible pairs, then the     $L$-spaces inherit certain  properties of the standard Bochner spaces. In particular, cf. Section 2 in \cite{alphonse}, the spaces $L_{\Vd}^2$ and $L^2_{\Hd}$
with
\[
  (\bu,\bv)_{\bf 1}  = \int_0^T (\bu(t),\bv(t))_{1,t} \, dt, \quad
  (\bu,\bv)_{\bf 0} = \int_0^T (\bu(t),\bv(t))_{0,t} \, dt,
\]
are separable Hilbert spaces, homeomorphic to $L^2(0,T;\Vd(0))$ and $L^2(0,T;\Hd(0))$ respectively. Furthermore, the embedding $ L_{\Vd}^2 \hookrightarrow L^2_{\Hd}$ is dense and compact,  the  dual space $(L_{\Vd}^2)'$  is isometrically isomorphic to $L_{\Vd'}^2$, and 
\[ L_{\Vd}^2 \hookrightarrow L^2_{\Hd}\hookrightarrow L_{\Vd'}^2
\] 
is a Gelfand triple.   
The space $\cD_0$ is dense in $L_{\Vd}^2$ and so $\cD_0$ is dense also in $L^2_{\Hd}$. By the same arguments $L_{H^1}^2$ is also a Hilbert space with inner product $(\cdot,\cdot)_{\bf 1}$. The subspace of smooth functions 
\begin{equation} \label{D0hat}
\widehat{\cD}_0 = \{\, \bv \in L_{H^1}^2~|~\phi_{-(\cdot)} \bv(\cdot) \in C_0^\infty \big((0,T); C^{1}(\Gamma_0,T\Gamma_0)\big)\,\}
\end{equation} 
is dense in $L_{H^1}^2$ and $\left(L_{H^1}^2\right)'\simeq L_{H^{-1}}^2$ holds. Note that $L_{\Vd}^2$ is a closed subspace of $L_{H^1}^2$ and that functions in ${\cD}_0$ are solenoidal and functions in $\widehat{\cD}_0$ are not necessarily solenoidal.

\subsection{Some uniform inequalities}
We need to establish several basic inequalities on $\Gamma(t)$ with constants \emph{uniformly} bounded in $t$. 

We first consider a Korn inequality. Recall that the estimate
\begin{equation} \label{Korn}
	\|\bv\|_{1,t} \leq c \Big( \|\bv\|_{0,t} + \|E_s(\bv)\|_{0,t}\Big) \quad \text{for all}~\bv \in H^1(t)
\end{equation}
holds with a constant $c=c(t)$ that depends on smoothness properties of $\Gamma(t)$, cf. \cite{Jankuhn1}.
In the next lemma we  show that the constant can be taken such that $\max_{t \in [0,T]} c(t) < \infty$ holds.
\newtheorem{Proposition}{Proposition}
\begin{lemma} \label{Propkorn} The constant $c$ in \eqref{Korn} can be chosen finite and independent of $t$.
\end{lemma}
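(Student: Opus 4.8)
The plan is to argue by contradiction and compactness, transporting the whole family of surfaces to the single fixed reference surface $\Gamma_0$ via the map $\phi_t$. The central difficulty, which dictates the shape of the argument, is that the strain tensor $E_s$ does \emph{not} transform covariantly onto $\Gamma_0$ equipped with a fixed metric: under the Piola map the antisymmetric (vorticity) part of $\nabla_\Gamma\bv$ leaks into the symmetric part of the transported gradient (symmetrization does not commute with conjugation by the non-orthogonal $\bD$), so one \emph{cannot} simply deduce the uniform bound from a single Korn inequality on $\Gamma_0$. The resolution is to keep the $t$-dependence of the transported strain operator explicit and exploit its continuity in $t$.

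First I would suppose the claim is false, so that there exist $t_n \in [0,T]$ and $\bv_n \in H^1(t_n)$ with $\|\bv_n\|_{1,t_n} = 1$ and $\|\bv_n\|_{0,t_n} + \|E_s(\bv_n)\|_{0,t_n} \to 0$. Since $[0,T]$ is compact I pass to a subsequence with $t_n \to t^\ast$. Setting $\bw_n := \phi_{-t_n}\bv_n \in H^1(0)$, the uniform norm equivalences established just before the lemma (which rest on Lemma~\ref{lemS} and the $t$-continuity of $\bA,\bD,J$) give $\|\bw_n\|_{1,0} \asymp \|\bv_n\|_{1,t_n} = 1$ and $\|\bw_n\|_{0,0} \le C\|\bv_n\|_{0,t_n} \to 0$. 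By the compact embedding $H^1(0)\hookrightarrow\hookrightarrow L^2(0)$ I extract a further subsequence with $\bw_n \rightharpoonup \bw^\ast$ in $H^1(0)$ and $\bw_n \to \bw^\ast$ in $L^2(0)$; the second limit forces $\bw^\ast = 0$, so $\|\bw_n\|_{1,0}\asymp 1$ is carried entirely by the gradient part.

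Next I would transport the strain itself. Using the identities \eqref{aux551} together with $\det\bA=1$, $\bD^{-1}\bn_{\Gamma(t)}=0$, and the curvature relation $\bn^T D_\Gamma\bv = -\bv^T\bH$ for tangential $\bv$, the pulled-back strain $E_s(\bv_n)\circ\Phi^n_{t_n}$ can be written as $F_{t_n}(\bw_n)$, where $F_t$ is a first-order operator on $\Gamma_0$ whose coefficients are algebraic expressions in $\bA,\bA^{-1},\bD,\bD^{-1},J,\bP,\bH$ evaluated along the flow. By Lemma~\ref{lemS} and \eqref{ass:smooth} these coefficients are continuous in $t$ with uniform bounds, so in particular $F_{t_n} \to F_{t^\ast}$ in $C^0$. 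The $L^2$-norm equivalence then turns $\|E_s(\bv_n)\|_{0,t_n}\to 0$ into $\|F_{t_n}(\bw_n)\|_{0,0}\to 0$, and the decomposition $F_{t^\ast}(\bw_n) = F_{t_n}(\bw_n) + (F_{t^\ast}-F_{t_n})(\bw_n)$ — whose second term is bounded by $\|F_{t^\ast}-F_{t_n}\|_{C^0}\,\|\bw_n\|_{1,0}\to 0$ — yields $F_{t^\ast}(\bw_n)\to 0$ in $L^2(0)$.

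Finally I would invoke the single-surface Korn inequality at the \emph{fixed} time $t^\ast$, which holds with the finite constant $c(t^\ast)$ from \eqref{Korn}. Transporting it through the isomorphism $\phi_{t^\ast}$ and using $\|E_s(\phi_{t^\ast}\bw_n)\|_{0,t^\ast}\asymp\|F_{t^\ast}(\bw_n)\|_{0,0}\to 0$ and $\|\phi_{t^\ast}\bw_n\|_{0,t^\ast}\asymp\|\bw_n\|_{0,0}\to 0$, I obtain $\|\bw_n\|_{1,0}\asymp\|\phi_{t^\ast}\bw_n\|_{1,t^\ast}\to 0$, contradicting $\|\bw_n\|_{1,0}\asymp 1$. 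The main obstacle is precisely the non-covariance of $E_s$ noted at the outset: it rules out a one-step reduction to a fixed Korn inequality on $\Gamma_0$ and forces the compactness argument, whose success hinges on the $C^0$-continuity in $t$ of the transported first-order operator $F_t$, i.e.\ ultimately on the $C^1(\cS_0)$ regularity supplied by Lemma~\ref{lemS}.
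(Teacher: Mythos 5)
Your argument is correct, but it takes a genuinely different route from the paper, and its motivating premise overstates the obstruction. You are right that the Piola pushforward $\phi_t$ conjugates the gradient by $\bA(\cdot)\bD^{-1}$, which does not commute with symmetrization; but this does \emph{not} rule out a one-step reduction to a fixed Korn inequality. The paper's proof performs exactly that reduction by choosing a different pullback, $\bu:=\bD^T\bv\circ\Phi^n_t$, under which the gradient is conjugated as $M\mapsto \bD^T M\bD$. Since $(\bD^T M\bD)^T=\bD^T M^T\bD$, this conjugation \emph{does} commute with symmetrization, giving the identity $E_s(\bu)=\tfrac12\big(\bP\,\mathbb{T}\bv+(\mathbb{T}\bv)^T\bP\big)+\bD^T E_s(\bv)\bD$ with $\mathbb{T}=D_\Gamma\bD^T$ a zero-order (hence $\|\bv\|_{0,t}$-controlled) perturbation; the Korn inequality at $t=0$ applied to $\bu$ then yields the uniform constant directly, with all bounds supplied by Lemma~\ref{lemS}. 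Your alternative — contradiction, compactness of $[0,T]$, and $C^0$-continuity in $t$ of the transported first-order strain operator $F_t$, reducing to the single Korn inequality at the limit time $t^\ast$ — is sound: the coefficient continuity you need does follow from the $C^1(\cS_0)$ and $C^1(\cS)$ regularity in Lemma~\ref{lemS} together with compactness of $\cS_0$, and the uniform norm equivalences for $\phi_{\pm t}$ are available from Section~\ref{secspace1}. What it buys is robustness (it never needs the strain to transform covariantly under any pullback, only continuity of the transported operator); what it loses is the explicit, constructive constant of the paper's direct argument. One small tightening: the weak-convergence/compact-embedding step ($\bw_n\rightharpoonup\bw^\ast=0$) is never used — the contradiction already follows from applying Korn at $t^\ast$ to $\phi_{t^\ast}\bw_n$ — so you can delete it.
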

\begin{proof}
	Fix any $t\in[0,T]$ and $\bv \in H^1(t)$.  Define $\bu=\bD ^{T}\bv\circ\Phi^{n}_t \in H^1(0)$. Below, for the $3$-tensor $\mathbb{T}=D_\Gamma\bD ^{T}$ and $\bv\in\mathbb{R}^3$,  $\mathbb{T}\bv\in\mathbb{R}^{3\times3}$ is the 2nd-mode tensor--vector product.  With the help of \eqref{aux551} one computes
	\begin{equation*}
		\begin{split}
			\nabla_{\Gamma_0}\bu(\bz) &= \bP(\bz)[D_{\Gamma_0}\bu(\bz)] =
			\bP(\bz)[D_{\Gamma_0}(\bD ^{T}\bv\circ\Phi^{n}_t(\bz))]\\
			&=
			\bP(\bz)\mathbb{T}\bv(\bx)
			+\bP(\bz)\bD ^{T}D_\Gamma\bv(\bx)\bD ,\\
			&=
			\bP(\bz)\mathbb{T}\bv(\bx)
			+\bD ^{T}\nabla_\Gamma\bv(\bx)\bD ,
			\quad \bx=\Phi^{n}_t(\bz).  
		\end{split}
	\end{equation*}
	The latter inequality holds since $\bD ^{T}\bP(\bx)=\bP(\bz)\bD ^{T}$. From this we find
	\begin{equation} \label{aux653}
		\begin{split}
			E_s(\bu)(\bz) &= \frac{1}{2}\left(\bP(\bz) \mathbb{T}\bv(\bx)
			+ \big(\mathbb{T}\bv(\bx)\big)^T\bP(\bz) 
			\right) 
			+ \bD ^{T}E_s(\bv)(\bx)\bD .  
		\end{split}
	\end{equation}
	With the help of $D_\Gamma \bv = D_\Gamma (\bD^{-T} \bu \circ \Phi_{-t}^n)$, \eqref{smooth_est}, \eqref{Korn} applied for $t=0$, and \eqref{aux653} we estimate 
	\[
	\begin{split}
		\|D_\Gamma\bv&\|_{0,t} = \|J^{\frac12}\left([D_\Gamma\bD ^{-T}]\bu +\bD^{-T}(D_\Gamma\bu)\bD ^{-1}\right)\|_{0,0}\\
		& \le C(\|\bu\|_{0,0}+\|D_\Gamma\bu\|_{0,0}) \le C\left(\|\bu\|_{0,0} +\|E_s(\bu)\|_{0,0}\right)\\ 
		& = C\left(\|J^{-\frac12}\bD ^{T}\bv\|_{0,t} +\left\|J^{-\frac12}\left(\tfrac{1}{2}\big(
		\bP \mathbb{T}\bv
		+ \big(\mathbb{T}\bv\big)^T\bP
		\big) 
		+ \bD ^{T}E_s(\bv)\bD \right)\right\|_{0,t}\right)\\ 
		& \le C\left(\|J^{-\frac12}\bD ^{T}\bv\|_{0,t} +\|J^{-\frac12}\bP\mathbb{T}\bv\|_{0,t} +  \|J^{-\frac12}\bD ^{T}E_s(\bv)\bD \|_{0,t}\right)\\ 
		& \le C\left(\|\bv\|_{0,t}+ \|E_s(\bv)\|_{0,t}\right).	
	\end{split}
	\]
	with some $C$ independent of $t\in[0,T]$ and $\bv$.\quad
\end{proof}
\smallskip

The following inf-sup estimate holds~\cite{Jankuhn1}:
\begin{equation} \label{infsup}
	\|\nabla_\Gamma\pi\|_{H^{-1}(t)}:= \sup_{0\neq\bv\in H^1(t)}\frac{\int_{\Gamma(t)}\pi\divG\bv\,ds}{\|\bv\|_{1,t}}  \geq c(t)\|\pi\|_{0,t},\quad \forall\,\pi\in L^2(\Gamma(t)),~\int_{\Gamma(t)}\pi=0, 
\end{equation}
with $c(t)>0$.  A uniformity result for this inf-sup constant is derived in the following lemma. 
\begin{lemma} \label{leminfsup}
	The constant $c(t)$ in \eqref{infsup} can be taken such that $\inf_{t \in [0,T]} c(t) > 0$ holds.
\end{lemma}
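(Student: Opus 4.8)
The plan is to reduce the inf-sup estimate to the solvability of the surface divergence equation with a \emph{uniformly} bounded right inverse, and then to construct such a right inverse by transporting the problem to the fixed surface $\Gamma_0$ via the surface Piola transform. By the standard Babu\v{s}ka--Brezzi theory, the bound \eqref{infsup} with constant $c(t)$ is equivalent to the following: for every $\pi\in L^2(\Gamma(t))$ with $\int_{\Gamma(t)}\pi=0$ there exists a tangential field $\bv\in H^1(t)$ with $\divG\bv=\pi$ and $\|\bv\|_{1,t}\le c(t)^{-1}\|\pi\|_{0,t}$. Hence it suffices to produce, for each such $\pi$ and with a constant $C$ independent of $t$, a field $\bv\in H^1(t)$ solving $\divG\bv=\pi$ with $\|\bv\|_{1,t}\le C\|\pi\|_{0,t}$; then $c(t)\ge C^{-1}$ for all $t$.

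The crucial ingredient is the divergence identity of the surface Piola transform: for tangential $\bw\in H^1(0)$ and $q\in L^2(\Gamma(t))$,
\[
\int_{\Gamma(t)} q\,\divG(P_t\bw)\,ds = \int_{\Gamma_0}(q\circ\Phi_t^n)\,\mathrm{div}_{\Gamma_0}\bw\,ds,
\]
which follows from the change of variables $ds_x=J\,ds_z$ together with the pointwise relation $\divG(P_t\bw)=J^{-1}\,(\mathrm{div}_{\Gamma_0}\bw)\circ\Phi_{-t}^n$ underlying the solenoidality-preserving property of $P_t$ recorded after \eqref{Piola}. With this identity the construction is as follows. Given $\pi$ as above, I set $q_0:=J\,(\pi\circ\Phi_t^n)$ on $\Gamma_0$. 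Since $ds_x=J\,ds_z$, one has $\int_{\Gamma_0}q_0\,ds=\int_{\Gamma(t)}\pi\,ds=0$, so $q_0$ is admissible for the inf-sup \eqref{infsup} at $t=0$. Applying that inf-sup with its fixed constant $c(0)>0$ gives $\bw\in H^1(0)$ with $\mathrm{div}_{\Gamma_0}\bw=q_0$ and $\|\bw\|_{1,0}\le c(0)^{-1}\|q_0\|_{0,0}$. Pushing forward, $\bv:=\phi_t\bw=P_t\bw\in H^1(t)$, and the displayed identity yields, for every $q$,
\[
\int_{\Gamma(t)} q\,\divG\bv\,ds=\int_{\Gamma_0}(q\circ\Phi_t^n)\,q_0\,ds=\int_{\Gamma(t)}q\,\pi\,ds,
\]
so that $\divG\bv=\pi$ as required.

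It remains to bound $\bv$ uniformly. By the pushforward continuity established in section~\ref{secspace1}, $\|\bv\|_{1,t}=\|\phi_t\bw\|_{1,t}\le C\|\bw\|_{1,0}$ with $C$ independent of $t$, and then $\|\bw\|_{1,0}\le c(0)^{-1}\|q_0\|_{0,0}$. Changing variables once more, $\|q_0\|_{0,0}^2=\int_{\Gamma_0}J^2(\pi\circ\Phi_t^n)^2\,ds=\int_{\Gamma(t)}J\,\pi^2\,ds\le \|J\|_{C(\cS_0)}\|\pi\|_{0,t}^2$, and $\|J\|_{C(\cS_0)}\le\|J\|_{C^1(\cS_0)}$ is bounded uniformly by Lemma~\ref{lemS}. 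Combining these three estimates gives $\|\bv\|_{1,t}\le C\,c(0)^{-1}\|J\|_{C(\cS_0)}^{1/2}\|\pi\|_{0,t}$, whence $c(t)\ge c_0:=c(0)\,\|J\|_{C(\cS_0)}^{-1/2}/C>0$ uniformly in $t$.

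The main obstacle, and the reason a plain $L^2$-pushforward of $\pi$ does not work directly, is the matching of the zero-mean compatibility conditions: the naive transfer $\pi\mapsto\pi\circ\Phi_t^n$ is not mean-zero on $\Gamma_0$, and its mean-zero part could in principle be much smaller than $\pi$ itself. Weighting by the Jacobian, $q_0=J\,(\pi\circ\Phi_t^n)$, is precisely what makes the zero-mean condition transfer exactly, since it becomes $\int_{\Gamma_0}J\,(\pi\circ\Phi_t^n)=\int_{\Gamma(t)}\pi=0$, and it is simultaneously the right-hand side for which the Piola divergence identity returns $\divG\bv=\pi$. Once this weighting is in place, all remaining constants are the fixed $c(0)$ and the $t$-uniform bounds on $J$ and on the pushforward map from Lemma~\ref{lemS} and section~\ref{secspace1}, so the uniformity of $c(t)$ follows.
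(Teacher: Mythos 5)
Your proof is correct. It rests on the same two pillars as the paper's argument---the surface Piola transform's divergence identity $\divG(P_t\bw)=J^{-1}(\Div_{\Gamma_0}\bw)\circ\Phi_{-t}^n$ and the uniform bounds on $J$ and on the pushforward from Lemma~\ref{lemS} and section~\ref{secspace1}---but the route through them differs in two respects. First, you pass to the dual (Babu\v{s}ka--Brezzi) formulation and construct a uniformly bounded right inverse of $\divG$, pushing a field \emph{forward} from $\Gamma_0$, whereas the paper estimates the sup directly by pulling the test function \emph{back} to $\Gamma_0$ and invoking the $t=0$ inf-sup on the transported pressure. Second, and more substantively, you resolve the mean-zero compatibility mismatch by weighting the transported datum with the Jacobian, $q_0=J\,(\pi\circ\Phi_t^n)$, so that $\int_{\Gamma_0}q_0=\int_{\Gamma(t)}\pi=0$ holds exactly; the paper instead subtracts the constant $c=-|\Gamma_0|^{-1}\int_{\Gamma_0}\pi\circ\Phi_t^n$ (harmless in the numerator since $\int_{\Gamma_0}\Div_{\Gamma_0}\bw=0$ on a closed surface) and then discards it in the lower bound via the orthogonality $\|\pi+c\|_{0,t}^2=\|\pi\|_{0,t}^2+c^2|\Gamma(t)|$. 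Your Jacobian weighting is arguably the cleaner bookkeeping, since the datum you feed into the $t=0$ problem is exactly the one whose Piola pushforward has divergence $\pi$; the paper's version avoids invoking the equivalence with the right-inverse formulation and stays entirely at the level of the quotient. Both yield the same kind of explicit lower bound, $c(t)\gtrsim c(0)\,\|J\|_{C(\cS_0)}^{-1/2}$ up to the pushforward constant.
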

\begin{proof}
	We use a similar approach as in the proof of the previous lemma, and derive an estimate on $\Gamma(t)$ by pulling forward the result on $\Gamma_0$. We  use the pullforward  $\phi_t$ that is based on the Piola transform and satisfies $\Div_{\Gamma(t)} (\phi_t \bw)(\bx)= J^{-1} \Div_{\Gamma_0} \bw (\bz)$, $\bz \in \Gamma_0$, $\bx =\Phi_t^n(\bz) \in \Gamma(t)$.  Take $\bv\in H^1(t)$ and $\pi\in L^2(\Gamma(t))$ with $\int_{\Gamma(t)}\pi=0$. Define $c:=-|\Gamma_0|^{-1} \int_{\Gamma_0} \pi \circ \Phi_t^n \, ds$ and  $\bw:=\phi_{-t}\bv \in H^1(0)$. Note that $\|\bv\|_{1,t} \le C\|\bw\|_{1,0}$ with a constant $C$ uniformly bounded in $t\in [0,T]$  (compatibility property). We have
	\[
	(\pi, {\Div}_{\Gamma(t)} \bv)_{0,t}=\big(\pi, {\Div}_{\Gamma(t)}(\phi_t \bw)\big)_{0,t} =(\pi\circ \Phi_t^n, {\Div}_{\Gamma_0} \bw)_{0,0}. 
	\]
	Using this and the result \eqref{infsup} for $t=0$ we get:
	\begin{align*}  &C\sup_{0\neq\bv\in H^1(t)}\frac{\int_{\Gamma(t)}\pi\divG\bv\,ds}{\|\bv\|_{1,t}}  \ge \sup_{0\neq\bw\in H^1(0)}\frac{\int_{\Gamma_0} (\pi\circ \Phi_t^n) {\Div}_{\Gamma_0} \bw\, ds}{\|\bw\|_{1,0}} \\ & \geq c(0)\| \pi\circ \Phi_t^n +c\|_{0,0} \geq c(0) \|J^\frac12\|_{C(\cS)}^{-1}\| \pi +c\|_{0,t}  \\ & =c(0) \|J^\frac12\|_{C(\cS)}^{-1}\big(\| \pi\|_{0,t} + c |\Gamma(t)|^\frac12\big) \geq c(0) \|J^\frac12\|_{C(\cS)}^{-1}\| \pi\|_{0,t},
	\end{align*}
	which yields a $t$-independent strictly positive lower bound for $c(t)$ in \eqref{infsup}.
\end{proof}
\smallskip

We now derive a uniform interpolation estimate.
\begin{lemma} The interpolation inequality (Ladyzhenskaya's inequality)
	\begin{equation} \label{Lady}
		\|\bv\|_{L^4(\Gamma(t))} \leq C \|\bv\|_{0,t}^\frac12 \|\bv\|_{1,t}^\frac12, \quad \bv\in H^1(t).
	\end{equation}
	holds with a constant $C < \infty$  independent of $t$.
\end{lemma}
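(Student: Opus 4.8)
The plan is to follow the same pull-back device used in Lemmas~\ref{Propkorn} and~\ref{leminfsup}: transport $\bv$ from $\Gamma(t)$ to the fixed reference surface $\Gamma_0$, invoke the classical (fixed-surface) Ladyzhenskaya inequality on $\Gamma_0$, and then carry all three norms back to $\Gamma(t)$ using the uniform smoothness bounds \eqref{smooth_est} of Lemma~\ref{lemS}. Since \eqref{Lady} is a scalar interpolation statement applied to a vector field, the only genuinely two-dimensional analytic input is the componentwise Ladyzhenskaya inequality on the $C^3$ compact surface $\Gamma_0$, which I take as known; everything else is a change of variables with a surface Jacobian that is bounded above and below uniformly in $t$.

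Concretely, I would first fix $t$ and $\bv \in H^1(t)$ and set $\bu := \bv\circ\Phi_t^n \in H^1(\Gamma_0)^3$, using plain composition rather than the Piola map, which suffices here because the solenoidal property is irrelevant. The second identity in \eqref{aux551} gives $D_{\Gamma_0}\bu = \big((D_\Gamma\bv)\circ\Phi_t^n\big)\bD$, and with the area change of variables $ds_x = J\,ds_z$ one obtains, using \eqref{smooth_est}, the bounds $\|\bu\|_{0,0}\le \|J^{-1}\|_{C(\cS)}^{1/2}\|\bv\|_{0,t}$ and $\|D_{\Gamma_0}\bu\|_{0,0}\le \|\bD\|_{C(\cS_0)}\|J^{-1}\|_{C(\cS)}^{1/2}\|D_\Gamma\bv\|_{0,t}$, hence $\|\bu\|_{1,0}\le C\|\bv\|_{1,t}$, together with the companion $L^4$ estimate $\|\bv\|_{L^4(\Gamma(t))}\le \|J\|_{C(\cS_0)}^{1/4}\|\bu\|_{L^4(\Gamma_0)}$. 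All these constants are uniform in $t\in[0,T]$ by Lemma~\ref{lemS}.

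Next I would reduce the vector inequality on $\Gamma_0$ to the scalar one: from $|\bu|^4\le 3\sum_{i=1}^3 u_i^4$ and the scalar Ladyzhenskaya inequality $\|u_i\|_{L^4(\Gamma_0)}\le C_0\|u_i\|_{L^2(\Gamma_0)}^{1/2}\|u_i\|_{H^1(\Gamma_0)}^{1/2}$, noting that each $\nabla_{\Gamma_0}u_i$ is a row of $D_{\Gamma_0}\bu$ so that $\|u_i\|_{H^1(\Gamma_0)}\le\|\bu\|_{1,0}$, one gets $\|\bu\|_{L^4(\Gamma_0)}\le C_0'\|\bu\|_{0,0}^{1/2}\|\bu\|_{1,0}^{1/2}$. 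Chaining the three transfer estimates with this bound yields $\|\bv\|_{L^4(\Gamma(t))}\le C\|\bv\|_{0,t}^{1/2}\|\bv\|_{1,t}^{1/2}$ with $C$ independent of $t$.

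I do not expect a real obstacle, since the argument is structurally identical to the preceding two lemmas. The only point requiring a little care is that the interpolation inequality mixes two different norms raised to fractional powers, so I must verify that the lower-order factor $\|\cdot\|_{0}$, the full factor $\|\cdot\|_{1}$, and the target $L^4$ norm all transfer between $\Gamma_0$ and $\Gamma(t)$ with constants uniformly bounded in $t$; this is precisely what \eqref{smooth_est} supplies. If one prefers not to quote an unproven fixed-surface statement, the scalar inequality on $\Gamma_0$ can itself be derived from a finite $C^3$ atlas, a partition of unity, and the planar Ladyzhenskaya inequality, at the cost of a slightly longer but entirely standard computation.
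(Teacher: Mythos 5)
Your proposal is correct and follows essentially the same route as the paper: pull $\bv$ back to $\Gamma_0$ by plain composition with the normal flow map, apply the fixed-manifold Ladyzhenskaya inequality componentwise on $\Gamma_0$ (the paper cites estimate (II.38) of \cite{Aubin}), and transfer the $L^2$, $H^1$ and $L^4$ norms back with constants uniform in $t$ via \eqref{aux551}, the Jacobian change of variables, and \eqref{smooth_est}. The transfer constants you compute are the same ones the paper uses, so no further comparison is needed.
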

\begin{proof}   
	Consider  $v\in H^1(\Gamma(t))$ and let $\hat v = v\circ\Phi_{-t}^n$. For a compact Riemann manifold $\Gamma_0$, the estimate (II.38) from \cite{Aubin} yields  $\|\hat v\|_{L^4(\Gamma_0)} \leq C \|\hat v\|_{0,0}^\frac12 \|\hat v\|_{1,0}^\frac12$. An examination of the proof shows that the estimate remains true if $\Gamma_0$ is a $C^2$ compact manifold. With the help of this estimate applied component-wise and \eqref{smooth_est} we calculate for $\bv\in H^1(t)$
	\[
	\begin{split}
		\|\bv&\|_{L^4(\Gamma(t))}= \|\hat \bv J^\frac{1}{4}\|_{L^4(\Gamma_0)}\le   C \|J\|^{\frac14}_{C(\Gs_0)} \|\hat \bv\|_{L^4(\Gamma_0)}
		\leq C \|\hat \bv\|_{0,0}^\frac12 \|\hat \bv\|_{1,0}^\frac12 \\
		& \leq C \|J^{-\frac{1}{2}} \bv\|_{0,t}^\frac12\left( \|J^{-\frac{1}{2}} \bv\|_{0,t}+ \|J^{-\frac{1}{2}} \bD^{-T}\nablaG \bv\|_{0,t} \right)^{\frac12}\\
		& \leq C \|\bv\|_{0,t}^\frac12\left( \|\bv\|_{0,t}+  \|\nablaG \bv\|_{0,t} \right)^{\frac12}
		\leq C\,\|\bv\|_{0,t}^\frac12 \| \bv\|_{1,t}^\frac12,
	\end{split}
	\] 
	with some $C$ independent of $t$.\quad
\end{proof}
\smallskip

For  $\bxi\in H^1(t)$ consider the Helmholtz decomposition (see e.g. \cite{reusken2020stream})
\begin{equation} \label{Helmholtz}
\bxi =  \bxi_1+ \bxi_2,\quad \text{with}~\bxi_1=\nabla_\Gamma \phi,~\phi\in H^1(\Gamma(t))~\text{and}   ~\bxi_2\in V_1(t).
\end{equation}
\begin{lemma} \label{LemmaHelm} For $\bxi_i$ as in \eqref{Helmholtz} we have $\bxi_i\in H^1(t)$ and $\|\bxi_i\|_{1,t}\le C\|\bxi\|_{1,t}$, $i=1,2$,
	 with a constant $C$ finite and independent of $t$.
\end{lemma}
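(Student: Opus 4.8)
The plan is to reduce this \emph{vector} estimate to a \emph{scalar} elliptic regularity estimate for the potential $\phi$, and then obtain the uniformity in $t$ in exactly the same style as the proofs of Lemmas~\ref{Propkorn} and~\ref{leminfsup}: pull the scalar problem back to the fixed reference surface $\Gamma_0$, apply a fixed-surface result, and transport back using the uniform bounds \eqref{smooth_est} of Lemma~\ref{lemS}.

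First I would recall the variational characterization of $\phi$. By the defining property of \eqref{Helmholtz}, $\bxi_2 = \bxi - \nablaG\phi \in \Vd(t)$ is solenoidal and $L^2$-orthogonal to surface gradients, so the potential $\phi \in H^1(\Gamma(t))$, normalized by $\int_{\Gamma(t)}\phi\,ds = 0$, is the unique weak solution of the surface Poisson problem
\[
\int_{\Gamma(t)} \nablaG\phi\cdot\nablaG\psi \,ds = \int_{\Gamma(t)} \bxi\cdot\nablaG\psi\,ds \qquad \text{for all}~\psi\in H^1(\Gamma(t)),
\]
i.e.\ $\Delta_\Gamma\phi = \divG\bxi$ weakly; the right-hand side has zero mean because $\Gamma(t)$ is closed. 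Testing with $\psi=\phi$ and Cauchy--Schwarz gives immediately $\|\nablaG\phi\|_{0,t}\le\|\bxi\|_{0,t}$, which already controls $\|\bxi_1\|_{0,t}$ and hence $\|\bxi_2\|_{0,t}$.

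The crux is then the $H^2$ regularity estimate $\|\phi\|_{H^2(\Gamma(t))} \le C\,\|\divG\bxi\|_{0,t}$ with $C$ independent of $t$, since $\bxi_1=\nablaG\phi\in H^1(t)$ is equivalent to $\phi\in H^2(\Gamma(t))$. One \emph{cannot} transport the decomposition itself, because the Piola pushforward maps solenoidal fields to solenoidal fields but does not map gradients to gradients; so instead I would transport the scalar equation. Setting $\hat\phi = \phi\circ\Phi_t^n$ on $\Gamma_0$ and using \eqref{aux551} together with the area element $J$, the bilinear form becomes $\int_{\Gamma_0} \big(J\,\bD^{-1}\bD^{-T}\nabla_{\Gamma_0}\hat\phi\big)\cdot\nabla_{\Gamma_0}\hat\psi \, ds_0$, i.e.\ a variational problem on the \emph{fixed} surface $\Gamma_0$ with symmetric coefficient matrix $\mathbf{M}=J\,\bD^{-1}\bD^{-T}$ restricted to $T\Gamma_0$. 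By Lemma~\ref{lemS} and \eqref{smooth_est}, the entries of $\mathbf{M}$ are bounded in $C^1(\cS_0)$ and $\mathbf{M}$ is uniformly positive definite, uniformly in $t\in[0,T]$. Standard $H^2$ regularity for a second-order uniformly elliptic operator with uniformly Lipschitz coefficients on the fixed closed $C^3$ manifold $\Gamma_0$ then yields $\|\hat\phi\|_{H^2(\Gamma_0)}\le C\,\|\mathrm{rhs}\|$ with a constant depending only on the uniform ellipticity and $C^1$ bounds, hence independent of $t$; transporting back to $\Gamma(t)$ via \eqref{smooth_est} converts this into the desired estimate on $\Gamma(t)$.

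Combining the pieces gives $\|\bxi_1\|_{1,t}=\|\nablaG\phi\|_{1,t}\le \|\phi\|_{H^2(\Gamma(t))}\le C\,\|\divG\bxi\|_{0,t}\le C\,\|\bxi\|_{1,t}$, and then $\|\bxi_2\|_{1,t}\le\|\bxi\|_{1,t}+\|\bxi_1\|_{1,t}\le C\,\|\bxi\|_{1,t}$, with $C$ finite and independent of $t$, proving the claim for $i=1,2$. I expect the main obstacle to be precisely this uniform $H^2$ regularity step: the non-covariance of the Helmholtz splitting under the Piola transform forces one to argue at the level of the scalar elliptic equation rather than the decomposition, and the elliptic-regularity constant must be seen to depend only on quantities that are uniformly controlled through Lemma~\ref{lemS}.
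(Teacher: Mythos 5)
Your argument is correct, but it takes a genuinely different route from the paper. The paper does not touch the potential $\phi$ at all: it observes that $\divG \bxi_1=\divG\bxi$, $\curlG\bxi_1=0$ (and the complementary identities for $\bxi_2$), and then invokes the norm equivalence $\|\bu\|_{1,t} \sim \|\bu\|_{0,t}+\|\divG\bu\|_{0,t}+\|\curlG\bu\|_{0,t}$ on $H^1(t)$ from \cite[Theorem~3.2]{reusken2020stream}, whose constants depend on $t$ only through the Gaussian curvature of $\Gamma(t)$; uniformity then follows in one line from $\cS\in C^3$. Your proof instead localizes the analytic content in a uniform scalar $H^2$ estimate for the Poisson problem $\Delta_\Gamma\phi=\divG\bxi$, obtained by pulling the equation back to $\Gamma_0$ and controlling the coefficient matrix $J\,\bD^{-1}\bD^{-T}$ via Lemma~\ref{lemS} --- the same transport mechanism the paper uses for Lemmas~\ref{Propkorn} and~\ref{leminfsup}, and your remark that one must transport the scalar equation rather than the decomposition (since the Piola map does not preserve gradients) is exactly right. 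The trade-off: the paper's proof is shorter and offloads the elliptic theory to a cited div--curl equivalence (which is itself a Hodge-type regularity statement of comparable depth), while yours is self-contained modulo standard $H^2$ regularity for divergence-form operators with Lipschitz coefficients on a fixed closed manifold, at the cost of having to track second derivatives of $\Phi_t^n$ when converting $\|\hat\phi\|_{H^2(\Gamma_0)}$ back into $\|\nablaG\phi\|_{1,t}$ (which is fine, since $\|\bD\|_{C^1(\cS_0)}$ and $\|\bH\|_{L^\infty(\cS)}$ are uniformly bounded). Both arguments yield the claim with a $t$-independent constant.
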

\begin{proof}
 Due to the $L^2$ orthogonality of the Helmholtz decomposition we have $\|\bxi_1\|_{0,t}^2 +\|\bxi_2\|_{0,t}^2=\|\bxi\|_{0,t}^2$. Also note that $\divG \bxi_2=0$, $\divG \bxi=\divG \bxi_1$, $\curlG \bxi_1=0$, $\curlG \bxi=\curlG \bxi_2$. Furthermore on $H^1(t)$ we have the norm equivalence $\|\bu\|_{1,t} \sim \|\bu\|_{0,t} + \|\divG \bu \|_{0,t} +\|\curlG \bu\|_{0,t}$. A $t$-dependence in the  constants in this norm equivalence enters only through the Gaussian curvature of $\Gamma(t)$, cf. \cite[Theorem~3.2]{reusken2020stream}. Due to the smoothness property $\cS \in C^3$ the Gaussian curvature is uniformly bounded on  $\cS$ and thus the constants in this norm equivalence can be taken independent of $t$.
Using these results we get
\[
\|\bxi_1\|_{1,t} \le C(\|\bxi_1\|_{0,t} + \|
\divG \bxi_1 \|_{0,t} +\|\curlG \bxi_1\|_{0,t})=C(\|\bxi_1\|_{0,t} + \|
\divG \bxi \|_{0,t})  \le C \|\bxi\|_{1,t},
\]
and by similar arguments $\|\bxi_2\|_{1,t}\le C\|\bxi\|_{1,t}$
 with a constant $C$ uniformly bounded in $t$.
\end{proof}

\subsection{Solution space} \label{secspace2}
In this section we  introduce a subspace of $L_{\Vd}^2$ consisting of  functions for which a suitable weak normal time derivative exists. This space will be the solution space in the weak formulation of TSNSE. 

We recall the Leibniz rule 
\[
  \frac{d}{dt} \int_{\Gamma(t)}  v\, ds = \int_{\Gamma(t)} (\pc v + v \divG \bw_N)\, ds = 
    \int_{\Gamma(t)} (\pc v + v w_N \kappa)\, ds,
\]
Thus for velocity fields $\bv,\bu \in C^1(\Gs)$ we get
\begin{equation} \label{PI1}
  \frac{d}{dt} \int_{\Gamma(t)} \bv \cdot \bu \, ds=\int_{\Gamma(t)}\left( \pc (\bv \cdot \bu) +   (\bv \cdot \bu)  w_N \kappa \right)\, ds. 
\end{equation}
This implies the integration by parts identity
\begin{equation} \label{PIid} \begin{split}
 & \int_0^T\int_{\Gamma(t)}( \pc \bv \cdot \bu +\bv \cdot \pc \bu  + (\bv \cdot \bu)  w_N \kappa)\, ds\,dt \\   & = \int_{\Gamma(T)} \bv \cdot \bu \, ds -\int_{\Gamma_0} \bv \cdot \bu \, ds, \quad \bv,\bu \in C^1(\Gs)^3. 
\end{split}\end{equation}
Based on this we define for $\bv \in L_{H^1}^2$ the normal time derivative as the functional $\pc \bv$:
\begin{equation} \label{weak}
 \langle \pc \bv, \bxi\rangle:= - \int_0^T\int_{\Gamma(t)}\left(  \bv \cdot \pc \bxi  + (\bv \cdot \bxi)  w_N \kappa\right)\, ds\,dt, \quad \bxi \in \widehat{\cD}_0.
\end{equation}
Note that  functions in $\widehat{\cD}_0$ are not necessarily solenoidal, cf. \eqref{D0hat}.
Restricting now to $ L_{\Vd}^2\subset  L_{H^1}^2$, assume $\bv \in L_{\Vd}^2$ is such that
\[
  \|\pc \bv\|_{(L_{\Vd}^2)'}:= \sup_{\bxi \in \cD_0}\frac{\langle \pc \bv, \bxi\rangle}{
  \|\bxi\|_{\bf 1}}
\]
is bounded.  Since $\cD_0$ is dense in $L_{\Vd}^2$, $\pc \bv$ can 
 then be extended to a bounded linear functional on $L_{\Vd}^2$. We use $(L_{\Vd}^2)'\cong L_{\Vd'}^2$ and introduce the space
\[ \begin{split} 
  \bW(\Vd,\Vd') &= \{\, \bv \in L_{\Vd}^2~|~\pc \bv \in L_{\Vd'}^2\,\}, \quad \text{with}\\
  (\bv,\bu)_W &:=\int_0^T (\bv(t),\bu(t))_{1,t} +(\pc \bv(t),\pc \bu(t))_{\Vd(t)'}\, dt.
  \end{split}
\]
This space is used as solution space in the weak formulation of TSNSE below. In the remainder of this section we derive certain useful properties of this space. For this it will be helpful to introduce in addition to the Lagrangian derivatives $\dot \bv$ (material derivative) and $\pc \bv$ (normal time derivative) one other Lagrangian derivative, which is  based on the  pushforward operator $\phi_t$:
\begin{equation}\label{eq:Past}
	\partial^\ast \bv(t):= \phi_t\left(\frac{d}{dt} \phi_{-t}\bv(t)\right), \quad \bv\in \cD.
\end{equation}
The reason that we introduce the $\partial^\ast$ derivative is, that it is the same as the one used in the general framework in \cite{alphonse} and we can use results derived in that paper.
Note that the $	\partial^\ast$ derivative is defined for tangential flow fields and based on the Piola transform implying 
\begin{equation} \label{aux709}
\bn\cdot\partial^\ast \bv=0\quad\text{and}\quad \divG \partial^\ast \bv=0~~ \text{for}~\bv\in \cD.
\end{equation}
We now derive   relations between the derivatives $\partial^\ast$ and $\pc $. 

\begin{lemma} \label{lemrelation}
	For $ \bv\in \cD$ 
	the following holds:
	\begin{align}
		\pc \bv & = \partial^\ast \bv - {\bA} (\pc {\bA}^{-1}) \bv,\label{eq:ident1}\\
		\bP	\pc \bv & = \partial^\ast \bv - \bA \bP (\pc {\bA}^{-1}) \bv.\label{eq:ident2}
	\end{align}
\end{lemma}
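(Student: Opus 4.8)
The plan is to compute $\partial^\ast \bv$ directly from its definition \eqref{eq:Past} by unwinding the pullback $\phi_{-t}$ and the pushforward $\phi_t$, and to recognize that differentiating in $t$ at a fixed base point $\bz \in \Gamma_0$ is precisely the normal time derivative $\pc$ once the result is read back on $\cS$. Throughout I fix $\bz \in \Gamma_0$ and write $\bx = \Phi_t^n(\bz)$.

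First I would write the pullback explicitly: by \eqref{Pio2} one has $(\phi_{-t}\bv)(\bz) = \bA^{-1}(t,\bx)\,\bv(t,\bx)$, which, for $\bv \in \cD$, is a smooth curve in $\cV(0)$. Differentiating this in $t$ for fixed $\bz$, and using that for any $C^1$ function $f$ on $\cS$ one has $\frac{d}{dt} f(t, \Phi_t^n(\bz)) = (\pc f)(t,\bx)$ by \eqref{Lagr}, the Leibniz product rule for the first-order derivative $\pc$ gives
\[
\frac{d}{dt}\big(\phi_{-t}\bv\big)(\bz) = (\pc \bA^{-1})(t,\bx)\,\bv(t,\bx) + \bA^{-1}(t,\bx)\,(\pc \bv)(t,\bx),
\]
where $\pc \bA^{-1}$ is well defined and continuous since $\bA^{-1} \in C^1(\cS)$ by Lemma~\ref{lemS}. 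Applying the pushforward $\phi_t$, i.e.\ multiplying by $\bA(t,\bz)$, and using $\bA\bA^{-1} = I_{\mathbb{R}^3}$, I obtain
\[
\partial^\ast \bv = \bA(\pc \bA^{-1})\bv + \bA\bA^{-1}\pc \bv = \bA(\pc\bA^{-1})\bv + \pc \bv,
\]
which rearranges to \eqref{eq:ident1}.

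For \eqref{eq:ident2} I would apply the tangential projector $\bP = \bP(\bx)$ to \eqref{eq:ident1}. Two facts are needed. First, $\partial^\ast \bv$ is already tangential: by \eqref{aux709} we have $\bn \cdot \partial^\ast \bv = 0$, hence $\bP \partial^\ast \bv = \partial^\ast \bv$. Second, $\bA$ intertwines the projectors, $\bP(\bx)\,\bA = \bA\,\bP(\bz)$, which follows from the fact that $\bA$ maps $T\Gamma_0$ into $T\Gamma(t)$ and $T\Gamma_0^\perp$ into $T\Gamma(t)^\perp$; concretely, from the explicit formula \eqref{Adef} together with $\bD\,\bP(\bz) = \bD$ and $\bP(\bx)\,\bD = \bD$ one checks $\bP(\bx)\bA = J^{-1}\bD = \bA\,\bP(\bz)$. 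Combining these two facts turns $\bP$ applied to \eqref{eq:ident1} into \eqref{eq:ident2}, where the projector sandwiched on the right-hand side is understood at the base point $\bz \in \Gamma_0$.

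The product rule and the matrix identities are routine; the only point requiring care is the bookkeeping of where each object lives --- $\bA^{-1}$ and $\bv$ on $\cS$ versus $\bA$ on $\cS_0$ --- so that $\frac{d}{dt}$ is correctly identified with $\pc$, and the two distinct tangential projectors $\bP(\bx)$ and $\bP(\bz)$ are kept apart in the step $\bP(\bx)\bA = \bA\,\bP(\bz)$. This intertwining relation is the main (mild) obstacle, and is the only place where the geometry of the Piola map genuinely enters the argument.
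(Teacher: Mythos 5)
Your proposal is correct and follows essentially the same route as the paper: differentiate the pullback $\bA^{-1}\bv$ along the normal trajectories via the product rule, push forward with $\bA$ using $\bA\bA^{-1}=I$, and obtain \eqref{eq:ident2} from \eqref{eq:ident1} via $\bP\partial^\ast\bv=\partial^\ast\bv$ and the intertwining relation $\bP(\bx)\bA=\bA\bP(\bz)$. Your explicit verification of the intertwining relation and the remark about which projector sits in the sandwich are slightly more detailed than the paper's one-line justification, but the argument is the same.
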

\begin{proof}
Using the definitions of the pushforward and pullback mappings we compute 
	\[
	\begin{aligned}
	  \phi_t\left(\frac{d}{dt} \phi_{-t}\bv(t)\right)(\bx) &= \bA(t,\bz) \frac{d}{dt}\left[{\bA}^{-1}(t,\Phi_t^n(\bz))\bv(t,\Phi_t^n(\bz))\right] \\ 
		&= 
		\bA(t,\bz)\left(\pc \bA^{-1}(t,\bx)\bv(t,\bx) + {\bA}^{-1}(t,\bx) \pc \bv(t,\bx)\right)\\
		&= 
		\bA(t,\bz) \pc \bA^{-1}(t,\bx)\bv(t,\bx) + \pc \bv(t,\bx),
	\end{aligned}
	\]
	which yields the result \eqref{eq:ident1}.
	 The result \eqref{eq:ident2} follows from \eqref{eq:ident1} using $\bP\partial^\ast \bv=\partial^\ast \bv$ and $\bP \bA=\bP(\bx)\bA(t,\bz)= \bA(t,\bz)\bP(\bz)$.
\end{proof}
\smallskip

 From \eqref{eq:ident1} we obtain the identity
\[
\left(\partial^\ast \bv, \bxi\right)_{\bf 0}= 	\left(\pc \bv, \bxi\right)_{\bf 0} + \left(\bC \bv , \bxi\right)_{\bf 0} , \quad \forall\,\bv\in\cD,~\bxi \in \widehat{\cD}_0,
\] 
with $\bC:={\bA}\bP (\pc {\bA}^{-1})$.  Based on this, we define $\partial^\ast \bv$ for $\bv\in L_{\Vd}^2$ as the functional
\begin{equation}\label{aux755}
\langle \partial^\ast  \bv, \bxi\rangle: = \langle \pc  \bv, \bxi\rangle + \left(\bC \bv , \bxi\right)_{\bf 0}, \quad \bxi \in \widehat{\cD}_0.
\end{equation}
with $ \langle \pc  \bv, \bxi\rangle$ defined in \eqref{weak}.  
The density of $\widehat{\cD}_0$ in $L^2_{H^1}$ and  of ${\cD}_0\subset\widehat{\cD}_0$ in $L^2_{\Vd}$ allows  us to define $\partial^\ast \bv$ as an element of $ L^2_{H^{-1}}$ and $L^2_{\Vd'}$, respectively.
The following result holds:
\begin{equation}\label{VHeqv}
	\partial^\ast \bv\in L^2_{\Vd'}\quad\Longleftrightarrow\quad \partial^\ast \bv\in L^2_{H^{-1}}, \quad \bv\in L^2_{\Vd}.
\end{equation}
Implication ``$\Leftarrow$'' in  \eqref{VHeqv} is trivial since $\Vd\subset H^{1}$. To see ``$\Rightarrow$'', consider any 
$\bv\in \cD$  and $\bxi \in L^2_{H^1}$ together with its Helmholtz decomposition $\bxi=\nablaG\phi+\bxi_2$, cf. \eqref{Helmholtz}.
Thanks to Lemma~\ref{LemmaHelm} we get $\nablaG\phi \in L^2_{H^1}$, $\bxi_2 \in L_{\Vd}^2$ and $\|\nablaG\phi\|_{\bf 1}+ \|\bxi_2\|_{\bf 1} \le C\|\bxi\|_{\bf 1}$. Since $\bxi_2 \in L_{\Vd}^2$ we have
\begin{equation} \label{g1}
	|\langle \partial^\ast \bv, \bxi_2\rangle | \leq \|\partial^\ast\bv\|_{L^2_{\Vd'}} \|\bxi_2\|_{\bf 1} \le C \|\partial^\ast\bv\|_{L^2_{\Vd'}} \|\bxi\|_{\bf 1},
\end{equation}
while for the other component we get employing \eqref{aux709}
\begin{equation}\label{g2} 
	\langle \partial^\ast \bv, \nablaG\phi\rangle = ( \partial^\ast \bv, \nablaG\phi)_{\bf 0} =-(\divG \partial^\ast \bv, \phi)_{\bf 0} =0.
\end{equation}
We thus conclude $	|\langle \partial^\ast \bv, \bxi \rangle | \leq  C \|\partial^\ast\bv\|_{L^2_{\Vd'}} \|\bxi\|_{\bf 1}$ for all $\bv\in\cD$ and $\bxi\in L^2_{H^1}$. The result in \eqref{VHeqv} follows from the density of $\cD$ in $L^2_{\Vd}$.

\smallskip
We are now ready to prove the following result.
\begin{lemma}\label{Lem:W} The space $\bW(\Vd,\Vd')$ is a Hilbert space and $\cD$ is dense in $\bW(\Vd,\Vd')$. 
For any  $\bv\in\bW(\Vd,\Vd')$ and $t\in[0,T]$,  $\bv(t)$ is well-defined as an element of $\Hd(t)$ and it holds
\[
\sup_{t\in[0,T]} \|\bv(t)\|_{0,t}\le C \|\bv\|_W.
\]
\end{lemma}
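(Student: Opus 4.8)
The plan is to transfer the entire problem to the fixed reference surface $\Gamma_0$ via the pushforward $\phi_t$, where it reduces to a statement about the standard Bochner-space solution space $\mathbf{W}(V_0,V_0')$ (with $V_0 := V_1(0)$). The key observation is that $\partial^\ast$ is precisely the Lagrangian derivative used in the framework of \cite{alphonse}, and by the equivalence \eqref{VHeqv} together with \eqref{aux755} we have $\bv \in \bW(\Vd,\Vd')$ if and only if $\partial^\ast \bv \in L^2_{\Vd'}$, i.e. if and only if $\bv$ lies in the analogous solution space defined via $\partial^\ast$. Indeed, from \eqref{aux755} the functionals $\pc \bv$ and $\partial^\ast \bv$ differ only by the multiplication operator $\bC = \bA \bP(\pc \bA^{-1})$, which by Lemma~\ref{lemS} (the $C^1$ bounds \eqref{smooth_est}) is a bounded operator on $L^2_{\Vd}$ with operator norm uniform in $t$. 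Hence $\pc \bv \in L^2_{\Vd'}$ if and only if $\partial^\ast \bv \in L^2_{\Vd'}$, and the two norms $\|\bv\|_W$ and $\|\bv\|_{\bW_{\partial^\ast}} := (\|\bv\|_{\bf 1}^2 + \|\partial^\ast \bv\|_{L^2_{\Vd'}}^2)^{1/2}$ are equivalent with $t$-uniform constants.

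First I would invoke the abstract result from \cite{alphonse}: since $\Vd(0) \hookrightarrow \Hd(0) \hookrightarrow \Vd(0)'$ is a Gelfand triple with compact dense embedding, and $\{\Hd(t),\phi_t\}$, $\{\Vd(t),\phi_t\}$ are compatible pairs, the space $\bW_{\partial^\ast} := \{\bv \in L^2_{\Vd} \mid \partial^\ast \bv \in L^2_{\Vd'}\}$ is a Hilbert space, the smooth functions $\cD$ are dense in it, and there is a continuous embedding into $C([0,T];\Hd)$ with the transported norm, yielding $\sup_{t\in[0,T]} \|\phi_{-t}\bv(t)\|_{0,0} \le C \|\bv\|_{\bW_{\partial^\ast}}$. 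Transporting this trace bound back to $\Gamma(t)$ through $\phi_t$ and using the uniform bounds \eqref{smooth_est} on $\bA$ (which give $\|\bv(t)\|_{0,t} \le C\|\phi_{-t}\bv(t)\|_{0,0}$ with $C$ independent of $t$) converts it into the stated estimate $\sup_t \|\bv(t)\|_{0,t} \le C\|\bv\|_W$, where the norm equivalence from the previous paragraph replaces $\|\bv\|_{\bW_{\partial^\ast}}$ by $\|\bv\|_W$. The density of $\cD$ in $\bW(\Vd,\Vd')$ and the Hilbert-space property then follow directly from the corresponding statements for $\bW_{\partial^\ast}$ together with the norm equivalence, since equivalent norms preserve completeness and the same dense subset $\cD$ serves both spaces.

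The main obstacle I anticipate is verifying that $\bC$ is genuinely a bounded \emph{self-map} of the relevant spaces with $t$-uniform norm, rather than merely a pointwise-bounded matrix field. Concretely, one must check that $\bv \mapsto \bC\bv$ maps $L^2_{\Vd}$ into $L^2_{\Hd}$ (hence into $L^2_{\Vd'}$) boundedly; this requires that $\bC = \bA\bP(\pc\bA^{-1})$ is not only in $C(\cS)$ but that its action preserves (or at least maps continuously between) the tangentiality constraints encoded in $\Vd$. Here the delicate point is that $\bC\bv$ need not be solenoidal or even tangential, so the bound $(\bC\bv,\bxi)_{\bf 0} \le C\|\bv\|_{\bf 0}\|\bxi\|_{\bf 0}$ must be established directly against test functions $\bxi \in \cD_0$ and then extended by density; the projection $\bP$ inside $\bC$ and the smoothness $\bA^{-1} \in C^1(\cS)$ from Lemma~\ref{lemS} are exactly what make $\pc\bA^{-1}$ a bounded $C(\cS)$ matrix field, so that the $L^2$-$L^2$ pairing is controlled uniformly in $t$. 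Once this uniform boundedness of $\bC$ is in hand, the norm equivalence and all three conclusions follow mechanically from the $\partial^\ast$-framework results, so the entire argument hinges on this single transport-and-perturbation estimate.
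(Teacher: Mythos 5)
Your overall strategy coincides with the paper's: identify $\bW(\Vd,\Vd')$ with the space $\bW_\ast(\Vd,\Vd')$ defined through $\partial^\ast$, using the relation \eqref{aux755} and the uniform bound on $\bC=\bA\bP(\pc\bA^{-1})$ from Lemma~\ref{lemS}, and then import the Hilbert-space, density and trace results from \cite{alphonse}. That first reduction is carried out correctly, and the boundedness of $\bC$ that you single out as the main obstacle is in fact the easy part: since $\bC\in C(\cS)^{3\times3}$ one only needs $|(\bC\bv,\bxi)_{\bf 0}|\le \|\bC\|_{C(\cS)}\|\bv\|_{\bf 0}\|\bxi\|_{\bf 0}$, and $\bC\bv$ never has to be tangential or solenoidal because it only appears paired against test functions in $L^2$.

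The genuine gap is that the results you invoke from \cite{alphonse} (Hilbert-space structure, density of smooth functions, continuous embedding into $C([0,T];\Hd(0))$) do \emph{not} follow from the Gelfand-triple and compatible-pair structure alone. They require verifying Assumption 2.31 of \cite{alphonse}, namely that $\bv\mapsto\phi_{-(\cdot)}\bv$ is a homeomorphism between $\bW_\ast(\Vd,\Vd')$ and the fixed Bochner space $\cW(\Vd(0),\Vd(0)')$, and this is where the bulk of the paper's proof lives. The point is that the weak $\partial^\ast$ derivative here is defined through the moving-surface integration-by-parts identity \eqref{weak}, \eqref{aux755} (with the $w_N\kappa$ term and a pullback $\phi_{-t}$ that is not an $L^2$-isometry), so one must relate it explicitly to the distributional derivative $\partial_t(\phi_{-(\cdot)}\bv)$ on $\cS_0$. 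The computation \eqref{aux516} does this and produces the multiplier $\bT=J^{-1}(\bA\bA^T)^{-1}$; since $\bT\bxi$ is tangential but generally \emph{not} solenoidal for $\bxi\in\cD_0$, one cannot test $\partial^\ast\bv$ against $\bT\bxi$ within $L^2_{\Vd'}$ alone and must first upgrade $\partial^\ast\bv\in L^2_{\Vd'}$ to $\partial^\ast\bv\in L^2_{H^{-1}}$ via the equivalence \eqref{VHeqv}, which itself rests on the uniform Helmholtz decomposition of Lemma~\ref{LemmaHelm}. Your proposal skips this transport step entirely, so as written it does not establish the hypotheses under which the cited abstract results apply.
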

\begin{proof} The idea of the proof is to relate the space $\bW(\Vd,\Vd')$ to the space $
 \bW_\ast(\Vd,\Vd') := \{\, \bv \in L_{\Vd}^2~|~\partial^\ast  \bv \in L_{\Vd'}^2\,\}$, with 
 $\|\cdot\|_{W_\ast}=(\|\cdot\|_{\bf 1}^2+\|\partial^\ast\cdot\|_{L^2_{\Vd'}}^2)^{\frac12}$,  and to show that the latter is homeomorphic to a standard Bochner space for $\cS_0$.
 Lemma~\ref{lemS} ensures $\bC\in C(\Gs)^{3\times3}$ and thus from \eqref{aux755} we obtain 
 \[
 |	\langle \partial^\ast \bv, \bxi\rangle|-c\,\|\bv\|_{\bf 0}\|\bxi\|_{\bf 0}
 \le |	\langle \pc \bv, \bxi\rangle|\le |	\langle \partial^\ast \bv, \bxi\rangle|+c\,\|\bv\|_{\bf 0}\|\bxi\|_{\bf 0}.
 \]
Therefore,  $\partial^\ast \bv$ is a linear bounded  functional on $L_{\Vd}^2$ iff $\pc \bv$ has this property. We conclude $\bv\in \bW(\Vd,\Vd')$ iff   $\bv\in \bW_\ast(\Vd,\Vd')$. Moreover, the above inequalities, definition of 
the $\bW(\Vd,\Vd')$-norm, $\bW_\ast(\Vd,\Vd')$-norm and $L_{\Vd}^2\hookrightarrow L^2_{\Hd}$ yield 
\[
c\|\bv\|_W \le \|\bv\|_{W_\ast} \le C\|\bv\|_W,
\]
with constants $0<c$ and $C<+\infty$ independent of $\bv\in\bW(\Vd,\Vd')$ and so  $\bW(\Vd,\Vd')=\bW_\ast(\Vd,\Vd')$ algebraically and topologically. 
Thus, it is sufficient to check the claims of the lemma for $\bW_\ast(\Vd,\Vd')$. For the latter we apply results 
from \cite{alphonse}, more specifically,
 Corollary 2.32 ($\bW_\ast(\Vd,\Vd')$ is a Hilbert space), Lemma 2.35 (continuous embedding $\bW_\ast(\Vd,\Vd') \hookrightarrow C([0,T]; \Hd(0))$) and Lemma 2.38 (density of smooth functions). For these results to hold one has to verify Assumption 2.31~\cite{alphonse}, which requires the mapping $\bv\to\phi_{-(.)}\bv$ to be a homeomorphism  between $\bW_\ast(\Vd,\Vd')$ and $\cW(\Vd(0),\Vd(0)')$, the  
 standard Bochner space 
\[
\cW(\Vd(0),\Vd(0)') = \{\, \bv \in L^2((0,T),\Vd(0))~|~ \partial_t\bv \in L^2((0,T),\Vd(0)')\,\}.
\]
It remains to check this  homeomorphism property.
We already derived  the norm equivalence $\|\bv\|_{\bf 1}\simeq\|\phi_{-(.)}\bv\|_{L^2([0,T],\Vd(0))}$, cf. Section~\ref{secspace1}.  To relate the norms $\|\partial_t \phi_{-(\cdot)}\bv\|_{L^2((0,T),\Vd(0)')}$ and $\|\partial^\ast \bv\|_{L_{\Vd'}^2}$  we consider  the following equalities for $\bv\in L_{\Vd}^2$, $\bxi\in \cD_0$, 
$\widetilde\bxi=\phi_{-(\cdot)}\bxi\in C_0^\infty((0,T),\cV(0) )$ and $\bA^{-1}(t,\cdot):\,  T\Gamma(t)\to T\Gamma_0$: 
\begin{equation}\label{aux516}
\begin{split}
\langle \partial_t (\phi_{-(\cdot)}\bv), \widetilde\bxi \rangle & = \left(\phi_{-(\cdot)}\bv,\partial_t \widetilde\bxi\right)_{L^2(\cS_0)} = 
\left( \bA^{-1}\bv(\cdot,\Phi_t^n(\cdot))\,,\partial_t (\phi_{-(\cdot)}\bxi)\right)_{L^2(\cS_0)}\\
&
= 
\left( \bA^{-T}\bA^{-1}\bv(\cdot,\Phi_t^n(\cdot))\, ,  \bA\partial_t (\phi_{-(\cdot)}\bxi)\right)_{L^2(\cS_0)}\\
&
= 
\left(J^{-1}( \bA \bA^{T})^{-1}\bv\,,\partial^\ast\bxi\right)_{\bf 0}\\
&= 
\big(\bv\, ,\partial^\ast(\bT\bxi)- (\partial^\ast\bT)\bxi\big)_{\bf 0}\quad\text{with}~\bT:=J^{-1}( \bA \bA^T)^{-1}\\
&= 
-\langle\partial^\ast\bv\, ,\bT\bxi\rangle- \big(\bv,w_N\kappa\,\bT\bxi+ (\partial^\ast\bT)\bxi\big)_{\bf 0}.
\end{split}
\end{equation}
Note that $\bT:T\Gamma(t)\to T\Gamma(t)$ for all $t\in[0,T]$ and from Lemma~\ref{lemS} it follows that $\bT,\bT^{-1}\in C^1(
\bar\Gs)^{3\times3}$. Hence it holds 
\[
\bT\bxi\in L^2_{H^1}\quad\text{and}\quad \|\bT\bxi\|_{\bf 1}\simeq \|\bxi\|_{\bf 1}\simeq \|\widetilde\bxi\|_{L^2([0,T],\Vd(0))}.
\]
From this, equality~\eqref{aux516} and \eqref{VHeqv} one obtains after  simple calculations,
\[
\|\partial_t(\phi_{-(.)}\bv)\|_{L^2([0,T],\Vd(0)')}\le C (\|\partial^\ast\bv\|_{L_{H^{-1}}^2}+\|\bv\|_{\bf 0})
\le C(\|\partial^\ast\bv\|_{L_{\Vd'}^2}+\|\bv\|_{\bf 0})\le C\|\bv\|_{W_\ast}.
\]
The reverse estimate $\|\partial^\ast\bv\|_{L_{\Vd'}^2}\le C \|\bv\|_{\cW(\Vd(0),\Vd(0)')}$
follows from the identity 
\[
\langle\partial^\ast\bv\, ,\bxi\rangle
= -\langle \partial_t (\phi_{-(\cdot)}\bv), \phi_{-(\cdot)}\bT^{-1}\bxi \rangle - \big(\bv,w_N\kappa\,\bxi+ (\partial^\ast\bT)\bT^{-1}\bxi\big)_{\bf 0}
\]
by similar arguments (in particular the analogue result to \eqref{VHeqv} holds for the time derivative $\partial_t$ on $\Gs_0$). 
Therefore we proved $\|\bv\|_W\simeq\|\phi_{-(.)}\bv\|_{\cW(\Vd(0),\Vd(0)')}$ and hence $\bW(\Vd,\Vd')$ and $\cW(\Vd(0),\Vd(0)')$ are homeomorphic.
\quad\end{proof}

\section{Well-posed weak formulation}\label{sec4}
In this section we introduce and analyze a  weak formulation of TSNSE \eqref{NSalt}. 
We restrict our arguments to  the solenoidal case $g=0$.
The extension of the analysis to the case $g \neq 0$ is discussed in section~\ref{sectnonsolonoidal}.  
In the weak formulation we take a solution space with only solenoidal vector fields, and thus the pressure term vanishes. The existence of a corresponding unique pressure solution is shown in section~\ref{sectpressure}.

We introduce the notation 
\begin{equation} \label{not1} \begin{split}
 a(\bu,\bv) &: = 2\mu(E_s(\bu),E_s(\bv))_{\bf 0},~ ~  
 c(\bu, \tilde \bu, \bv) := ((\nablaG \bu) \tilde \bu, \bv)_{\bf 0},\\ 
 \ell(\bu,\bv)&  := (w_N \bH \bu,\bv)_{\bf 0},
\end{split}
\end{equation}
and consider the following \emph{weak formulation of TSNSE} \eqref{NSalt} with $g=0$: For given $\blf \in L^2(\cS)^3$, with  $\blf =\blf_T$,  $\bu_0 \in \Hd(0)$,
find $\bu_T \in \bW(\Vd,\Vd')$ such that $\bu_T(0)=\bu_0$ and 
\begin{equation} \label{NSvar}
  \langle \pc \bu_T,\bv\rangle + a(\bu_T,\bv)  +
  c(\bu_T,\bu_T, \bv)  + \ell(\bu_T,\bv) =(\blf,\bv)_{\bf 0} \quad \text{for all}~~ \bv \in L_{\Vd}^2.
\end{equation}
One easily checks that any smooth solution of \eqref{NSalt} satisfies \eqref{NSvar}.

For the analysis of the weak formulation \eqref{NSvar} we apply an established approach, e.g. \cite{Temam77}. Compared to the analysis of the non-stationary Navier-Stokes equations in Euclidean domains the main differences are that we use evolving spaces as introduced above instead of the standard Bochner ones, we have a normal time derivative $\pc$ in place of the usual $\frac{d}{dt}$, and an additional curvature-dependent term $(w_N \bH \bu_T,\bv)_{\bf 0}$. 
We show the existence of a Galerkin solution,  derive a-priori bounds and based on  this show existence of a solution $\bu_T$.  We then show uniqueness of the solution with the help of Ladyzhenskaya's inequality. 

\subsubsection*{Faedo--Galerkin approximation} The space $\Vd(0)$ has a countable basis $\bpsi_1,\bpsi_2, \ldots$, which is pushed forward to a countable basis $\{\tbpsi_i\}$ of $\Vd(t)$ by letting $\tbpsi_i=\phi_t\bpsi_i$. Consider
\begin{equation} \label{Ansatz} 
 \bu_m:= \sum_{i=1}^m g_{i,m}(t) \tbpsi_i. 
\end{equation}
We determine the unknown functions $g_{i,m}$ from \eqref{Ansatz} by considering the system of ODEs  
\begin{equation} \label{R2} \begin{split}
  (\pc \bu_m,  \tbpsi_j)_{0,t} &+ 2\mu(E_s(\bu_m),E_s(\tbpsi_j))_{0,t} + ((\nablaG\bu_m) \bu_m,  \tbpsi_j )_{0,t} \\ & + (w_N \bH\bu_m,\tbpsi_j)_{0,t}=(\blf,\tbpsi_j)_{0,t} \quad \text{for all}~1 \leq j \leq m .\\
 & \bu_m(0)= \bu_{0m},
 \end{split}
\end{equation}
Here $\bu_{0m}$ is the $L^2$-orthogonal projection of $\bu_0$ on ${\rm span}\{\bpsi_1, \ldots,\bpsi_m\}$. 

\subsubsection*{A priori bounds} Assume  $\bu_m$ as in \eqref{Ansatz} satisfies \eqref{R2}.  Multiplying \eqref{R2} by $g_{j,m}(t)$ and summing over $j=1,\ldots m$, we get, using $((\nablaG\bu_m) \bu_m,  \bu_m )_{0,t}=0$,
\begin{equation} \label{R3} 
	(\pc \bu_m,  \bu_m)_{0,t} +2\mu(E_s(\bu_m),E_s(\bu_m))_{0,t} + (w_N \bH\bu_m,\bu_m)_{0,t} =(\blf,\bu_m)_{0,t},
\end{equation}
and applying integration by parts \eqref{PIid} we have
\begin{equation} \label{R4} \begin{split} 
    &\frac{d}{dt} \|\bu_m\|_{0,t}^2 + 4\mu\|E_s(\bu_m)\|_{0,t}^2 \\ & = 
		- 2 (w_N \bH\bu_m,\bu_m)_{0,t} + (w_N \kappa \bu_m, \bu_m)_{0,t} +2(\blf,\bu_m)_{0,t}. 
\end{split} \end{equation}
From this we obtain for $0<\tau \leq T$,
\begin{multline} \label{R5} 
		\|\bu_m\|_{0,\tau}^2 + 4\mu \int_0^\tau\|E_s(\bu_m)\|_{0,t}^2 \, dt 
		 \lesssim \int_0^\tau \|\bu_m\|_{0,t}^2\, dt + \int_0^\tau \| \blf \|_{0,t}^2 \, dt + \|\bu_{0m}\|_{L^2(\Gamma_0)}^2.
 \end{multline}
Here and in the remainder we write $A\lesssim B$ to denote $A\le c\,B$ with some constant $c$ which may depend on the final time $T$,  the maximum normal velocity $\|w_N\|_{L^\infty(\Gs)}$ and on smoothness properties of the space-time manifold, quantified by  $\|\bH\|_{L^\infty(\Gs)}$. Note that $\|\kappa\|_{L^\infty(\Gs)}=\|\text{tr}(\bH)\|_{L^\infty(\Gs)}\le2\|\bH\|_{L^\infty(\Gs)}$. The Gronwall lemma  and \eqref{R5} yield the a priori bound,
\begin{equation} \label{apriori1}
	\max_{0 \leq t \leq T} \|\bu_m\|_{0,t} + \|E_s(\bu_m)\|_{\bf 0}  \lesssim \| \blf \|_{\bf 0}+\|\bu_{0}\|_{L^2(\Gamma_0)} .
\end{equation}

The uniform Korn inequality  and the estimates in \eqref{R5}-\eqref{apriori1} yield the a priori estimate
\begin{equation} \label{apriori2}
	\|\bu_m\|_{\bf 1} \lesssim \| \blf \|_{\bf 0}+ \|\bu_{0}\|_{L^2(\Gamma_0)}.
\end{equation}

\subsubsection*{Existence of solution} Consider the ODEs system \eqref{R2}.
Due to Lemma~\ref{lemrelation} we have $\bP\pc(\tbpsi_i)=\partial^\ast(\tbpsi_i)- \bC  \tbpsi_i=-\bC  \tbpsi_i $, with $\bC=\bA \bP \pc  \bA^{-1}$. Thus \eqref{R2}  results in the following system for $g_{i,m}$: 
\begin{multline} \label{aux686} 
\sum_{i=1}^m\frac{dg_{i,m}(t)}{dt} (\tbpsi_i,  \tbpsi_j)_{0,t}= - \sum_{i,k=1}^m g_{i,m}(t) g_{k,m}(t) ((\nablaG\tbpsi_k) \tbpsi_i,  \tbpsi_j )_{0,t} \\ -\sum_{i=1}^m g_{i,m}(t)\Big\{2\mu(E_s(\tbpsi_i),E_s(\tbpsi_j))_{0,t} + ((w_N \bH-\bC)\tbpsi_i,\tbpsi_j)_{0,t}\Big\}+(\blf,\tbpsi_j)_{0,t}, 
\end{multline}
for $1 \leq j \leq m$. From the fact that the pushforward map $\phi_t$ is one-to-one and linear for every $t$, and $\bpsi_i$ are linear independent we infer that $\tbpsi_i$ are linear independent for every $t$ and thus the matrix $\bM(t):=(\tbpsi_i(t),\tbpsi_j(t))_{1 \leq i,j\leq m}$ is invertible for $t\in[0,T]$. Moreover,  \eqref{smooth_est} and the definition of $\tbpsi$ implies $\bM\in C^1[0,T]^{m\times m}$. Since any eigenvalue of $\bM$, denoted by $\lambda(\bM)$,  continuously depends on matrix coefficients,  the bound $\lambda(\bM)>0$ for each $t\in[0,T]$ implies $\lambda(\bM)\ge c>0$ uniformly on $[0,T]$. The uniform lower bound for the eigenvalues and the symmetry of $\bM$ ensures $\|\bM^{-1}\|_{C[0,T]}\le C$. 
Multiplying both sides of \eqref{aux686} with $\bM^{-1}$, one verifies that the Picard-Lindel\"of theorem applies. Hence a unique solution $g_{i,m}(t)$, $1 \leq i \leq m$, exists for a maximal interval $[0,t_e]$, $t_e >0$. If $t_e < T$, then  $\lim_{t \uparrow t_e}\|\bu_m(t)\|_{0,t} = \infty$, which contradicts the established bound \eqref{apriori1} with $T$ replaced by $t_e$. Hence, a unique solution $\bu_m(t)$ exists for $t \in [0,T]$.
\smallskip

 From the a priori bounds \eqref{apriori1}, \eqref{apriori2}  it follows that there is a subsequence of $(\bu_{m'})_{m'\geq 1}$ of $(\bu_m)_{m\geq 1}$ that is weak-star convergent in $L_H^{\infty}$ and weakly convergent in $L_{\Vd}^2$ to $\bu^\ast \in 
L_H^{\infty} \cap L_{\Vd}^2$. Due to the compactness of $L_{\Vd}^2 \hookrightarrow L^2_{\Hd}$ this sequence also strongly converges in $L^2_{\Hd}$. Now note that, with $\tilde \bpsi_j$, $j=1,2,\ldots,$ as above, functions $\sum_{j=1}^N g_j(t) \tbpsi_j$, $N\in \Bbb{N}$, $g_j \in C^1([0,T])$, with $g_j(T)=0$, are dense in $L_{\Vd}^2$.
We multiply \eqref{R2} with such a function $g_j$, integrate over $[0,T]$ and apply partial integration \eqref{PIid}, which yields 
 \begin{align}
 & - \big(\bu_{m'}, \pc( \tbpsi_j g_j)\big)_{\bf 0} - (\kappa w_N \bu_{m'},  \tbpsi_j g_j)_{\bf 0}   \label{R6a}\\ & + 2\mu \big(E_s(\bu_{m'}),E_s(\tbpsi_j g_j)\big)_{\bf 0} + (\nablaG\bu_{m'} \bu_{m'},  \tbpsi_j g_j )_{\bf 0}  \label{R6b}\\ & + (w_N \bH\bu_{m'},\tbpsi_jg_j)_{\bf 0} =(\bu_{0m}, \bpsi_j g_j(0))_{L^2(\Gamma_0)} + (\blf,\tbpsi_j g_j)_{\bf 0} \label{R6c}
 \end{align}
 Due to the strong convergence $\bu_{m'} \to \bu^\ast$ in $L^2_{\Hd}$ we can pass to the limit in the two terms in \eqref{R6a} and the first term in \eqref{R6c}. 
Since $\big(E_s(\cdot),E_s(\bv)\big)_{\bf 0}$ is a functional on $L_{\Vd}^2$ for any $\bv \in L_{\Vd}^2$, we  can pass to the limit in the first term in \eqref{R6b}. Using the strong convergence in $L^2_{\Hd}$ we can also pass to the limit in the second term in \eqref{R6b}, cf. \cite[Lemma 3.2]{Temam77}. By definition of $\bu_{0m}$ we have $\bu_{0m} \to \bu(0)$ strongly in $L^2(\Gamma_0)$.
Thus we get, cf.~\eqref{not1},
\begin{multline} \label{R7} 
    -(\bu^\ast,\pc(\tbpsi_j g_j))_{\bf 0} - (\kappa w_N \bu^\ast,\tbpsi_j g_j)_{\bf 0}
    =- a(\bu^\ast,\tbpsi_j g_j) \\ 
     - c(\bu^\ast,\bu^\ast,\tbpsi_j g_j)  - \ell(\bu^\ast,\tbpsi_j g_j)+(\bu_{0}, \bpsi_j g_j(0))_{L^2(\Gamma_0)} + (\blf,\tbpsi_j g_j)_{\bf 0}. 
\end{multline}
We restrict to $g_j$ with $g_j(0)=0$ and build linear combinations of \eqref{R7} to arrive at
\begin{equation} \label{R8} 
	-(\bu^\ast,\pc \bv)_{\bf 0} - (\kappa w_N \bu^\ast,\bv)_{\bf 0}
	=- a(\bu^\ast,\bv)  
	- c(\bu^\ast,\bu^\ast,\bv)  - \ell(\bu^\ast,\bv)+ (\blf,\bv)_{\bf 0} 
\end{equation}
for all $\bv=\sum_{j=1}^N\tbpsi_j g_j$. 
We estimate the nonlinear term  with the help of uniform Ladyzhenskaya inequality and \eqref{apriori1}, \eqref{apriori2}:
\begin{equation}\label{aux812}
\begin{split}
	c(\bu^\ast,\bu^\ast,\bv)&=-c(\bv, \bu^\ast,\bu^\ast)\le \int_{0}^{T}\|\nablaG \bv\|_{0,t}\|\bu^\ast\|^2_{L^4(\Gamma(t))}dt 
	\\
	&\lesssim \int_{0}^{T}\| \bv\|_{1,t}\|\bu^\ast\|_{0,t}\|\bu^\ast\|_{1,t}dt\lesssim \sup_{t\in[0,T]}\|\bu^\ast\|_{0,t} \|\bu^\ast\|_{\bf 1}\|\bv\|_{\bf 1}\\
	&\lesssim (\| \blf \|_{\bf 0}+ \|\bu_{0}\|_{L^2(\Gamma_0)})^2\|\bv\|_{\bf 1}.
\end{split}
\end{equation}
Using the above estimate and obvious continuity estimates for other terms on the right hand side in \eqref{R8} together with a density argument we conclude that 
\begin{equation}\label{apriori3}
	\|\pc \bu^\ast\|_{L_{\Vd'}^2}\lesssim F(1+F),\quad\text{with}~~  F:=\| \blf \|_{\bf 0}+ \|\bu_{0}\|_{L^2(\Gamma_0)},
\end{equation}
 hence $\bu^\ast \in \bW(\Vd,\Vd')$ and furthermore $\bu_T=\bu^\ast$ satisfies \eqref{NSvar}. 

To check that $\bu^\ast(0)=\bu_0$ holds, we apply standard arguments. Using continuity of $t \to \|\bv(t)\|_{0,t}$ for $\bv \in \bW(\Vd,\Vd')$ (\cite[Theorem2.40]{alphonse}) and density of smooth functions in $\bW(\Vd,\Vd')$ it follows that the partial integration rule \eqref{PIid} can be generalized to $\bW(\Vd,\Vd')$. Test \eqref{NSvar} with $\bv=\tbpsi_j g_j(t)$,  with $g_j(0)=1$, applying partial integration and comparing the result with \eqref{R7} we obtain $(\bu^\ast, \bpsi_j)_{L^2(\Gamma_0)}=(\bu_{0}, \bpsi_j)_{L^2(\Gamma_0)}$. Since $(\bpsi_j)_{j \in \Bbb{N}}$ is dense in $\Hd(0)$ we conclude that $\bu^\ast(0)=\bu_0$ holds.

\subsubsection*{Uniqueness of solution} We prove uniqueness of the solution using essentially the same arguments as in Euclidean space.
For the sake of presentation below, we use $\llangle\cdot,\cdot\rrangle$ to denote $L^2$ duality pairing between $\Vd(t)$ and $\Vd(t)'$   and introduce the notation for $t$-level bilinear forms, cf. \eqref{not1}:
\begin{align*}
 a_t(\bv,\bpsi) : = 2\mu(E_s(\bv),E_s(\bpsi))_{0,t}, ~  
 c_t(\bv, \tilde \bv, \bpsi) := ((\nablaG \bv) \tilde \bv, \bpsi)_{0,t},~
 \ell_t(\bv,\bpsi) := (w_N \bH \bv,\bpsi)_{0,t}.
\end{align*}
Note that $c_t(\bv, \tilde \bv, \bv)=0$ holds.
A solution $\bu_T \in \bW(\Vd,\Vd')$ of \eqref{NSvar} satisfies
\begin{multline} \label{NSvarloc}  \llangle \pc \bu_T(t),\bv(t)\rrangle + a_t(\bu_T(t),\bv(t)) + c_t(\bu_T(t),  \bu_T(t), \bv(t)) \\ + \ell_t(\bu_T(t),\bv(t))= (\blf(t),\bv(t))_{0,t} \quad \text{a.e. in $t$}~~\text{for}~\bv \in L_{\Vd}^2.
\end{multline}
The Leibniz rule \eqref{PI1} extends to  $\bv \in \bW(\Vd,\Vd')$ (cf. \cite[Lemma 3.4]{alphonse})  yielding
\[
 \frac{d}{dt} (\bv(t),\bv(t))_{0,t} = 2\llangle \pc \bv(t), \bv(t)\rrangle + \int_{\Gamma(t)} |\bv(t)|^2 w_N \kappa \, ds.    
\]
 Let  $\bu_T^1$, $\bu_T^2$ be solutions of \eqref{NSvar}  with $\bu_T^1(0)=\bu_T^2(0)=\bu_0$. Letting  $\bpsi:=\bu_T^1-\bu_T^2$ and using $c_t(\bpsi,\bu_T^1,\bpsi)=0$ we compute, with $C_1:=\|w_N \kappa\|_{C(\cS)}$, 
\begin{align*}
  \frac{d}{dt} \|\bpsi(t)\|_{0,t}^2 &+ 2 a_t(\bpsi(t),\bpsi(t)) \\ & =2\llangle \pc \bpsi(t), \bpsi(t)\rrangle + \int_{\Gamma(t)} |\bpsi(t)|^2 w_N \kappa \, ds + 2 a_t(\bpsi(t),\bpsi(t)) \\
 & \leq \phantom{-} 2 \llangle \pc \bu_T^1(t), \bpsi(t)\rrangle +2a_t(\bu_T^1(t),\bpsi(t)) \\ & \quad - 
   2 \llangle \pc \bu_T^2(t), \bpsi(t)\rrangle -2a_t(\bu_T^2(t),\bpsi(t)) + C_1 \|\bpsi(t)\|_{0,t}^2  \\
   & = -2 c_t(\bu_T^1(t),  \bu_T^1(t), \bpsi(t)) -2  \ell_t(\bu_T^1(t),\bpsi(t)) \\
   &  \quad +2 c_t(\bu_T^2(t),  \bu_T^2(t), \bpsi(t)) -2 \ell_t(\bu_T^2(t),\bpsi(t))+ C_1 \|\bpsi(t)\|_{0,t}^2  \\
   & = - 2 c_t(\bu_T^2(t), \bpsi(t),\bpsi(t)) - 2\ell_t(\bpsi(t),\bpsi(t)) + C_1 \|\bpsi(t)\|_{0,t}^2. 
\end{align*}
We have $|\ell_t(\bpsi(t),\bpsi(t))|\leq  \|w_N \bH\|_{C(\cS)}\|\bpsi(t)\|_{0,t}^2$. For the other terms on the right hand side above we use \eqref{Lady} and the Korn inequality \eqref{Korn}  to estimate
\begin{align*}
 \frac{d}{dt} \|\bpsi(t)\|_{0,t}^2 &+ 2 a_t(\bpsi(t),\bpsi(t)) \\ & \leq 
 C \|\bu_T^2\|_{1,t} \|\bpsi(t)\|_{L^4(\Gamma(t))}^2 +  C \|\bpsi(t)\|_{0,t}^2 \\
 & \leq C \|\bu_T^2\|_{1,t}\|\bpsi(t)\|_{0,t}\|\bpsi\|_{1,t} + C \|\bpsi(t)\|_{0,t}^2 \\
  & \leq C \|\bu_T^2\|_{1,t}\|\bpsi(t)\|_{0,t} \big( \|\bpsi(t)\|_{0,t} +a_t(\bpsi(t),\bpsi(t))^\frac12 \big)+ C \|\bpsi(t)\|_{0,t}^2 \\
  & \leq C \big(\|\bu_T^2\|_{1,t}+\|\bu_T^2\|_{1,t}^2 \big)\|\bpsi(t)\|_{0,t}^2 +  2 a_t(\bpsi(t),\bpsi(t)),
\end{align*}
with a suitable constant $C$ independent of $t \in [0,T]$ and of $\bu_T^1,\, \bu_T^2$. 
Thus we get 
\[
  \frac{d}{dt} \|\bpsi(t)\|_{0,t}^2 \leq f_u(t)\|\bpsi(t)\|_{0,t}^2,  \quad f_u(t):= C \big(\|\bu_T^2\|_{1,t}+\|\bu_T^2\|_{1,t}^2\big). 
\]
Now, $\bu_T^2\in L_{\Vd}^2$ implies that $\int_0^T f_u(s)\,ds$ is bounded and so the Gronwall inequality together with 
    $\|\bpsi(0)\|_{L^2(\Gamma_0)}=0$ yields $\bpsi(t)=0$ for $t\in [0,T]$ and thus the uniqueness result holds. 
%

Summarizing we proved  the following main well-posedness result.
\begin{theorem}  The weak formulation \eqref{NSvar}  of the TSNSE  has a unique solution $\bu_T\in \bW(\Vd,\Vd')$. The solution satisfies
\begin{equation}\label{est:apriori}
	\|\bu_T\|_W\le C (1+F)F, \quad\text{with}~~  F:=\| \blf \|_{\bf 0}+ \|\bu_{0}\|_{L^2(\Gamma_0)}.
\end{equation}
\end{theorem}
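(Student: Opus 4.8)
The plan is to follow the classical Faedo--Galerkin strategy for the non-stationary Navier--Stokes equations in Euclidean domains (as in \cite{Temam77}), transplanted to the evolving-surface setting. The three structural novelties---the evolving spaces $L_{\Vd}^2$, $L^2_{\Hd}$, the normal time derivative $\pc$ in place of $\partial_t$, and the zeroth-order curvature term $(w_N \bH \bu_T, \bv)_{\bf 0}$---are absorbed by the preliminary machinery: the uniform Korn, Ladyzhenskaya and inf-sup inequalities, the relation between $\pc$ and $\partial^\ast$ in Lemma~\ref{lemrelation}, and the characterization of the solution space in Lemma~\ref{Lem:W}. I would organize the argument as: (i) construction of Galerkin approximations and their global existence; (ii) uniform a priori bounds; (iii) passage to the limit; (iv) uniqueness.

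For existence, I would first push forward a countable basis $\{\bpsi_i\}$ of $\Vd(0)$ by the Piola map to obtain $\tbpsi_i = \phi_t \bpsi_i$, a basis of $\Vd(t)$, and seek $\bu_m = \sum_i g_{i,m}(t)\tbpsi_i$ solving the finite-dimensional projection of \eqref{NSvar}. The normal time derivative of the time-dependent basis functions is handled through Lemma~\ref{lemrelation}, which replaces $\bP\pc\tbpsi_i$ by $-\bC\tbpsi_i$ with $\bC = \bA\bP(\pc\bA^{-1})$; continuity of $\bC$ follows from Lemma~\ref{lemS}. The resulting ODE system has a mass matrix $\bM(t)$ that is symmetric, $C^1$ in $t$, and has eigenvalues bounded below uniformly on $[0,T]$, so $\bM^{-1}$ is uniformly bounded and Picard--Lindel\"of gives a local solution. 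Global existence on $[0,T]$ then follows once the energy bound is in hand. That bound comes from testing the Galerkin system with $\bu_m$ itself: the trilinear convection term vanishes, and the integration-by-parts identity \eqref{PIid} converts $(\pc\bu_m,\bu_m)_{0,t}$ into $\tfrac{d}{dt}\|\bu_m\|_{0,t}^2$ up to curvature terms; uniform Korn (Lemma~\ref{Propkorn}) and Gronwall then yield $L^\infty_{\Hd}$ and $L_{\Vd}^2$ bounds controlled by $F$.

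Next I would extract a subsequence converging weakly-$\ast$ in $L^\infty_{\Hd}$ and weakly in $L_{\Vd}^2$ to some $\bu^\ast$, and---crucially---strongly in $L^2_{\Hd}$ thanks to the compact embedding $L_{\Vd}^2 \hookrightarrow L^2_{\Hd}$. Passing to the limit in the linear terms is routine; the delicate point is the quadratic term, which requires the strong $L^2_{\Hd}$ convergence to identify the limit (as in \cite[Lemma~3.2]{Temam77}). To show $\bu^\ast \in \bW(\Vd,\Vd')$ I would bound $\|\pc\bu^\ast\|_{L^2_{\Vd'}}$, estimating the convection term by the uniform Ladyzhenskaya inequality \eqref{Lady}; here the Helmholtz splitting of test fields (Lemma~\ref{LemmaHelm}) and the equivalence \eqref{VHeqv} let me reduce to solenoidal test functions. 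The initial condition $\bu^\ast(0)=\bu_0$ is recovered from the continuity of $t\mapsto\|\bv(t)\|_{0,t}$ on $\bW(\Vd,\Vd')$ (Lemma~\ref{Lem:W}) together with the integration-by-parts rule \eqref{PIid} extended to $\bW$.

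Finally, for uniqueness I would subtract two solutions, set $\bpsi = \bu_T^1 - \bu_T^2$, and apply the Leibniz rule on $\bW(\Vd,\Vd')$. Using $c_t(\bpsi,\bu_T^1,\bpsi)=0$ reduces the nonlinearity to $c_t(\bu_T^2,\bpsi,\bpsi)$, which I would control by Ladyzhenskaya \eqref{Lady} and Korn \eqref{Korn}, absorbing the viscous term, to arrive at $\tfrac{d}{dt}\|\bpsi\|_{0,t}^2 \le f_u(t)\|\bpsi\|_{0,t}^2$ with $f_u \in L^1(0,T)$; Gronwall with $\bpsi(0)=0$ then forces $\bpsi\equiv 0$. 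The quantitative bound \eqref{est:apriori} is read off by combining the energy estimate with the $L^2_{\Vd'}$ bound on $\pc\bu^\ast$. I expect the main obstacle to be the limit passage in the convection term: it rests entirely on the Aubin--Lions-type compactness built into the evolving-space framework of \cite{alphonse} and on the \emph{uniform-in-$t$} Ladyzhenskaya inequality, both of which are precisely what the preliminary section was designed to supply.
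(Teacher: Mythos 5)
Your proposal is correct and follows essentially the same route as the paper: Faedo--Galerkin with the Piola-pushed-forward basis, the $\bP\pc\tbpsi_i=-\bC\tbpsi_i$ reduction via Lemma~\ref{lemrelation}, energy estimates with uniform Korn and Gronwall, compactness in $L^2_{\Hd}$ for the limit passage in the convection term, the $L^2_{\Vd'}$ bound on $\pc\bu^\ast$ via uniform Ladyzhenskaya, and the standard subtraction-plus-Gronwall uniqueness argument. The only slight misplacement is invoking the Helmholtz splitting (Lemma~\ref{LemmaHelm}) and \eqref{VHeqv} when bounding $\|\pc\bu^\ast\|_{L^2_{\Vd'}}$ --- since that dual norm is taken over solenoidal test functions by definition, no such reduction is needed there; the paper reserves those tools for the pressure reconstruction.
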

%
\subsection{Surface pressure} \label{sectpressure} 
For $\bv\in L^2_{\Vd}$, \eqref{weak} defines  $\pc \bv$ as a functional on $\widehat{\cD}_0$.
The density of $\widehat{\cD}_0$ in $L^2_{H^1}$ and the density of ${\cD}_0\subset\widehat{\cD}_0$ in $L^2_{\Vd}$ is used to define the bounded linear functionals  $\pc \bv\in L^2_{H^{-1}}$ and $\pc \bv\in L^2_{\Vd'}$, respectively.
The following equivalence holds:
\begin{equation}\label{VHeqv2}
	\pc \bv\in L^2_{\Vd'}\quad\Longleftrightarrow\quad \pc \bv\in L^2_{H^{-1}}, \quad \bv\in L^2_{\Vd}.
\end{equation}
Implication ``$\Leftarrow$'' in  \eqref{VHeqv2} is trivial since $\Vd\subset H^{1}$. The  ``$\Rightarrow$'' implication  follows from \eqref{VHeqv} and Lemma~\ref{lemrelation}.

Below we introduce a weak formulation of  TSNSE  on the velocity space 
\[
\bW(H^1,H^{-1})=\{\bv\in L^2_{H^1}\,:\, \pc\bv\in  L^2_{H^{-1}}\}\quad\text{with}~
(\cdot,\cdot)_{W(H^1,H^{-1})}=(\cdot,\cdot)_{\bf 1}+(\cdot,\cdot)_{L^2_{H^{-1}}},
\]
 with a pressure unknown $\pi \in L^2(\cS)$. One checks that $\bW(H^1,H^{-1})$ is a Hilbert space by the same arguments as for $\bW(\Vd,\Vd')$.
Consider the following mixed formulation of  TSNSE, which relates to the well-posed weak formulation \eqref{NSvar}: For given $\blf \in L^2(\cS)^3$, with  $\blf =\blf_T$,  $\bu_0 \in \Hd(0)$,
find $\bu_T \in \bW(H^1,H^{-1})$ and $\pi \in L^2(\cS)$,  with $\int_{\Gamma(t)} \pi \, ds =0$ a.e. $t \in [0,T]$, such that $\bu_T(0)=\bu_0$ and 
\begin{equation} \label{NSvar2} \begin{split}
  \langle \pc \bu_T,\bv\rangle + a(\bu_T,\bv)  +
  c(\bu_T,\bu_T, \bv)  + \ell(\bu_T,\bv) + (\pi, \divG \bv)_{\bf 0} & =(\blf,\bv)_{\bf 0}  \\
   (q, \divG \bu_T)_{\bf 0} & = 0
\end{split}\end{equation}
for all $\bv \in L_{H^1}^2$, $q \in L^2(\cS)$.
\smallskip

\begin{theorem}
 The problem \eqref{NSvar2} has a unique solution $(\bu_T,\pi)$. The velocity solution $\bu_T$ is also the unique solution of \eqref{NSvar}. Furthermore, with $F:=\| \blf \|_{\bf 0}+ \|\bu_{0}\|_{L^2(\Gamma_0)} $ the following estimate holds
 \begin{align} \|\bu_T\|_W +\|\pi\|_{\bf 0} \le C (1+F)F. \label{RR1}
 \end{align}
\end{theorem}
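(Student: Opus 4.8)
The plan is to deduce well-posedness of the mixed problem \eqref{NSvar2} from the already established well-posedness of the solenoidal formulation \eqref{NSvar}, recovering the pressure from the uniform inf-sup condition of Lemma~\ref{leminfsup}. Let $\bu_T \in \bW(\Vd,\Vd')$ be the unique solution of \eqref{NSvar} guaranteed by the previous theorem. Since $\Vd \subset H^1$ and, by the equivalence \eqref{VHeqv2}, $\pc \bu_T \in L^2_{\Vd'}$ implies $\pc \bu_T \in L^2_{H^{-1}}$, this $\bu_T$ already belongs to $\bW(H^1,H^{-1})$. Because $\bu_T \in L_{\Vd}^2$ is solenoidal, the second (divergence) equation in \eqref{NSvar2} holds trivially for every $q \in L^2(\cS)$, so it only remains to produce a pressure $\pi$ realizing the first equation on the enlarged, non-solenoidal test space $L_{H^1}^2$.

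To this end I would introduce the residual functional $\cR$ on $L_{H^1}^2$,
\[
  \cR(\bv) := \langle \pc \bu_T, \bv\rangle + a(\bu_T,\bv) + c(\bu_T,\bu_T,\bv) + \ell(\bu_T,\bv) - (\blf,\bv)_{\bf 0}.
\]
Its boundedness follows termwise: the duality term is controlled by $\|\pc \bu_T\|_{L^2_{H^{-1}}}\|\bv\|_{\bf 1}$ thanks to \eqref{VHeqv2}, the forms $a$ and $\ell$ by the Korn inequality \eqref{Korn} together with the uniform bounds on $\bH$ and $w_N$, and the trilinear term by the Ladyzhenskaya estimate exactly as in \eqref{aux812}; altogether $\|\cR\|_{(L_{H^1}^2)'} \lesssim (1+F)F$. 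Restricting $\bv$ to solenoidal fields $\bv \in L_{\Vd}^2$, the weak formulation \eqref{NSvar} shows $\cR(\bv)=0$, i.e.\ $\cR$ annihilates the divergence-free subspace $L_{\Vd}^2$.

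The key analytic step is to pass from the pointwise inf-sup of Lemma~\ref{leminfsup} to an integrated space-time inf-sup: for every $\pi \in L^2(\cS)$ with $\int_{\Gamma(t)}\pi\,ds=0$ a.e.,
\[
  \sup_{0\neq \bv \in L_{H^1}^2} \frac{(\pi, \divG\bv)_{\bf 0}}{\|\bv\|_{\bf 1}} \ge c\,\|\pi\|_{\bf 0},
\]
with $c>0$ obtained by choosing, measurably in $t$, near-optimal test fields for the $t$-level inequality and integrating. With this in hand the abstract de Rham / closed-range argument of saddle-point theory applies to $\cR$, which vanishes on $\ker(\divG)$: there is a unique mean-zero $\pi \in L^2(\cS)$ with $\cR(\bv) = -(\pi, \divG\bv)_{\bf 0}$ for all $\bv \in L_{H^1}^2$, and $\|\pi\|_{\bf 0} \le c^{-1}\|\cR\|_{(L_{H^1}^2)'} \lesssim (1+F)F$. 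This is precisely the first equation of \eqref{NSvar2}, so $(\bu_T,\pi)$ is a solution, and combining with the bound \eqref{est:apriori} for $\|\bu_T\|_W$ yields \eqref{RR1}.

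For uniqueness, let $(\bu_T,\pi)$ be any solution of \eqref{NSvar2}. Testing the second equation against all $q \in L^2(\cS)$ forces $\divG \bu_T = 0$, so $\bu_T \in L_{\Vd}^2$ and, again by \eqref{VHeqv2}, $\bu_T \in \bW(\Vd,\Vd')$; restricting the first equation to $\bv \in L_{\Vd}^2$ eliminates the pressure term and shows $\bu_T$ solves \eqref{NSvar}, hence coincides with the unique velocity of the previous theorem. The pressure is then pinned down uniquely by $-(\pi,\divG\bv)_{\bf 0}=\cR(\bv)$ and the space-time inf-sup, since two mean-zero pressures with identical right-hand side must agree. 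The main obstacle I anticipate is exactly this de Rham step in the evolving-space setting: lifting the pointwise inf-sup of Lemma~\ref{leminfsup} to a uniform space-time inf-sup with a measurable selection of test fields, and controlling the normal-time-derivative contribution $\langle \pc \bu_T, \bv\rangle$ on the non-solenoidal test space through \eqref{VHeqv2}.
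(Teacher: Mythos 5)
Your proposal is correct and follows the same overall strategy as the paper: take the unique velocity solution of \eqref{NSvar}, form the residual functional on $L^2_{H^1}$, bound it using \eqref{VHeqv2} and the Ladyzhenskaya argument of \eqref{aux812}, recover the pressure by a de Rham/closed-range argument, and obtain uniqueness by restricting to solenoidal test functions. The one place where you genuinely diverge is the de Rham step itself. You propose to lift the $t$-level inf-sup of Lemma~\ref{leminfsup} to an integrated space-time inf-sup over $L^2_{H^1}$ via a measurable selection of near-optimal test fields, and then apply the closed-range theorem once at the space-time level; the paper instead applies the closed-range argument pointwise for a.e.\ $t$ (so that $\widetilde{\blf}(t)=\nabla_\Gamma\pi(t)$ with $\int_{\Gamma(t)}\pi(t)\,ds=0$), and then separately establishes measurability of $t\mapsto\|\pi(t)\|_{0,t}$ by representing $\|\pi(t)\|_{0,t}^2=\int_{\Gamma(t)}\nabla_\Gamma\phi(t)\cdot\widetilde{\blf}(t)\,ds$ with $-\Delta_\Gamma\phi(t)=\pi(t)$, before integrating the pointwise bound from Lemma~\ref{leminfsup}. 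The two routes carry the same technical burden in different places: your approach concentrates it in the measurable selection needed for the space-time inf-sup (which you correctly flag as the main obstacle, and which can be discharged by exactly the same Laplace--Beltrami construction, taking $\bv(t)=-\nabla_\Gamma\phi(t)$ with uniform-in-$t$ $H^2$ regularity), while the paper's pointwise route needs the separate measurability check. Your version is arguably cleaner as an abstract saddle-point statement, but without spelling out the measurable selection it is no more complete than the paper's; with it supplied, both yield \eqref{RR1} with the same constant structure $C(1+F)F$.
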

\begin{proof}
 Let $\bu_T \in \bW(\Vd,\Vd')$ be the solution of \eqref{NSvar}. Define
 \[
 \widetilde{\blf}(\bv):= \langle \pc \bu_T,\bv\rangle + a(\bu_T,\bv)  +
  c(\bu_T,\bu_T, \bv)  + \ell(\bu_T,\bv) - (\blf,\bv)_{\bf 0}, \quad \bv \in L_{H^1}^2.
 \]
Using \eqref{VHeqv2} and straightforward estimates we obtain $\widetilde{\blf} \in L^2_{H^{-1}}$.\footnote{To see $\bv \to c(\bu_T,\bu_T, \bv) \in L^2_{H^{-1}}$, one uses the same arguments
as in \eqref{aux812}.} 
We use the standard argument (e.g. Remark~I.1.9 in \cite{Temam77}) that for every $t\in[0,T]$ estimate \eqref{infsup} implies that 
 $\nabla_\Gamma\in\mathcal{L}(L^2(\Gamma(t)),H^{-1}(t))$ has a closed range $R(\nabla_\Gamma)$ in $H^{-1}(t)$ and so 
 \[
 R(\nabla_\Gamma) = \ker(\nabla_\Gamma^\ast)^\perp,\quad\text{with}~\ker(\nabla_\Gamma^\ast)=\Vd(t).
 \]
Note that  $\widetilde{\blf}(t)$ is an element of $H^{-1}(t)$ for a.e. $t\in[0,T]$ and, since $\bu_T$ is the solution of \eqref{NSvar},
$\llangle \widetilde{\blf}(t),\bv\rrangle=0$ for all $\bv\in \Vd(t)$.  Hence,  $\widetilde{\blf}(t)\in R(\nabla_\Gamma)$ which means
\[
\widetilde{\blf}(t)=\nabla_\Gamma\pi(t)\quad \text{for some}~\pi(t)\in L^2(\Gamma(t))~~\text{for a.e.}~t\in[0,T].
\]
We take $\pi(t)$ such that $\int_{\Gamma(t)}\pi(t)=0$ holds. To see that $t\to\|\pi(t)\|_{0,t}$ is  measurable, we note the following. Let $\phi= \phi(t) \in H^2(\Gamma(t))$, with $\int_{\Gamma(t)} \phi \, ds =0$ be the solution of the Laplace-Beltrami equation $-\Delta_{\Gamma} \phi = \pi(t)$ on $\Gamma(t)$.  We then have $\|\pi(t)\|_{0,t}^2= \int_{\Gamma(t)} \nabla_{\Gamma} \phi(t) \cdot \nabla_{\Gamma} \pi(t) \, ds = \int_{\Gamma(t)}\nabla_{\Gamma} 
\phi(t) \cdot\widetilde{\blf}(t)\, ds$. From $\widetilde{\blf} \in L^2_{H^{-1}}$ and $\nabla_{\Gamma} \phi(t) \in H^1(t)$ we conclude that $t\to\|\pi(t)\|_{0,t}$ is  measurable. 
From \eqref{NSvar2} we get, with notation as in \eqref{not1},
\begin{align*} (\pi, \divG \bv)_{0,t} = &(\blf(t),\bv(t))_{0,t}-\llangle \pc \bu_T(t),\bv(t)\rrangle   - a_t(\bu_T(t),\bv(t)) \\ & - c_t(\bu_T(t),  \bu_T(t), \bv(t)) -  \ell_t(\bu_T(t),\bv(t))  \quad \text{a.e. in $t$}~~\text{for}~\bv \in L_{\Vd}^2.
 \end{align*}
Using the uniform inf-sup estimate, cf. Lemma~\ref{leminfsup}, we get
\[
 \|\pi(t)\|_{0,t} \leq C \big( \|\blf(t)\|_{0,t} +\|\pc \bu_T\|_{\Vd(t)'}+ \|\bu_T(t)\|_{1,t}(1+\|\bu_T(t)\|_{0,t})\big),
\]
with a constant $C$ independent of $t$. 
Hence, $\pi \in L^2(\cS)$ holds.  The estimate for velocity in  \eqref{RR1} is the same as in Theorem~\ref{est:apriori}.
Note that $\max_{t \in [0,T]} \|\bu_T(t)\|_{0,t} \lesssim F$ holds, cf. \eqref{apriori1}. Using this and the velocity estimate we obtain the bound for the pressure in \eqref{RR1}. Uniqueness of $\bu_T$ follows by restricting to $\bv \in L_{\Vd}^2$ in \eqref{NSvar2} and using the fact that \eqref{NSvar} has a unique solution. Uniqueness of $\pi$ is easily derived using the inf-sup property.
\end{proof}

\subsection{Energy balance} \label{sectenergy}
Multiplying \eqref{NSalt} by $\bu_T$, integrating over $\Gamma(t)$ and using \eqref{PI1}, we obtain, for a  smooth solution,   the  energy balance of the system  at any $t\in(0,T)$, 
\begin{equation}\label{eq:Energy}
		\frac12\frac{d}{dt} \|\bu_T\|_{0,t}^2 + 2\mu\|E_s(\bu_T)\|_{0,t}^2 +  (w_N (\bH-\tfrac12\kappa\bI)\bu_T,\bu_T)_{0,t} = (\blf,\bu_T)_{0,t}.
\end{equation}   
Next we comment on the contribution of the third term in \eqref{eq:Energy}, which appears if the surface is both evolving and non-flat.
First we note that $\bH-\tfrac12\kappa\bI=\bH-\tfrac12\kappa\bP$ on $T\Gamma(t)$ and  $\bH-\tfrac12\kappa\bP=\bH-\tfrac12\mbox{tr}(\bH)\bP$. Since $\mbox{tr}(\bP)=2$ we get  $\mbox{tr}(\bH-\tfrac12\kappa\bP)=0$. This implies that the symmetric  matrix $\bH-\tfrac12\kappa\bP$
has real eigenvalues $\{0,\lambda, -\lambda\}$ with the $0$ eigenvalue corresponding to vectors normal to $\Gamma(t)$. Take a fixed point $\bx$ on the surface $\Gamma(t)$  with $w_N(\bx) \neq 0$.
Denote by $\kappa_1$ and $\kappa_2$ the two principle curvatures of $\Gamma(t)$. For the eigenvalue of $\bH(\bx)-\tfrac12\kappa(\bx)\bP(\bx)$ 
 we have $\lambda(\bx)=\tfrac12\big(\kappa_1(\bx)-\kappa_2(\bx)\big)$. Therefore $w_N(\bx) (\bH(\bx)-\tfrac12\kappa(\bx)\bP(\bx))=0$ iff $\kappa_1(\bx)=\kappa_2(\bx)$ holds,  and it is indefinite otherwise. In the latter case increase or decrease
of kinetic  energy due to this term depends on the alignment of the flow with the principle  directions and the sign of $w_N$.  

\subsection{Non-solonoidal solution} \label{sectnonsolonoidal}
The tangential surface Navier--Stokes system \eqref{NSalt} admits non-solonoidal solutions with $\divG\bu_T=g$, where $g=-w_N\kappa$, $\int_{\Gamma(t)} g\,ds=0$ for $t\in[0,T]$, is defined by the surface geometry and evolution.  We outline how the analysis for the solonoidal case presented above can be extended to such a problem. We assume that $g:\, \Gs \to \Bbb{R} $ is sufficiently regular. Let $\phi(t,\bx)$ be the unique solution of the Laplace-Beltrami equation $ \Delta_{\Gamma(t)} \phi = g$, $\int_{\Gamma(t)} \phi \, ds =0$,  and define $\widetilde \bu_T:= \nablaG \phi$. Then $ (\bu_T, \pi)$ solves  \eqref{NSalt} iff $\bhu_T=\bu_T-\widetilde \bu_T$ and $\pi$ solve the 
system 
\begin{equation}\label{NSaltg}
	\left\{\begin{aligned}
		\bP \pc\bhu_T +  (\nablaG \bhu_T) \bhu_T+  (\nablaG \widetilde \bu_T) \bhu_T+  (\nablaG \bhu_T) \widetilde \bu_T \hskip5ex &
		\\
		+ w_N \bH \bhu_T- 2\mu  \bP \divG E_s(\bhu_T) +\nabla_\Gamma\pi&=\widehat{\blf} \\
		\divG \bhu_T   &= 0,
	\end{aligned}\right.
\end{equation} 
with
\[
\widehat{\blf}=\blf - \big(\bP \pc\widetilde \bu_T +  (\nablaG \widetilde \bu_T) \widetilde \bu_T+ w_N \bH \widetilde \bu_T- 2\mu  \bP \divG E_s(\widetilde \bu_T)\big).
\]
The two additional  terms $(\nablaG \widetilde \bu_T) \bhu_T$ and  $(\nablaG \bhu_T) \widetilde \bu_T$ in the momentum equation in \eqref{NSaltg} are linear with respect to the unknown velocity field $\bhu_T$ and  can be treated very similar to the zero order term $w_N \bH \bhu_T$. 
The necessary regularity of $\widehat{\blf}$ can be established using the smoothness of $g$ and $\Gs$. We skip working out further details.

\section{Discretization method} \label{s:discretization}
As discussed in the introduction, only very few papers are available in which finite element discretization methods for vector- or tensor valued surface PDEs, such as the surface Navier-Stokes equations, on evolving surfaces are treated.
In this section we present a discretization  method for the TSNSE \eqref{NSalt}. The method is based on a combination of an implicit time stepping scheme with a TraceFEM for discretization in space. 
 The general idea behind TraceFEM is to use standard time-independent (bulk) finite element spaces  to approximate surface quantities.  The method is based on tangential calculus in the ambient  space $\mathbb{R}^3 \supset \Gamma(t)$. For scalar PDEs on evolving surfaces, a space--time  variant of TraceFEM is treated in \cite{olshanskii2014eulerian}. A  finite difference (FD) in time -- FEM in space variant for PDEs on time-dependent surfaces is treated in  \cite{lehrenfeld2018stabilized} (scalar problems) and in time-dependent volumetric domains in \cite{LehrenfeldOlshanskii2019} (scalar equations) and \cite{Lehrenfeldetal2021} (Stokes problem). Compared to the space--time variant the FD--FEM approach is more flexible in terms of implementation and the choice of elements. Below we explain this FD--FEM approach applied to the TSNSE. We start with the numerical treatment of the system's evolution in time.

\subsection{Time-stepping scheme}
Consider uniformly distributed time nodes  $t_n=n\Delta t$, $n=0,\dots,N$, with the  time step $\Delta t=T/N$. We assume that the time step $\Delta t$ is sufficiently small such that
\begin{equation}\label{ass1}
	\Gamma(t_n)\subset\mathcal{O}( \Gamma(t_{n-1})),\quad~n=1,\dots,N, ~ 
\end{equation}
with $\mathcal{O}( \Gamma(t))$ a neighborhood of $ \Gamma(t)$ where a smooth  extension  of surface quantities on $\Gamma(t)$  is well defined.  Assuming a smooth extension $\bu_T^e$,  we rewrite the normal time derivative $\pc$ used in \eqref{NSalt} in terms of standard time and space derivatives:
\begin{equation} \label{idpc}
	\bP \pc \bu_T +(\gradG \bu_T) \bu_T = \bP\big( \frac{\partial \bu_T^e}{\partial t} + (\nabla \bu_T^e) \bw_N + (\nabla \bu_T^e) \bu_T\big) = \bP\big( \frac{\partial \bu_T^e}{\partial t} +  (\nabla \bu_T^e) (\bw_N+\bu_T) \big).
\end{equation}
On $\Gamma(t_n)$ the time derivative term is approximated by
\[
\bP \frac{ \partial \bu_T^e}{\partial t} \approx \frac{\bu_T(t_n)- \bP(t_n) \bu_T(t_{n-1})^e}{\Delta t}.
\]
Note that due to \eqref{ass1} $\bu_T(t_{n-1})^e$ is defined on $\Gamma(t_n)$. 
The normal surface velocity  $\bw_N$ is known, so a natural linearization  of the nonlinear term in \eqref{idpc}    is given by 
\[
\bP \nabla \bu_T^e (\bw_N+\bu_T) \approx \bP(t_n)  \nabla \bu_T(t_n)^e (\bw_N(t_n) + \bu_T(t_{n-1})^e)\quad \text{on} ~\Gamma(t_n).
\]
The finite difference approximations above need extensions of quantities defined on $\Gamma(t_j)$ to $\mathcal{O}( \Gamma(t_{j}))$.  It  is natural to consider a normal extension, which  can be characterized as follows. 
Let $\bn=\nabla\phi$ in $\mathcal{O}(\Gamma(t_j))$, where $\phi$ is the signed distance function for $\Gamma(t_j)$, and $g$ a function defined on $\Gamma(t_j)$. The normal extension $g^e$ of $g$ is such that $g^e=g$ on $\Gamma(t_j)$ and    
\begin{equation}
	\bn\cdot\nabla g^e =0\quad\text{in}~~\mathcal{O}(\Gamma(t_j)).
	\label{e:nExt}
\end{equation}
For practical purposes,  $\phi$ can be a 
smooth level set function for $\Gamma$ rather than a signed distance. In this case, the vector field $\bn=\nabla\phi/|\nabla\phi|$ is normal on $\Gamma$ but defines quasi-normal directions in a neighborhood. Extension of the velocity field along    quasi-normal directions is equally admissible.   
We assume that at $t=0$ an extension  $\bu_T(0)^e$ on $\mathcal{O}(\Gamma_0)$ solving \eqref{e:nExt} is given. 
We use the notation $\bu_T^j$  and $p^j$ for an approximation of $\bu_T(t_j)^e$ and  $p(t_j)$, respectively.  Based on the approximations above and  \eqref{e:nExt} consider the following time discretization method for \eqref{NSalt}. Given $\bu_T^0=\bu_T(0)^e$, for $n=1,\ldots N$, find $\bu^n_T$, defined on $\mathcal{O}(\Gamma(t_n))$ and tangential to $\Gamma(t_n)$, i.e. $(\bu^n_T\cdot\bn)|_{\Gamma(t_n)}=0$,  and $p^n$ defined on $\Gamma(t_n)$ such that:
\begin{align}
	\label{NS_FD}
	  \left\{\begin{aligned}
		\frac{\bu^n_T-\bP\bu^{n-1}_T}{\Delta t}  + \bP\nabla\bu_T^n(\bw_N^n +\bu_T^{n-1}) \qquad \qquad\qquad \quad & \\
		 +  w_N^n \bH \bu_T^n- 2\mu  \bP \divG E_s(\bu_T^n)  +\nabla_\Gamma p^n & =\blf^n\\
		\divG \bu_T^n   &= g^n\\
	\end{aligned}\right.\quad&\text{on}~~\Gamma(t_n),\\
		 \bn\cdot\nabla\bu^n_T  =0~\quad&\text{in}~~\mathcal{O}(\Gamma(t_n)),\label{e:nExt1}
\end{align}
with $w_N^n:=w_N(t_n)$, $\bw_N^n:=\bw_N(t_n)$, $\blf^n:=\blf(t_n)$, $g^n:=g(t_n)$.
Geometric information in \eqref{NS_FD} is taken for $\Gamma(t_n)$, i.e. $\bP=\bP(t_n)$, $\bH=\bH(t_n)$. For space discretization, the stationary linearized surface PDE in \eqref{NS_FD} can  be treated using a variational approach known from the literature \cite{Jankuhn1,jankuhn2019higher}, in which the tangential constraint for the solution $\bu_T^n$ is relaxed using  a penalty approach.  This technique is now outlined.  
We set  $\bc:=\bw_N^n +\bu_T^{n-1}$, $\Gn:=\Gamma(t_n)$ and introduce the following  bilinear forms on $\Gn$, with arguments $\bu,\bv$,  vector functions on $\Gn$ that are not necessarily tangential:
\begin{align}
	A(\bu,\bv)& =\frac{1}{\Delta t} \int_\Gn  \bu \cdot \bP \bv \, ds+\int_\Gn  \bv \cdot \bP (\nabla \bu) \bc \, ds +  \int_\Gn w_N^n \bu^T \bH \bv \, ds     \nonumber\\
	& + 2 \mu \int_\Gn E_s(\bP\bu):E_s(\bP\bv) \, ds + \tau \int_{\Gn} u_N v_N\, ds,  \label{defA} \\
	B(\bu,p) & = - \int_\Gn p \, \divG \bP \bu \, ds ,\label{defB}
\end{align}
where $\tau >0$ is a penalty parameter. We introduce two Hilbert  spaces  
\[
\begin{split}
L^2_0(\Gn)&:= \{\, p \in L^2(\Gn)~|~\int_\Gn p \, ds =0\,\},~~ \text{and}\\
\bV_\ast&:= \{ \, \bv \in L^2(\Gn)^3~|~ \bv_T \in H^1(\Gn)^3,~v_N\in L^2(\Gn)\,\},
\end{split}
\] with the norm $\|\bv\|_{V_\ast}^2= \|\bv_T\|_{H^1(\Gn)}^2 +\|v_N\|_{L^2(\Gn)}^2$. 
A variational formulation corresponding to \eqref{NS_FD} is as follows: Find $\bu_\ast \in \bV_\ast$, $p \in L^2_0(\Gn)$ such that 
\begin{equation} \label{Oseen}
	\begin{split}
		A(\bu_\ast,\bv) + B(\bv,p) & = \int_{\Gn} \tilde{\blf} \cdot \bv \, ds \quad \text{for all}~ \bv \in \bV_\ast \\
		B(\bu_\ast,q) & = - \int_{\Gn} g^n q \, ds\quad  \text{for all} ~ q \in L_0^2(\Gn),
	\end{split}
\end{equation}
with $\tilde{\blf} := \blf + \frac{1}{\Delta t} \bP \bu_T^{n-1}$. 
This variational formulation is consistent in the sense that if $(\bu_T^n,p^n)$ is a strong solution of  \eqref{NS_FD} then $(\bu_\ast,p)=(\bu_T^n,p^n)$ solves \eqref{Oseen}. 
Using the Korn type inequality \eqref{Korn} and an inf-sup property of $B(\cdot,\cdot)$ it can be shown that for  $\Delta t$ sufficiently small and $\tau$ sufficiently large (but independent of $\Delta t$) the problem \eqref{Oseen} is well-posed and its unique solution $\bu_\ast$ satisfies $\bu_\ast\cdot\bn=0$, cf. \cite{Jankuhn1} for a precise analysis.  Therefore, for such $\Delta t$ and $\tau$ eq. \eqref{Oseen} is a  well-posed weak formulation of \eqref{NS_FD}. For a finite element method introduced later it is important that  the space $\bV_\ast$ admits vector functions not necessarily tangential to $\Gn$. The solution $\bu$ of \eqref{Oseen} is defined only on $\Gn$ and we do not specify an extension as in \eqref{e:nExt1}, yet. Such an extension will be determined in  the finite element  method, as explained in the next section.
\begin{remark} \label{remReform} \rm
	In the practical implementation of a finite element method for \eqref{Oseen} the surface $\Gamma(t_n)$ will be approximated by a piecewise planar surface $\Gamma_h$ and the corresponding projection operator $\bP_h$ has discontinuities across boundaries between different planar segments of this approximate surface. This causes difficulties for the terms in the bilinear forms $A(\cdot,\cdot)$, $B(\cdot,\cdot)$ where derivatives of $\bP_h$  are involved. These can be avoided by eliminating these derivatives as follows. For $p \in H^1(\Gn)$ we have $B(\bu,p)= \int_\Gn \nablaG p \cdot \bP \bu \, ds=\int_\Gn \nablaG p \cdot  \bu \, ds $, which elimates derivatives of $\bP$. For the  bilinear form $A(\cdot,\cdot)$ we can use the relation $\nablaG (\bP \bu)= \nablaG \bu - u_N \bH$ and thus $E_s(\bP \bu)=E_s(\bu) - u_N \bH$.
\end{remark}

\subsection{Finite element method} \label{s:fulldisc}
We now explain the spatial discretization of \eqref{Oseen}.
Consider  a fixed polygonal domain  $\Omega \subset \R^3$ that strictly contains $\Gamma(t)$ for all $t\in(0,T)$.
Let $\{\T_h\}_{h>0}$ be  a family of shape-regular consistent triangulations of the bulk domain $\Omega$, with $\max\limits_{K\in\T_h}\mbox{diam}(K) \le h$. Corresponding to the bulk triangulation we define a standard finite element space of piecewise polynomial continuous functions of a fixed degree $k\ge1$:
\begin{equation} \label{eq:Vh}
	V_h^k=\{v_h\in C(\Omega)\,:\, v_h\in P_k(K),~~ \forall K\in \mathcal{T}_h\}.
\end{equation}
The bulk velocity and pressure finite element spaces are standard Taylor--Hood spaces:
\begin{equation*} 
	\bU_h \coloneqq  (V_h^{2})^3, \quad Q_h \coloneqq  V_h^1. 
\end{equation*}

For efficiency reasons, we use an extension not in the given ($h$ and $\Delta t$-independent) neighborhood $\mathcal{O}(\Gamma(t_n))$ of $\Gn=\Gamma(t^n)$ but in  a  \emph{narrow band} around   $\Gn$. This $\Delta t$-dependent narrow band consists   of all tetrahedra within a $\delta_n$ distance from the surface, with
\begin{equation} \label{e:delta}
	\delta_n := c_\delta \sup_{t\in(t_{n-1},t_{n})}\| w_N\|_{L^\infty(\Gamma(t))}~\Delta t
\end{equation}
and $c_\delta\geq 1 $, an $O(1)$ mesh-independent parameter. More precisely, we define the mesh-dependent narrow band 
\begin{equation*}
	{\mathcal{O}}_{\Delta t}(\Gn):= {\bigcup}\left\{\overline{K}\,:\,  K\in\mathcal{T}_h\,:\mbox{dist}(\bx,\Gn) \le\delta_n  \text{ for some } \bx \in K\right\}.
\end{equation*}
We also need a subdomain of ${\mathcal{O}}_{\Delta t}(\Gn)$  only consisting of tetrahedra intersected by $\Gn$,
\begin{equation*}
	\omega^{n}_{\Gamma}:=
	{\bigcup}\left\{ \overline{K}\in\mathcal{T}_h\,:\, K\cap\Gn\neq\emptyset\right\}.
\end{equation*}
In a time step from $t_{n-1}$ to $t_n$, the surface  may move up to $\Delta t\sup\limits_{t\in(t_{n-1},t_{n})}\| w_N\|_{L^\infty(\Gamma(t))}$ distance in normal direction, which is thus the maximum distance from $\Gn$ to $\Gamma_{n-1}$.
Therefore, $c_\delta$ in \eqref{e:delta} can be taken  sufficiently large, but independent of $h$, such that
\begin{equation}\label{cond1} 
	\omega^{n}_{\Gamma}\subset{\mathcal{O}}_{\Delta t}(\Gamma_{n-1}).
\end{equation}
This condition is the discrete analog of \eqref{ass1} and it is essential for the well-posedness of the finite element problem at time step $n$.

Next we define finite element spaces for velocity and pressure as restrictions to the narrow band ${\mathcal{O}}_{\Delta t}(\Gn)$ of the time-independent bulk spaces $\bU_h$ and $Q_h$:
\begin{equation}\label{eq:testtrial}
 \bU_h^n: =\{\, {\bv}|_{{\mathcal{O}}_{\Delta t}(\Gn)}~|~\bv \in\bU_h\, \},\quad Q_h^n:=\{\, {q}|_{{\mathcal{O}}_{\Delta t}(\Gn)}~|~q \in Q_h\, \}.  
\end{equation}
Denote by $I_h(\bv)\in\bU_h^n$ the Lagrange interpolation of $\bv\in C({\mathcal{O}}_{\Delta t}(\Gn))^3$.
Our finite element formulation is based on  formulation \eqref{Oseen}. Recall that in \eqref{Oseen} we do not require $\bu_\ast$ to be tangential to $\Gamma(t)$. The tangential condition is weakly enforced by the penalty term in \eqref{defA} with penalty parameter $\tau$. Such a penalty approach is often used in finite element methods for vector values surface PDEs~ \cite{hansbo2016analysis,jankuhn2019higher,olshanskii2018finite,olshanskii2019penalty}. In the discretization in addition to this penalty term we include two  volume terms with integrals over $\omega_\Gamma^n$ and ${{\mathcal{O}}_{\Delta t}(\Gn)}$. The discrete problem is as follows: For  given $\bu_h^{n-1} \in \bU_h^{n-1}$  and ${\bc}^{n-1}_h={\bu}^{n-1}_h+ I_h(w_N^e(t_n)\bn)$ find  $\bu_h^n\in \bU_h^n$, $p_h^n\in Q_h^n$,   satisfying
\begin{align}
	\int_{\Gn}\left(\frac{\bu^n_h-\bu^{n-1}_h}{\Delta t}+ (\nabla\bu_h^n){\bc}^{n-1}_h + u_N^n \bH \bu_h^n\right)\cdot\bP\bv_h\,ds  &  \nonumber \\
	+ 2\mu \int_{\Gn} E_s(\bP\bu_h^n):E_s(\bP\bv_h)\,ds  
	+ \tau\int_{\Gn}  (\bn\cdot\bu^n_h)(\bn\cdot\bv_h)\,ds & \label{e:FEM} \\  
	+ \int_{\Gn} \nablaG p^n \bv_h\,ds+ \rho_u\int_{{\mathcal{O}}_{\Delta t}(\Gn)}(\bn\cdot\nabla \bu^{n}_h)(\bn\cdot\nabla \bv_h)\,\diff{\vect x} & = \int_{\Gn} \blf^n\bv_h\,ds \quad \forall~ \bv_h\in \bU_h^n \nonumber \\
	- \int_{\Gn} \nablaG q\, \bu_h^n\,ds 
	+ \rho_p  \int_{\omega^{n}_{\Gamma}} (\bn \cdot \nabla p_h^n)  (\bn \cdot\nabla q_h) \,  \diff{\vect x} &=\int_{\Gn}g^nq_h\,ds \quad \forall ~q_h\in Q_h^n, \nonumber
 \end{align} 	
 for $n=1,\dots,N$. The term $\int_{{\mathcal{O}}_{\Delta t}(\Gn)}(\bn\cdot\nabla \bu^{n}_h)(\bn\cdot\nabla \bv_h)\,\diff{\vect x}$, with a parameter $\rho_u$, is included for two reasons. Firstly, this term is often used in TraceFEM to improve the conditioning of the resulting stiffness matrix, cf. e.g.~\cite{burmanembedded}. Secondly, this volume term weakly enforces the extension condition \eqref{e:nExt1} with
$\mathcal{O}(\Gamma(t_n))$ replaced by ${\mathcal{O}}_{\Delta t}(\Gn)$. In particular, at time $t_n$ a well-conditioned algebraic system is solved for all discrete velocity degrees of freedom in the neighborhood ${\mathcal{O}}_{\Delta t}(\Gn)$; we refer to \cite{lehrenfeld2018stabilized} for a stability and convergence analysis of such an extension procedure for a scalar surface equation. 
The  volume term in the pressure equation is added for the purpose of numerical  stabilization of pressure~\cite{olshanskii2021inf}. The formulation \eqref{e:FEM} is consistent in the sense that the equations hold  if the solution of \eqref{NS_FD}, extended along normal directions, is substituted instead of $u_h^n$ and $p_h^n$.   Penalty and stabilization parameters are set following the error analysis in~\cite{olshanskii2021inf}:
\[
\tau=h^{-2},\quad \rho_u=h^{-1},\quad \rho_p=h.
\]
In practice, $\Gn$, $n=1,2,\dots$, is replaced by a sufficiently accurate approximation $\Gamma_h^n$ in such a way that integrals over $\Gamma_h^n$ can be computed accurately and efficiently. Other geometric quantities, i.e. $\bn$, $\bH$ and $\bP$, are also replaced by sufficiently accurate approximations. The derivatives of projected fields, i.e. $E_s(\bP \bu_h^n)$ and $E_s(\bP \bv_h)$, are handled as discussed in Remark~\ref{remReform}. For the surface Stokes problem discretized by the trace  $\vect P_{k+1}$--$P_k$, $k\ge1$, elements, the introduced geometric error is analyzed in~\cite{jankuhn2020error}. Below we will use the lowest order trace Taylor-Hood pair  $\vect P_{2}$--$P_1$. 
An approximation $\Gamma_h^n$ that is piece-wise planar with respect to $\cT_h$ leads to an  $O(h^2)$ geometric error. This geometric error order is suboptimal given the interpolation order of the Taylor--Hood pair $\vect P_{2}$--$P_1$. This suboptimality can be overcome  by the isoparametric TraceFEM~\cite{grande2018analysis}. For numerical results in this paper we use the following less efficient but simpler  approach.  For the geometry approximation (only) we construct a piece-wise planar $\Gamma_h^n$  with respect to a local refinement of each tetrahedron from $\omega^{n}_{\Gamma}$. The number of local refinement levels is chosen sufficiently large to restore the optimal $O(h^3)$ convergence. Note that this local refinement only influences the surface approximation and not the finite element spaces used.  

Finally we note that the use of BDF2 instead of implicit Euler in the implicit time stepping scheme leads to obvious modifications of \eqref{e:FEM}. In the experiments in the next section we used this second order in time  variant of \eqref{e:FEM}.

\section{Numerical examples}\label{s_num}

For discretization,  an initial triangulation $\T_{h_0}$  was build by  dividing $\Omega=(-\frac53,\frac53)^3$ into $2^3$ cubes and further splitting each cube into 6 tetrahedra with  $h_0 = \frac53$. Further, the mesh is refined in a sufficiently large neighborhood of a surface so that tetrahedra cut by $\Gamma(t)$ belong to the same refinement level for all $t\in[0,T]$. $\ell \in \mathbb{N}$ denotes the level of refinement and $h_\ell = \frac53\,2^{-\ell}$. The  trace $\vect P_2$--$P_1$ Taylor--Hood finite element method with BDF2 time stepping, as described in the previous section, is applied. 


\subsection{Convergence for a smooth solution}

To verify the implementation and to check the convergence order of the discrete solution, we set up an experiment with a known tangential flow along an expanding/contracting  sphere. In this example the total area of $\Gamma$ is not preserved, but it allows to  prescribe a flow $\bu$ analytically and calculate $\blf$ and $g$. 

The surface $\Gamma$ is given by its distance function 
\begin{equation}\label{dist}
	d(\bx, t) \coloneqq \| \bx \| - r(t), \quad
	r(t) \coloneqq 1 + \tfrac14 \sin(2\,\pi\,t),
\end{equation}
We consider $t \in [0,1]$.
The surface normal velocity is then $\bw_N=w_N\bn$, with
$
	w_N(t) = r'(t) =  \frac{\pi \cos(2\,\pi\,t)}{2}, $ $\bn(\bx)=\bx/|\bx|.
$
We choose~$\mu = 5 \times 10^{-3}$.

The exact solution is given by
\begin{equation}\label{exact_soln}
	\bu(\bx, t) \coloneqq \bP(\bx, t)\,(1 - 2\,t, 0, 0)^T, \quad
	p(\bx) \coloneqq  x\,y^2 + z,
\end{equation}
and right hand sides~$\blf$ and~$g = \divG \bu_T + w_N \kappa$ are computed accordingly from~\eqref{dist}--\eqref{exact_soln}. For numerical integration, exact solutions and right hand sides are extended along normal directions to $\Gamma$.

\begin{figure}[ht!]
	\centering
	\begin{subfigure}{.3\linewidth}
		\centering $t = 0$ \vskip 1mm
\includegraphicsw[0.95]{./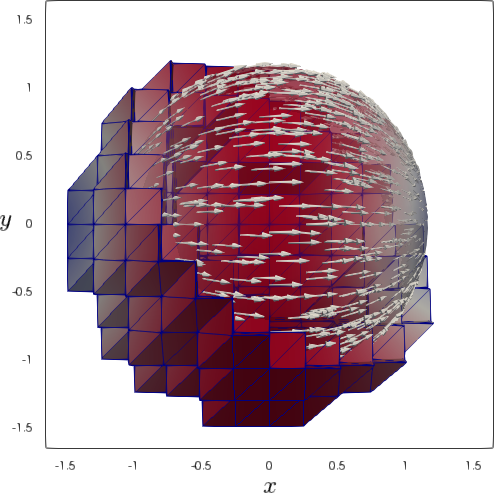}
	\end{subfigure}%
	\begin{subfigure}{.3\linewidth}
		\centering $t = 0.15$ \vskip 1mm
\includegraphicsw[.95]{./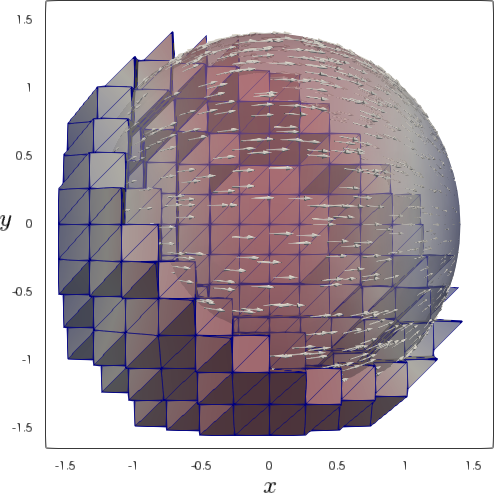}
	\end{subfigure}%
	\begin{subfigure}{.3\linewidth}
		\centering $t = 0.9$ \vskip 1mm
\includegraphicsw[.95]{./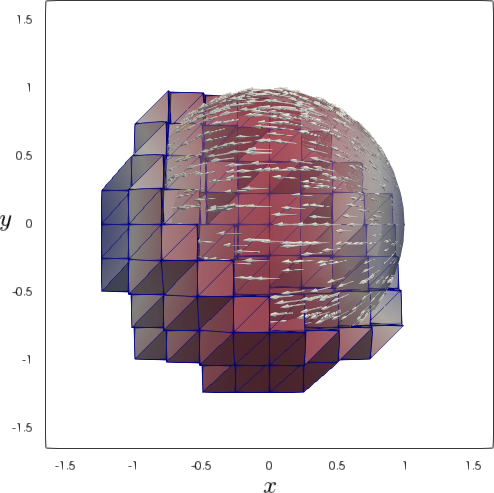}
	\end{subfigure}%
	\caption{Illustration of the extension mesh and solution at mesh level $\ell = 3$. \label{fig1}}
\end{figure}%

\begin{table}\centering
	\begin{xtabular}[1.2]{|c||c|c|c|c|}
		\hline
		Mesh level~$\ell$ & 2 & 3 & 4 & 5 \\ \hline
		$h$ & $4.17\times10^{-1}$ & $2.08\times	10^{-1}$ & $1.04\times	10^{-1}$ & $5.21\times	10^{-2}$ \\ \hline
		Averaged \# d.o.f. & $4.41\times	10^3$ & $1.73\times	10^4$ & $6.82\times	10^4$ & $2.73\times	10^5$ \\ \hline
	\end{xtabular}
	\vskip 3mm
	\begin{xtabular}[1.2]{|c|c||c|c||c||c|}
		\hline
		$\|\vect u - \vect u_h\|_{\bf 1}$	&	Order	&	$\|\vect u - \vect u_h\|_{L^2(\Gs)}$	&	Order	&	$\|p - p_h\|_{L^2(\Gs)}$	&	Order	\\ \hline
		$9.3\times	10^{-1}$	&	$\text{}$	&	$1.3\times	10^{-1}$	&	$\text{}$	&	$3.2\times	10^{-1}$	&	$\text{}$	\\ \hline
		$1.9\times	10^{-1}$	&	$2.3$	&	$9.9\times	10^{-3}$	&	$3.72$	&	$3.5\times	10^{-2}$	&	$3.2$	\\ \hline
		$4.3\times	10^{-2}$	&	$2.13$	&	$9.2\times	10^{-4}$	&	$3.42$	&	$7.3\times	10^{-3}$	&	$2.27$	\\ \hline
		$1.2\times	10^{-2}$	&	$1.92$	&	$1.2\times	10^{-4}$	&	$2.98$	&	$1.8\times	10^{-3}$	&	$2.02$	\\ \hline				
	\end{xtabular}
	\caption{Convergence results for the example with analytical solution.\label{tab1}}
\end{table}%

The numerical solution was computed on four consecutive meshes with refinement levels $\ell\in\{2,\dots,5\}$ and a time step $\Delta t=0.05$ on level 2; $\Delta t$ is halved in each spatial refinement, and for parameter in \eqref{e:delta} we set $c_\delta=1.1$. A mesh in ${\mathcal{O}}_{\Delta t}(\Gn)$ together  with the embedded $\Gamma(t)$ and computed solution are illustrated in Figure~\ref{fig1}. In Table~\ref{tab1} we show the mesh parameter $h$ and the resulting  (averaged over all time steps) number of active degrees of freedom (\# d.o.f.). We see that a mesh refinement leads to approximately four times more  degrees of freedom.
Table~\ref{tab1} further reports the velocity and pressure errors measured in (approximate) $L_{\Vd}^2$ and $L^2(\Gs)$ norms. These norms were computed using a quadrature rule for time integration. Results demonstrate the expected 2nd order convergence in the ``natural'' norms and a higher order for the velocity error in the $L^2(\Gs)$ norm. These orders are optimal for the  $\vect P_2$--$P_1$  elements used.   

\subsection{Tangential flow on a deforming sphere}
In this numerical example we consider a deforming  unit sphere  and compute the induced tangential flow, i.e., the numerical solution of the TSNSE \eqref{NSalt}.  Denote by $\Gamma_0$ the reference sphere of radius $1$  with the center  in the origin $O$. Consider spherical   coordinates $(r,\theta,\varphi) \in (0,\infty) \times [0,\pi] \times [0,2 \pi)$ and denote by $\mathcal{H}_n^m(\theta,\varphi)$, the spherical harmonic of degree $n$ and order $m$.  Assume that $\mathcal{H}_n^m$ are normalized, i.e. $\|\mathcal{H}_n^m\|_{L^2(\Gamma_0)}=1$.  For the evolving surface we consider as ansatz
\begin{equation}\label{osc}
    \Gamma(t)=\left\{ \,\bx=(r,\theta,\varphi)~|~r= 1+\sum_{n=1}^N \sum_{|m|\le n} A_{n,m}(t)\mathcal{H}_n^m(\theta,\varphi)\, \right\},
\end{equation}
with suitably chosen coefficients $A_n(t)$.  The function $\xi:=\sum_{n=1}^N \sum_{|m|\le n} A_{n,m}(t)\mathcal{H}_n^m(\theta,\varphi)$ describes the radial deformation. We assume small oscillations,   $\|\xi\|\ll 1$.  
Under this assumption, an accurate \emph{approximation} of the normal velocity is given by  $\bw_N=w_N\bn$, with
\begin{equation} \label{Errw}
w_N= \frac{d \xi}{dt}=\sum_{n=1}^N \sum_{|m|\le n}\frac{d A_{n,m}}{dt}\mathcal{H}_n^m, \quad \bn(\bx)=\bx/|\bx|.
\end{equation}
We want the surface to be inextensible, i.e., $\frac{d}{dt} |\Gamma(t)|=0$. Appropriate coefficients $A_{n,m}(t)$ such that we have inextensibility can be determined as follows.
Application of  the surface Reynolds transport formula and integration by parts gives for the variation of surface area:
\begin{equation}\label{varSurf}
	\frac{d}{dt} |\Gamma(t)|=\frac{d}{dt}  \int_{\Gamma(t)} 1 \,ds=  \int_{\Gamma(t)} \mbox{div}_{\Gamma} \mathbf{w}_N\,ds=
	\int_{\Gamma(t)} \kappa w_N \,ds.
\end{equation}
For the doubled mean curvature we have, cf.~\cite{lamb1924hydrodynamics}, 
\[
\begin{aligned}
	\kappa&={2}-{2\xi}-\Delta_{\Gamma}\xi
	={2}-\sum_{n=1}^N\sum_{|m|\le n} \left\{{2A_{n,m}\mathcal{H}_n^m}-{n(n+1)} A_{n,m}\mathcal{H}_n^m\right\}\\
	&={2}+\sum_{n=1}^N\sum_{|m|\le n}{A_{n,m}}(n(n+1)-2)\mathcal{H}_n^m.
\end{aligned}
\]
Using $\int_{\Gamma_0}\mathcal{H}_n^m=0$, $n\ge1$, and  $\int_{\Gamma_0}\mathcal{H}_n^m\mathcal{H}_{n'}^{m'}=\delta_n^{n'}\delta_m^{m'}$,  we compute for the area variation:
\begin{equation}\label{RHSvar}
	\frac{d}{dt} |\Gamma(t)|=\int_{\Gamma(t)} \kappa w_N ds=\sum_{n=1}^N\sum_{|m|\le n}{(n-1)(n+2)}\frac{dA_{n,m}}{dt}A_{n,m}.
\end{equation}
Based on this formula we set $A_{2,0}= \frac\epsilon2\cos(\omega t)$, $A_{3,0}= \frac\epsilon{\sqrt{10}}\sin(\omega t)$, and $A_{n,m}=0$ for other coefficients. For this choice of coefficients one easily verifies $\frac{d}{dt} |\Gamma(t)|=0$.
The TSNSE equations \eqref{NSalt} are then solved with the right hand side  given by \eqref{rhs} with $u_N=w_N$ computed from~\eqref{Errw}. The initial velocity is zero.

In the first numerical example we let  $\epsilon = 0.2$,  $\omega = 2\pi$, $\mu = \frac12 10^{-4}$,  and include  $\mathcal{H}_2^0$ and $\mathcal{H}_3^0$, two zonal spherical harmonics of degree 2 and 3. The relative variation of the surface area $\Gamma(t)$ in the left plot in Figure~\ref{fig:area} shows less than $0.1\%$ of surface variation, which is due to approximation errors and  finite (rather than infinitesimal) deformations. The latter causes an approximation error in \eqref{Errw}. 
\begin{figure}[ht!]
	\centering\includegraphicsw[0.45]{./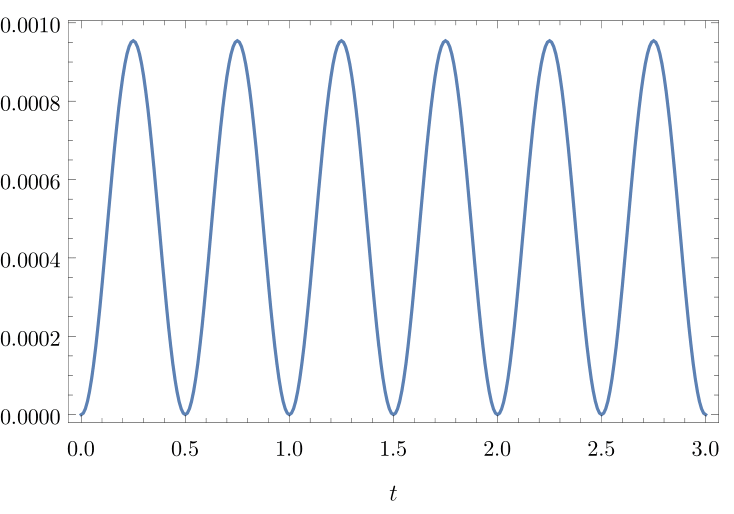}\quad 
	\includegraphicsw[0.45]{./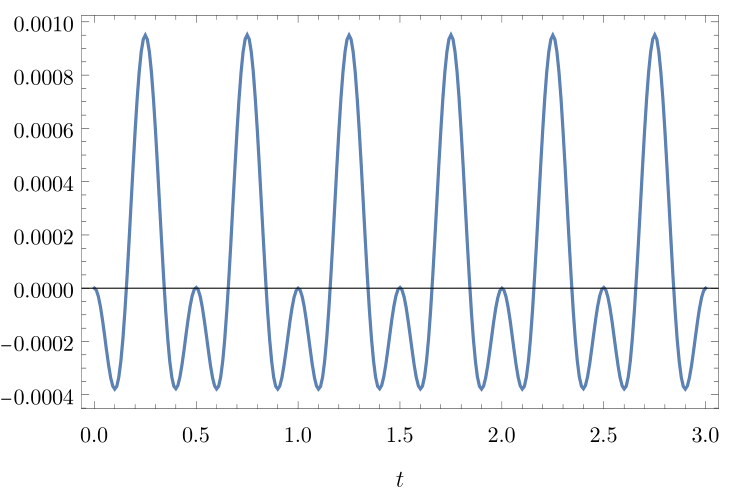}
	\caption{Relative surface area variation $\frac{|\Gamma(0)|-|\Gamma(t)|}{|\Gamma(0)|}$ as a function of time for axisymmetric (left plot) and asymmetric (right plot) deformations of the sphere. \label{fig:area}}
\end{figure}

The velocity field induced by these axisymmetric  deformations of the sphere  is visualized in Figure~\ref{fig:vel1}. We see that the velocity pattern is dominated by a sink-and-source flow driven by the term $-\kappa w_N$ on the right-hand side of the divergence condition in  \eqref{NSalt}. 

\begin{figure}[ht!]
	\centering
	\begin{subfigure}{.3\linewidth}
		\centering $t = 0$ \vskip 1mm
		\includegraphicsw{./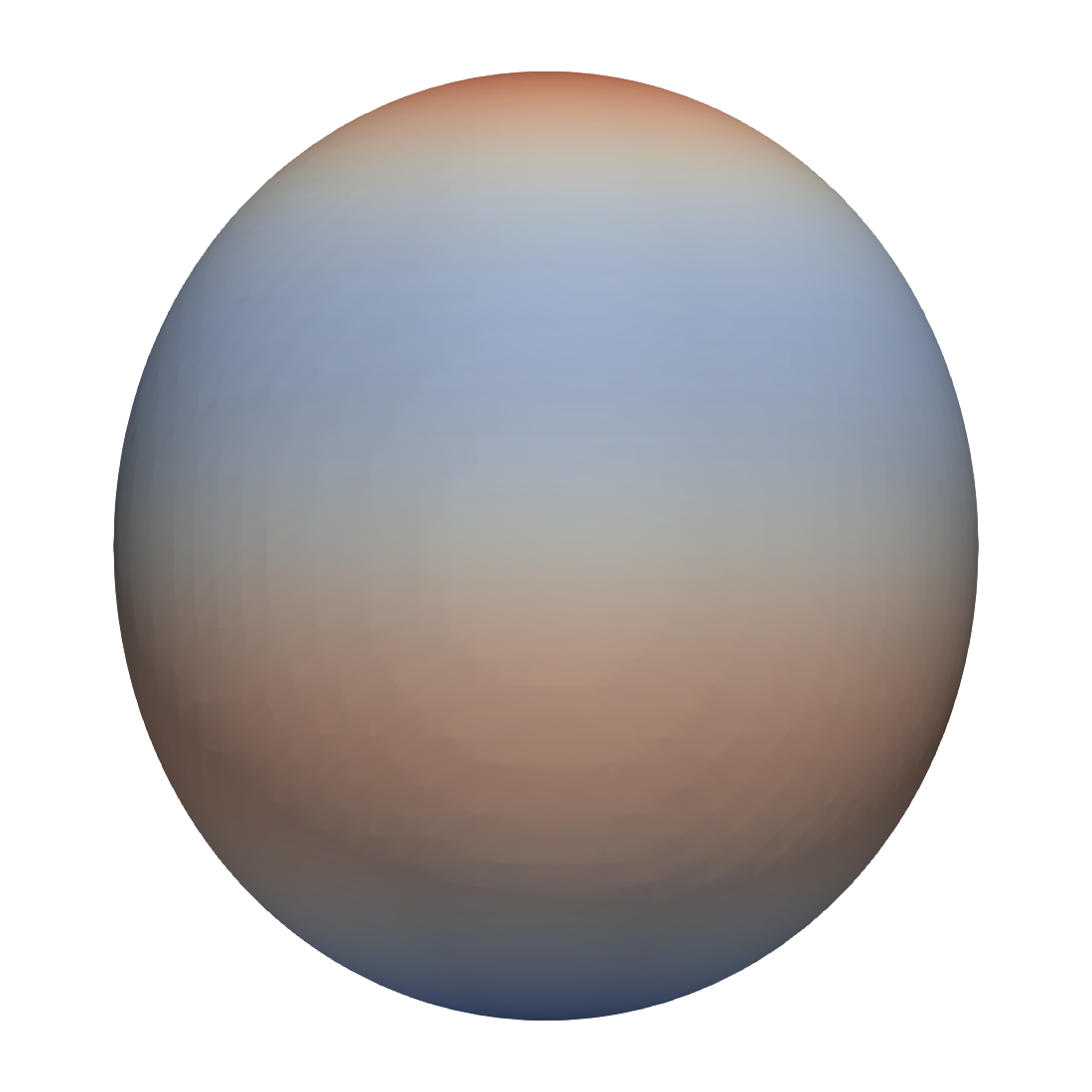}
	\end{subfigure}%
	\begin{subfigure}{.3\linewidth}
		\centering $t = 0.1$ \vskip 1mm
		\includegraphicsw{./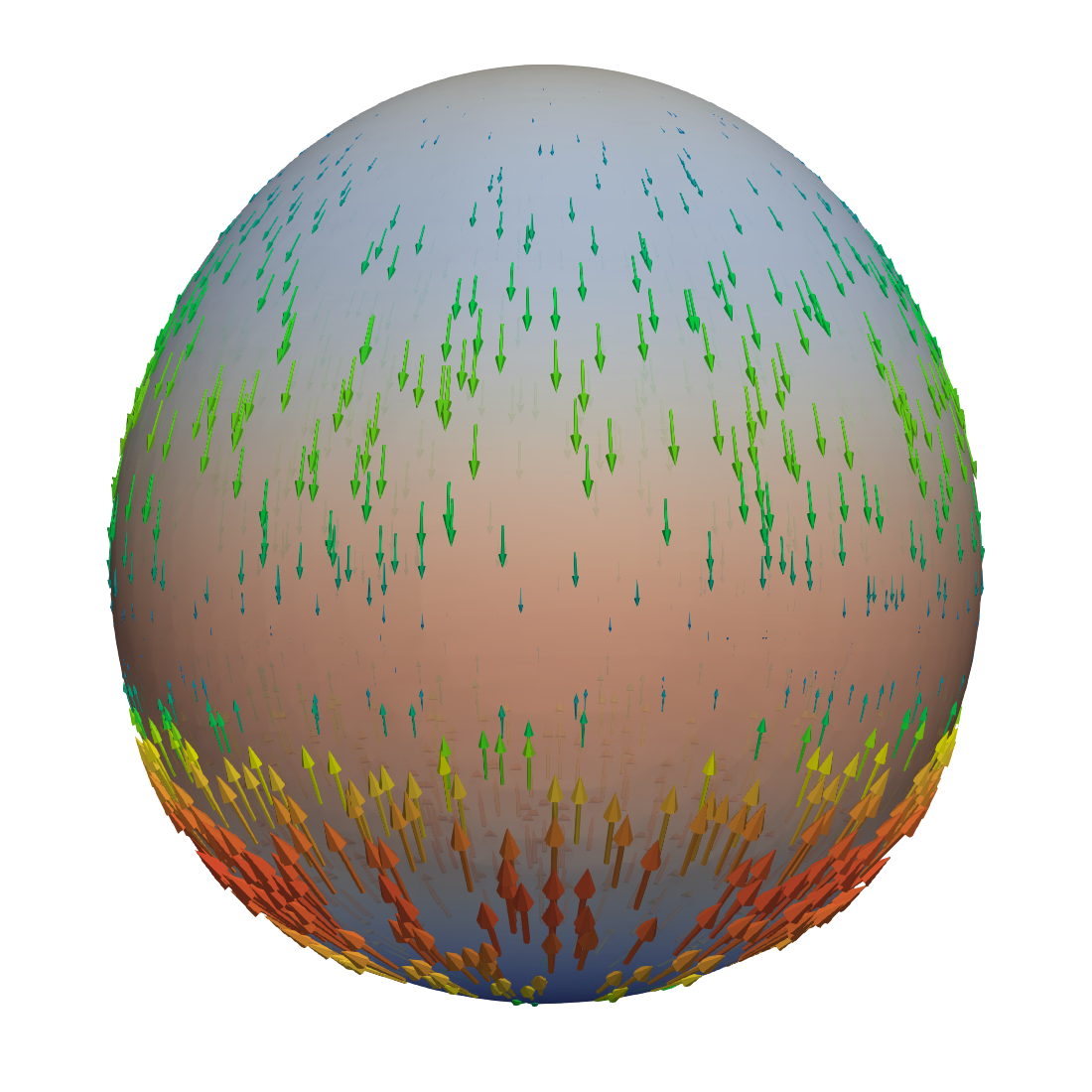}
	\end{subfigure}%
	\begin{subfigure}{.3\linewidth}
		\centering $t = 0.3$ \vskip 1mm
		\includegraphicsw{./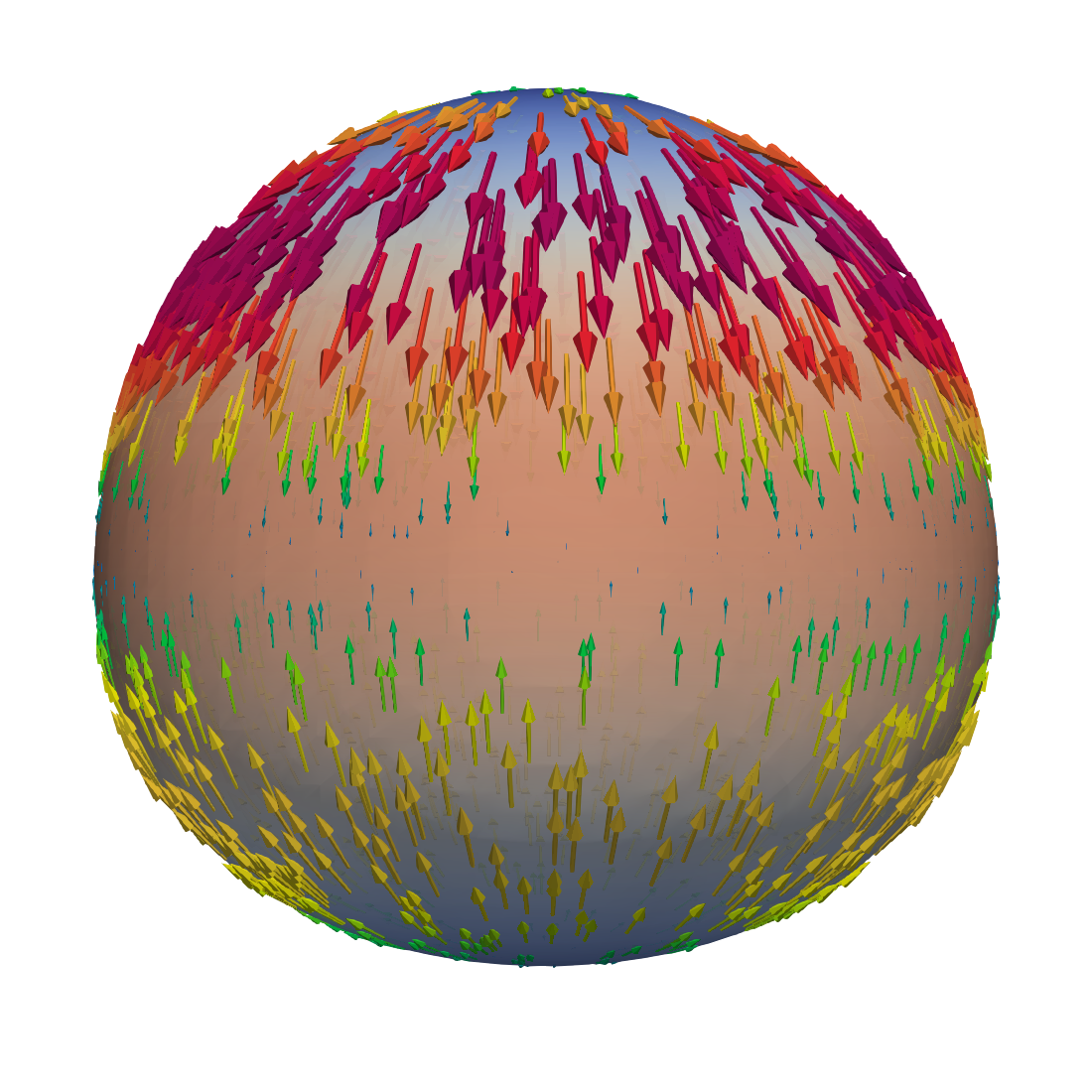}
	\end{subfigure}%
	\vskip 2mm
	\begin{subfigure}{.3\linewidth}
		\centering $t = 0.5$ \vskip 1mm
		\includegraphicsw{./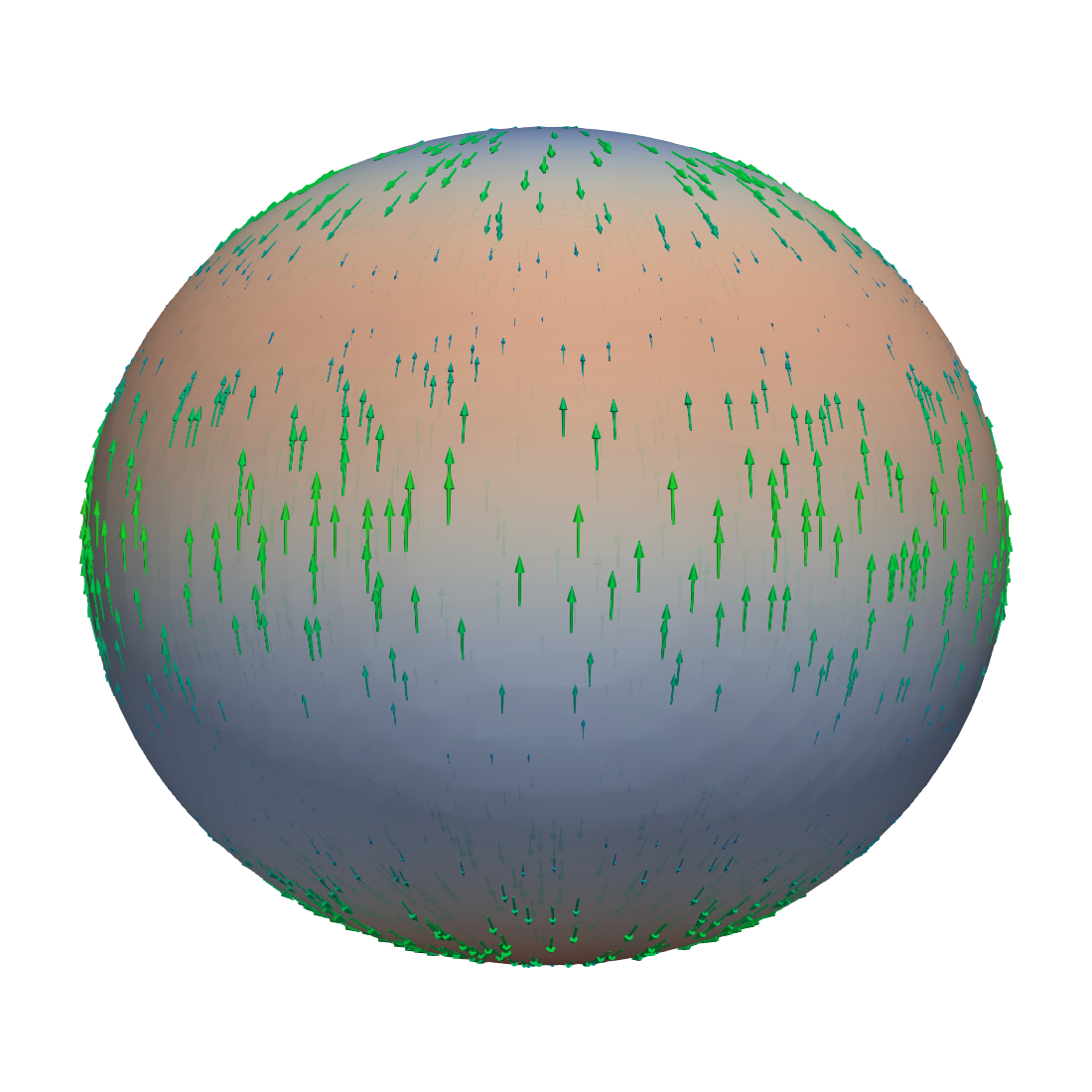}
	\end{subfigure}%
	\begin{subfigure}{.3\linewidth}
		\centering $t = 0.7$ \vskip 1mm
		\includegraphicsw{./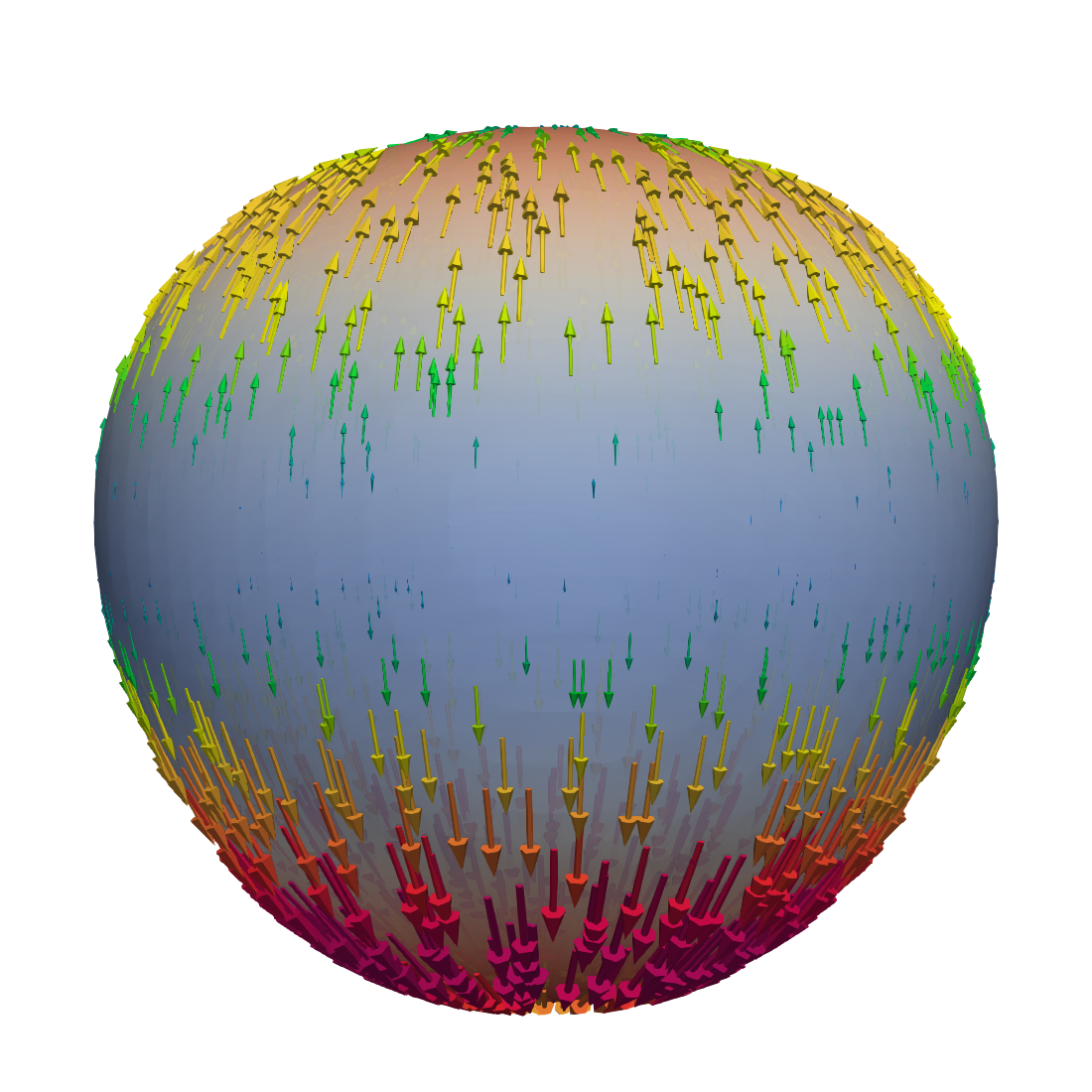}
	\end{subfigure}%
	\begin{subfigure}{.3\linewidth}
		\centering $t = 0.9$ \vskip 1mm
		\includegraphicsw{./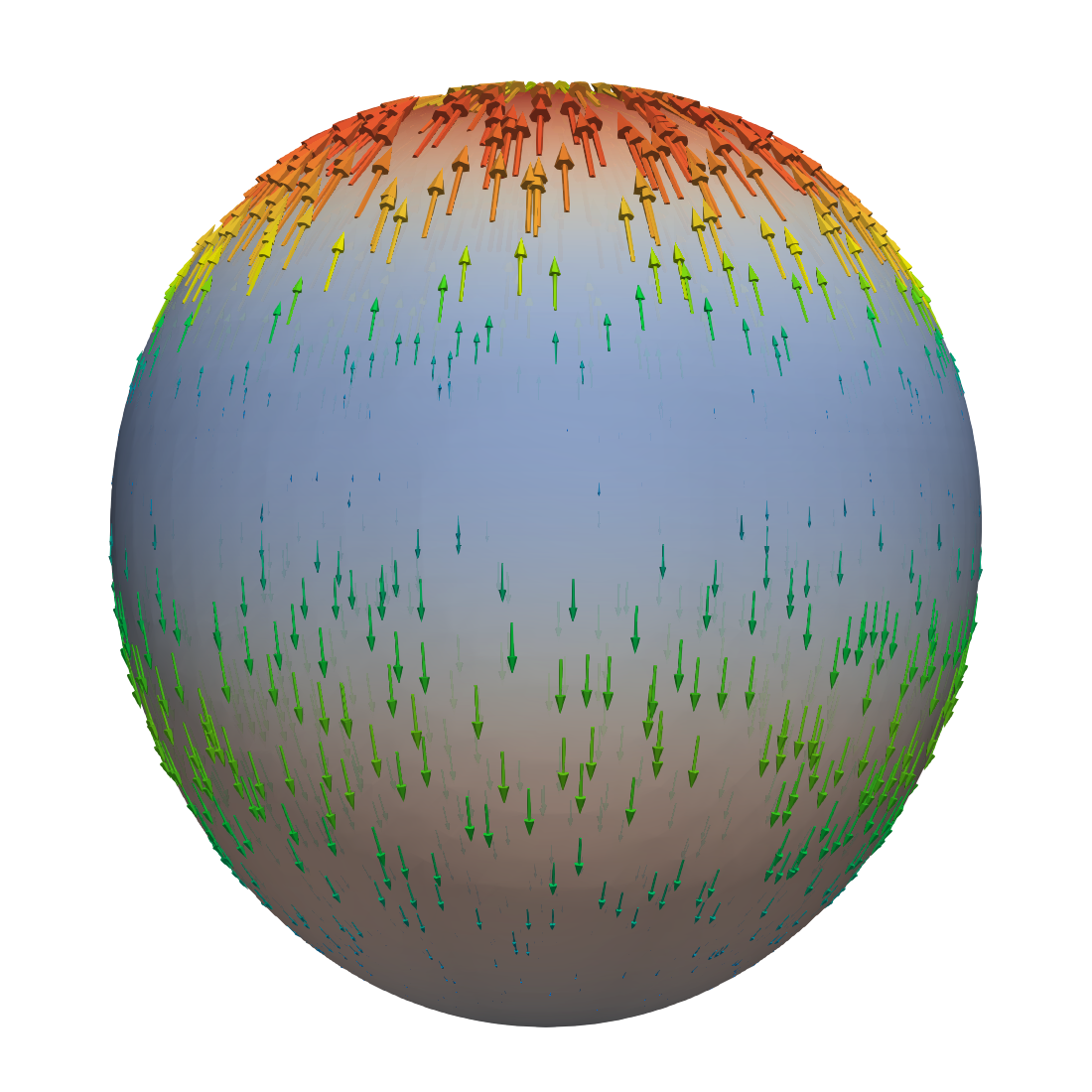}
	\end{subfigure}%
	\vskip 2mm
	\begin{subfigure}{.3\linewidth}
		\centering $t = 1$ \vskip 1mm
		\includegraphicsw{./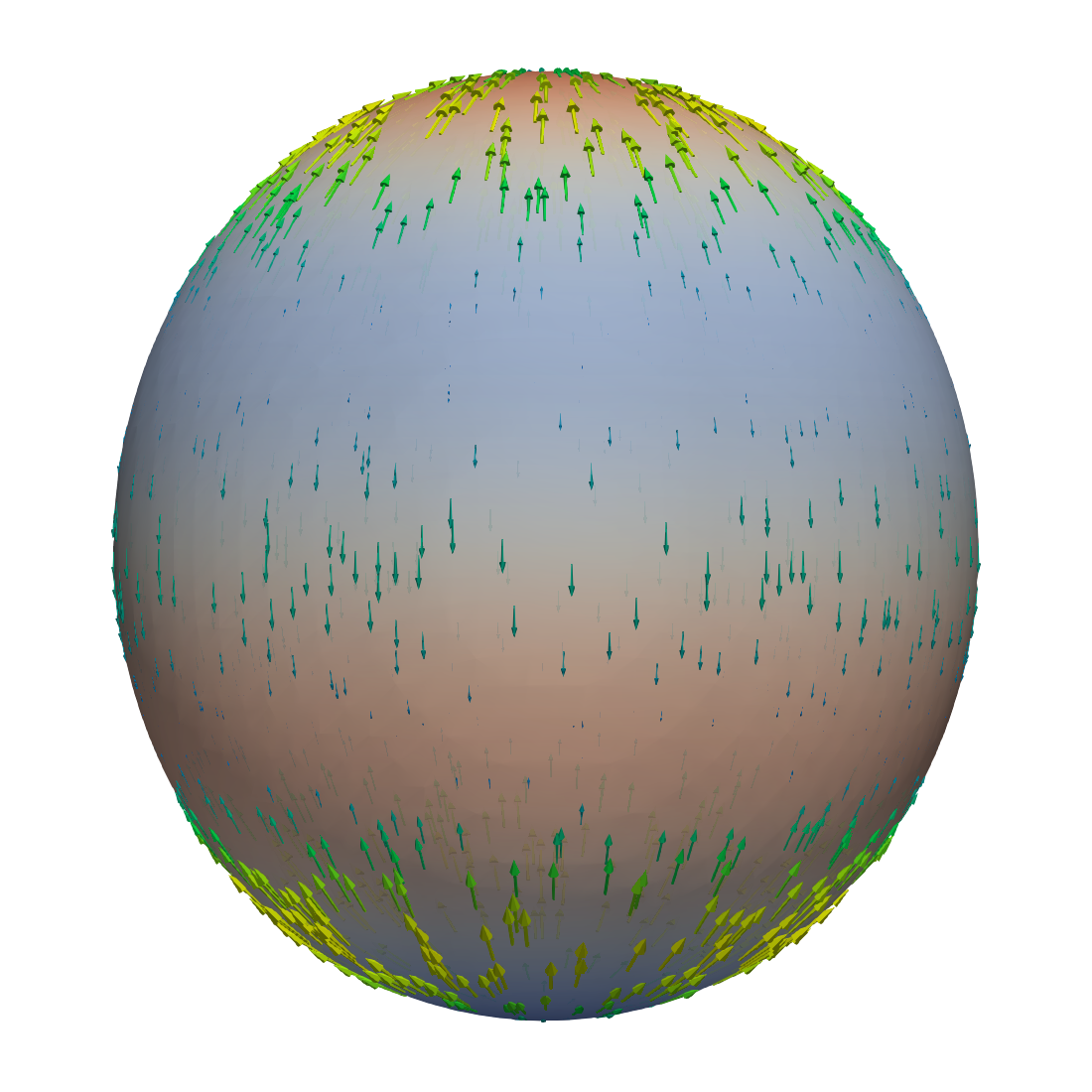}
	\end{subfigure}%
	\begin{subfigure}{.6\linewidth}
		\centering\includegraphicsw[.9]{./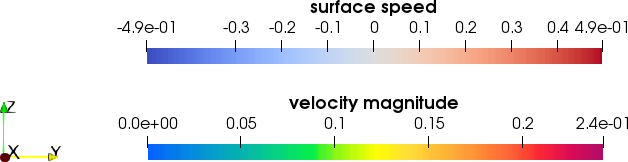}
	\end{subfigure}%
	\caption{Visualization of velocity field for axisymmetric deformations of the sphere ; mesh level $\ell = 4$, $\Delta t = 0.01$. Click \href{https://youtu.be/v6naA_6Urck}{here} to see the full animation. \label{fig:vel1}}
\end{figure}%

%
%

\begin{figure}[ht!]
	\centering
	\begin{subfigure}{.3\linewidth}
		\centering $t = 0$ \vskip 1mm
		\includegraphicsw{./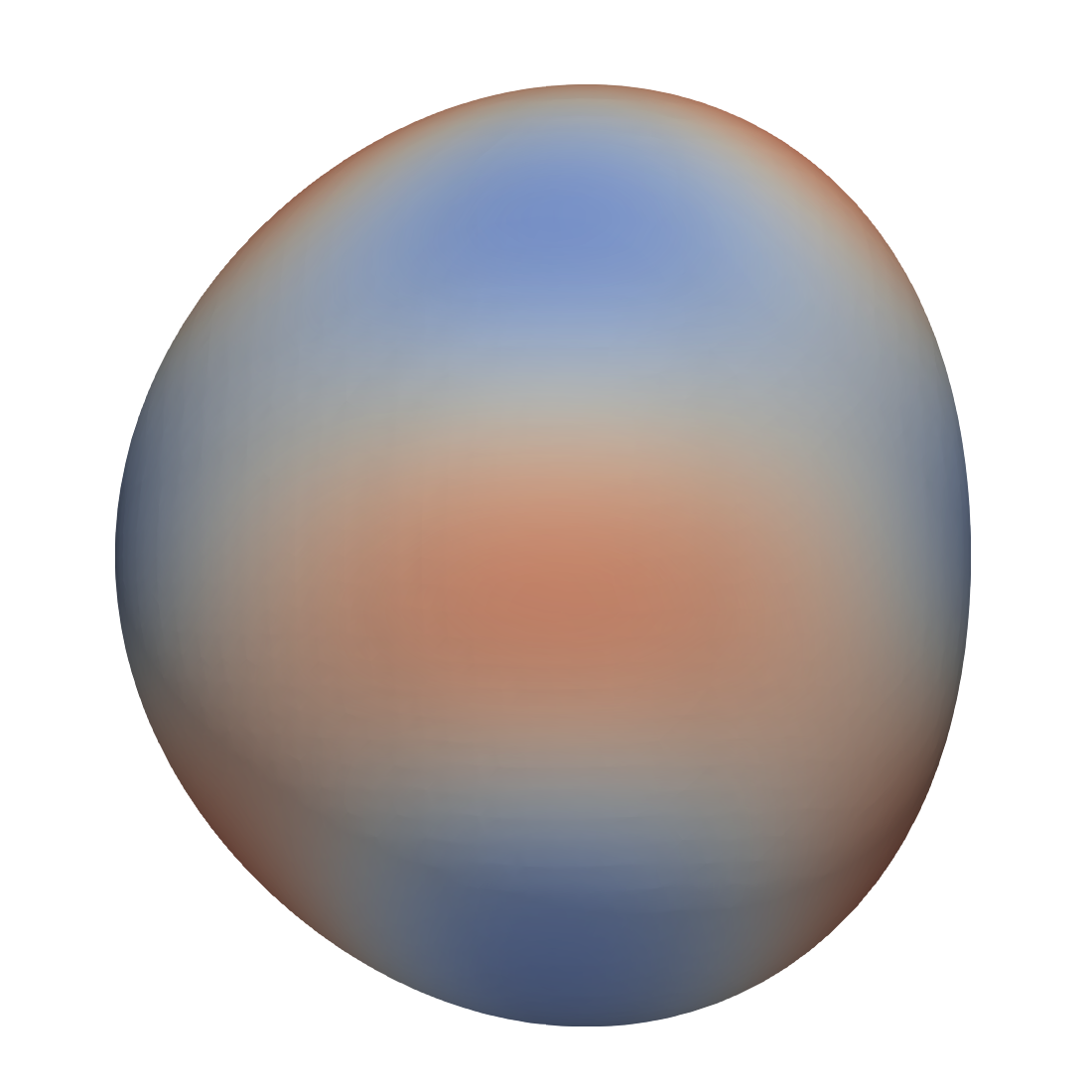}
	\end{subfigure}%
	\begin{subfigure}{.3\linewidth}
		\centering $t = 0.1$ \vskip 1mm
		\includegraphicsw{./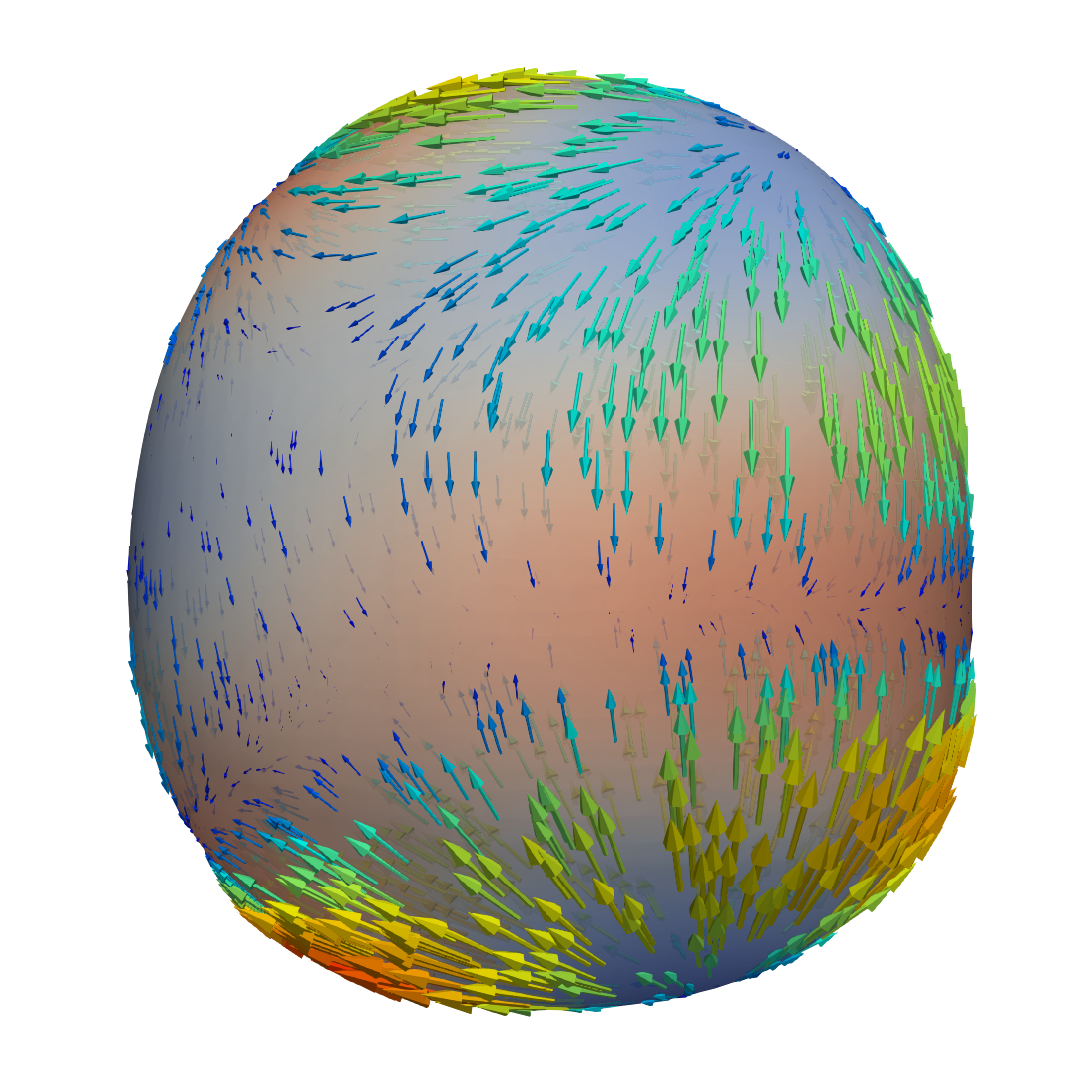}
	\end{subfigure}%
	\begin{subfigure}{.3\linewidth}
		\centering $t = 0.3$ \vskip 1mm
		\includegraphicsw{./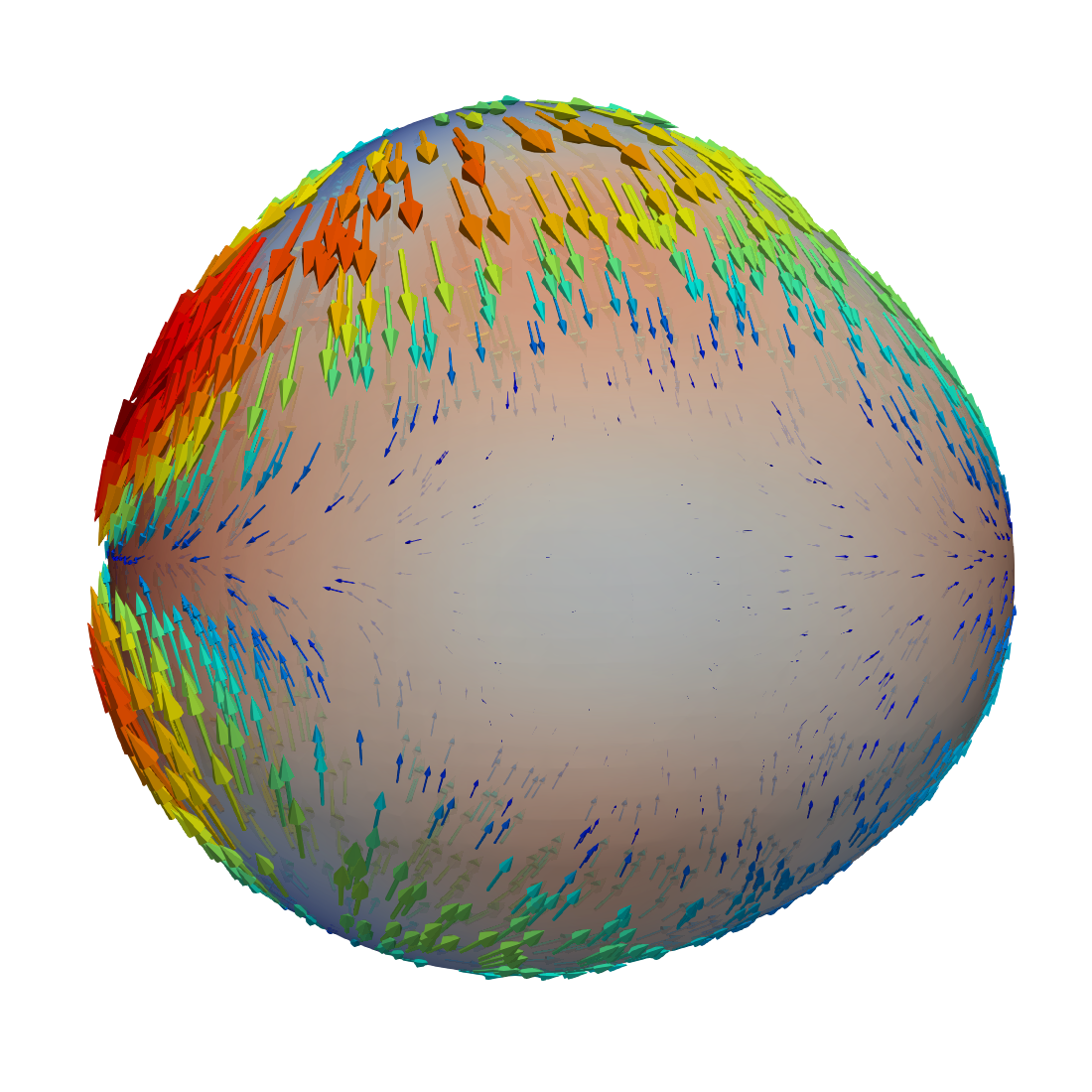}
	\end{subfigure}%
	\vskip 2mm
	\begin{subfigure}{.3\linewidth}
		\centering $t = 0.5$ \vskip 1mm
		\includegraphicsw{./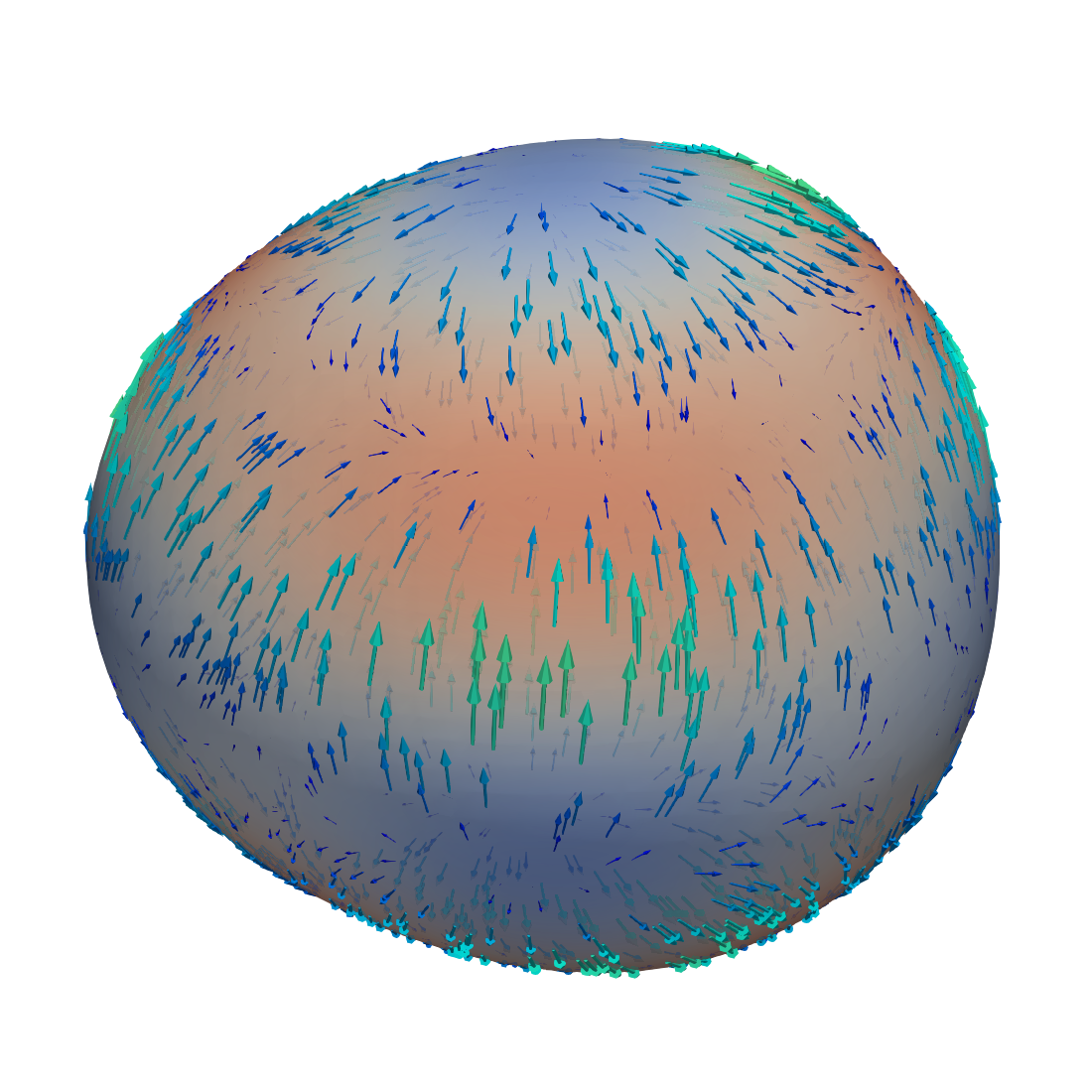}
	\end{subfigure}%
	\begin{subfigure}{.3\linewidth}
		\centering $t = 0.7$ \vskip 1mm
		\includegraphicsw{./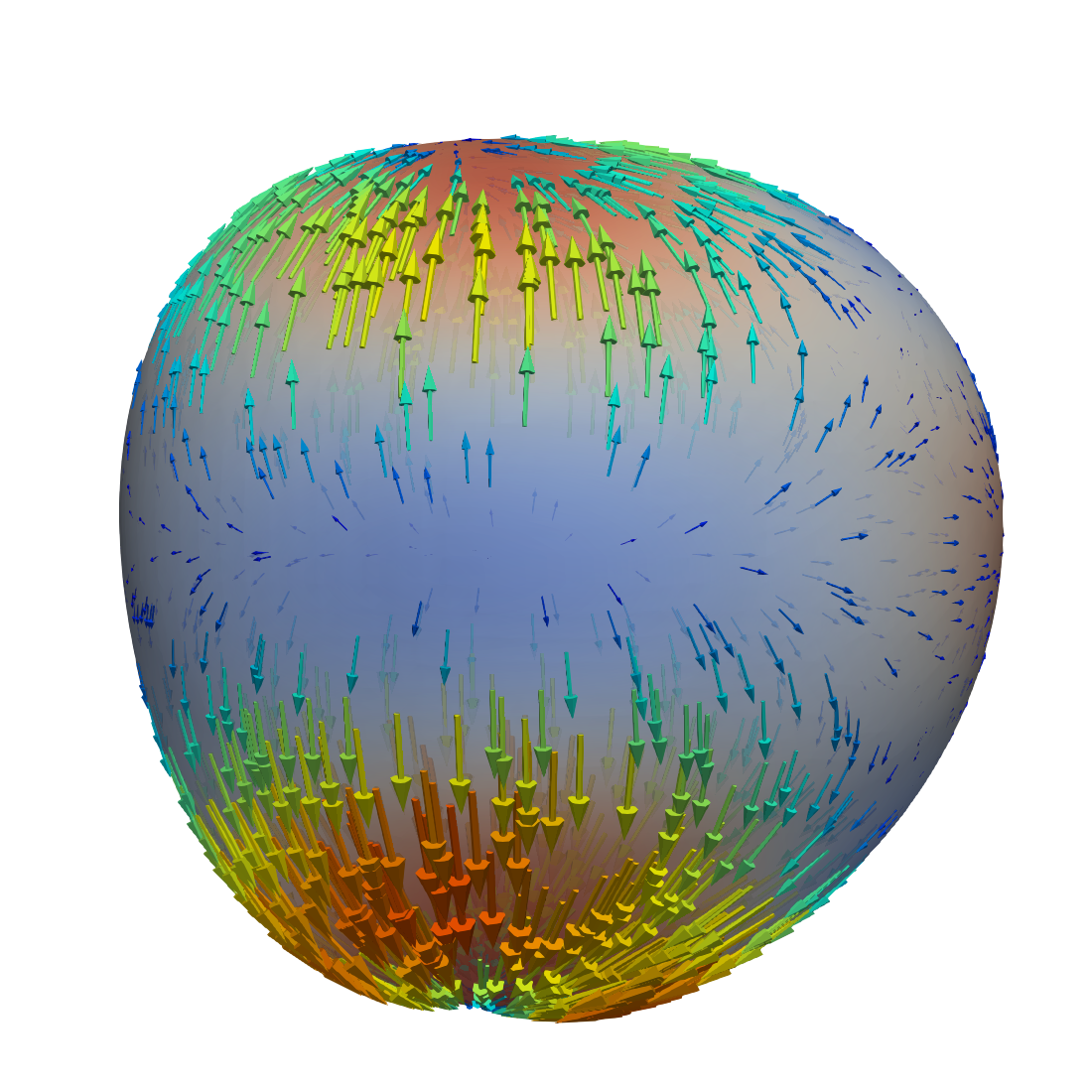}
	\end{subfigure}%
	\begin{subfigure}{.3\linewidth}
		\centering $t = 0.9$ \vskip 1mm
		\includegraphicsw{./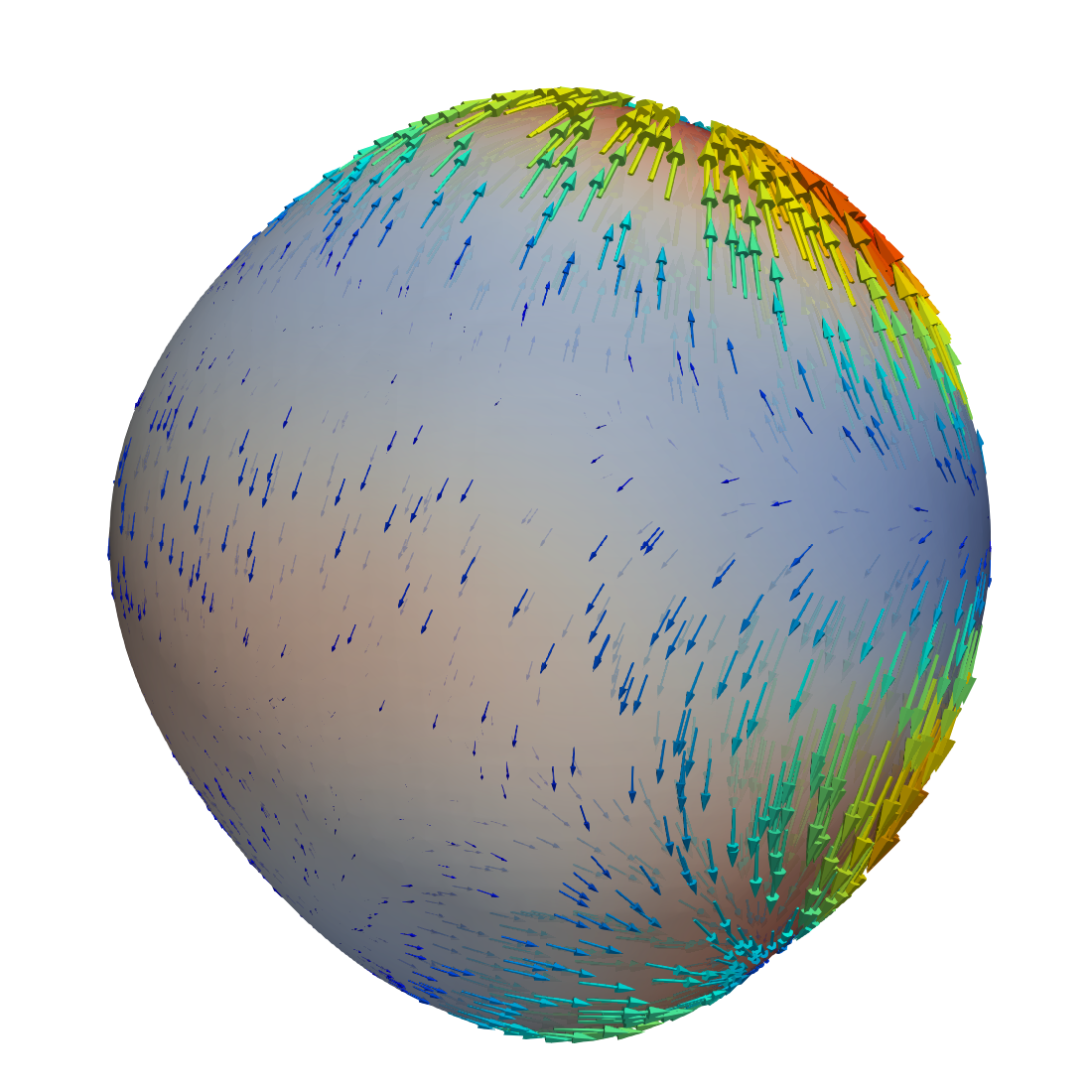}
	\end{subfigure}%
	\vskip 2mm
	\begin{subfigure}{.3\linewidth}
		\centering $t = 1$ \vskip 1mm
		\includegraphicsw{./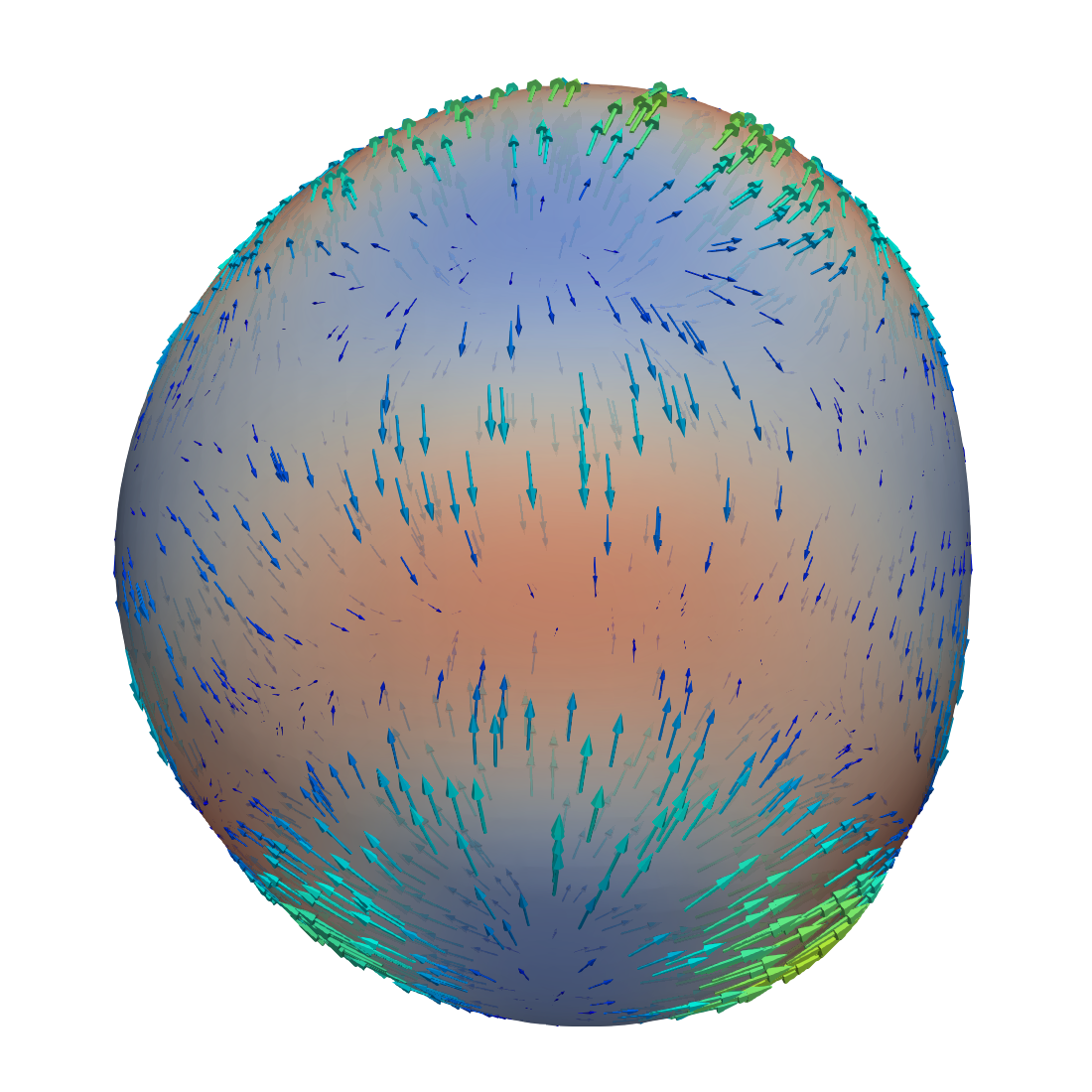}
	\end{subfigure}%
	\begin{subfigure}{.6\linewidth}
		\centering\includegraphicsw[.9]{./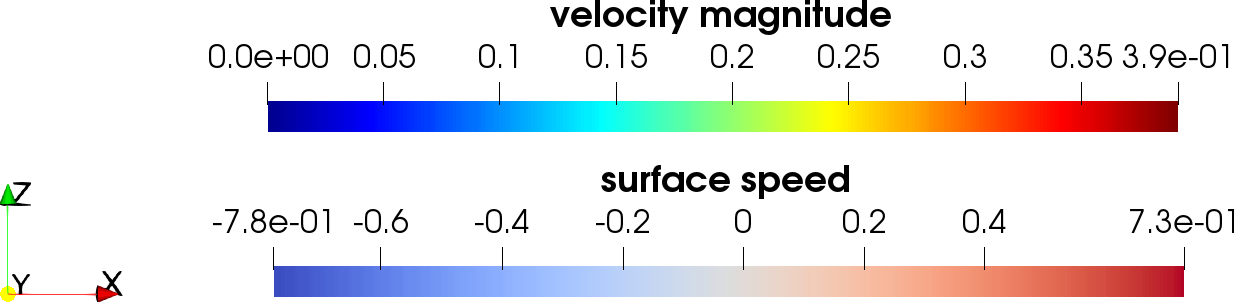}
	\end{subfigure}%
	\caption{Visualization of velocity field for asymmetric deformations of the sphere ; mesh level $\ell = 4$, $\Delta t = 0.01$. Click \href{https://youtu.be/89Dky1aaE_M}{here} to see the full animation. \label{fig:vel2}}
\end{figure}%

We repeat the experiment, but decrease the viscosity to $\mu=\frac12 10^{-5}$ and add two more spherical
harmonics, the sectorial  $\mathcal{H}_{3}^1$ harmonic and the tesseral $\mathcal{H}_{4}^{2}$ one, to make the deformation   \emph{a}symmetric. The radial displacement in this experiment is then given by
\[ \begin{split}
	\xi & = 0.2 \left( \tfrac12\cos(2\pi\,t)\,\mathcal{H}_{2}^{0}(\bx) + \tfrac1{\sqrt{10}}\sin(2\pi\,t)\,\mathcal{H}_{3}^{0}(\bx) \right), \\ 
	& + 0.1 \left( \tfrac12\cos(4\pi\,t)\,\mathcal{H}_{3}^{1}(\bx) + \tfrac5{18}\sin(4\pi\,t)\,\,\mathcal{H}_{4}^{2}(\bx) \right).
\end{split} \]
Again, the coefficients are such that $\frac{d}{dt} |\Gamma(t)|=0$ according to equation \eqref{RHSvar}.
The resulting velocity field  is visualized in Figure~\ref{fig:vel2}. The velocity pattern is still dominated by the sink-and-source flow. Note that in both cases there are no outer forces and the flow is completely ``geometry driven''.

\bibliographystyle{siam}
\bibliography{literatur}{}


\end{document}